\documentclass{amsart}

\newcommand{\apref}[3]{\hyperref[#2]{#1\ref*{#2}#3}}

% fuer Rank1

%\usepackage{mathbbol}
\usepackage{enumerate}
\usepackage[latin1]{inputenc}
\usepackage{dsfont}
\usepackage{amssymb,amsthm,amsmath}

\input{xy}
\xyoption{all}

\usepackage{mathrsfs}

\theoremstyle{plain}

\newtheorem{prop}{Proposition}[section]
\newtheorem{lemma}[prop]{Lemma}

\newtheorem{thm}[prop]{Theorem}

\newtheorem{cor}[prop]{Corollary}

\theoremstyle{definition}

\newtheorem{defi}[prop]{Definition}

\newtheorem{example}[prop]{Example}

\theoremstyle{remark}
\newtheorem{remark}[prop]{Remark}

\setlength{\parskip}{1ex}

\makeatletter
\@namedef{subjclassname@2020}{%
  \textup{2020} Mathematics Subject Classification}
\makeatother

% \DeclareMathOperator{\TO}{\mathcal L}

%BLZ

\newcommand{\TO}{\mc L}

\DeclareMathOperator{\base}{base}

\DeclareMathOperator{\ord}{ord}

% orbifolds

% algebraische Strukturen

%Standardmatrizen 

% Operatoren

\DeclareMathOperator{\tr}{tr}

\DeclareMathOperator{\Ima}{Im}
\DeclareMathOperator{\Rea}{Re}

\DeclareMathOperator{\pr}{pr}

% besondere Matrizen

\DeclareMathOperator{\I}{I}
%\newcommand{\I}{\mathds{1}}

% Lietheorie

% Wirkungen

\DeclareMathOperator{\Stab}{Stab}

% Masse

% symbolische Dynamik

\DeclareMathOperator{\abs}{abs}

\DeclareMathOperator{\Rel}{Rel}

\DeclareMathOperator{\CS}{CS}

\DeclareMathOperator{\Per}{Per}

% \newcommand{\st}{\text{st}}

% Buchstaben

\newcommand\N{\mathbb{N}}
\newcommand\Q{\mathbb{Q}}
\newcommand\R{\mathbb{R}}
\newcommand\Z{\mathbb{Z}}
\newcommand\C{\mathbb{C}}

\newcommand\T{\mathbb{T}}

\newcommand{\wh}{\widehat}

\newcommand{\eps}{\varepsilon}

% speziell fuer SL2R

% Koecher

% FT

%BLZ

% Sonstiges

\DeclareMathOperator{\id}{id}

\DeclareMathOperator{\Fct}{Fct}

\newcommand{\bmat}[4]{\begin{bmatrix} #1&#2\\#3&#4\end{bmatrix}}

%%%%%%%%%%%%%%%%%%%%%%%%%%%%%%%%%%%%%%%%%%%%%%%%%%%%%%%%%%%%%%
%%%%%%%%%%%%%%%%%%%%%%%%%%%%%%%%%%%%%%%%%%%%%%%%%%%%%%%%%%%%%%
%%%%%%%%%%%%%%%%%%%%%%%%%%%%%%%%%%%%%%%%%%%%%%%%%%%%%%%%%%%%%%
%Commands from Ksenia

%%%%%%%%%%%%%%%%%%%%%%%%%%%%%%%%%%%%%%%%%%%%%%%%%%%%%%%%%%%%%%

\usepackage[colorlinks,breaklinks]{hyperref}
\usepackage{mathtools}
\usepackage[notref,notcite,final]{showkeys}
\usepackage[nospace,noadjust]{cite}
\usepackage{enumitem}
\usepackage{verbatim}
\usepackage{colonequals}
\usepackage{stmaryrd}
\usepackage{amssymb}
\usepackage{xcolor} 
\usepackage{color,soul}
\usepackage[scr=boondoxo]{mathalfa}
\usepackage[backgroundcolor=pink!50!white,linecolor=white,shadow]{todonotes}
\usepackage{graphicx}

\theoremstyle{definition}

\theoremstyle{plain}
\newtheorem*{namedthm}{Theorem \namedthmname}
\newcounter{namedthm}

\makeatletter
\newenvironment{named}[1]
  {\def\namedthmname{#1}%
   \refstepcounter{namedthm}%
   \namedthm\def\@currentlabel{#1}}
  {\endnamedthm}
\makeatother

\usepackage{tikz}
\usetikzlibrary{%
	math,
	automata,
	matrix,
	arrows,
	positioning,
	quotes,
	bending%
	}
\usetikzlibrary{cd, shapes, patterns, shapes.geometric, shapes.symbols, 
shapes.arrows, shapes.multipart, shapes.callouts, shapes.misc, 
decorations.pathreplacing}
\usetikzlibrary{calc}
\tikzset{
    >=stealth',
    pil/.style={
           ->,
           thick,
           shorten <=2pt,
           shorten >=2pt,}
}

\newcounter{Fig}
\setcounter{Fig}{1}
\setul{0.5ex}{0.3ex}
\setulcolor{red}

\DeclareRobustCommand{\SkipTocEntry}[5]{}

\makeatletter
\providecommand*\xrightarrowtriangle[2][]{%
  \ext@arrow 0055{\arrowfill@\relbar\relbar\rightarrowtriangle}{#1}{#2}}
\makeatother

\setcounter{tocdepth}{1}

\global\long\def\rwund{\textcolor{red}{\wund}}

\begin{document}
%\global\long\def\defset#1#2{\left\{  #1\,\setmid\,#2\right\}  }
\global\long\def\defset#1#2{\left\{  #1\;\middle|\;#2\right\}  }
\global\long\def\dif#1{\operatorname{d}\!#1}
\global\long\def\i{\mathrm{i}}
\global\long\def\H{\mathbb{H}}
\global\long\def\R{\mathbb{R}}
\global\long\def\Z{\mathbb{Z}}
\global\long\def\Q{\mathbb{Q}}
\global\long\def\C{\mathbb{C}}
\global\long\def\N{\mathbb{N}}
\global\long\def\PSLR{\mathrm{PSL}_2(\R)}
\global\long\def\x{\mathrm{x}}
\global\long\def\hyp{\mathrm{h}}
\global\long\def\id{\operatorname{id}}
\global\long\def\tr#1{\operatorname{tr}(#1)}
\global\long\def\TO#1{\mathcal{L}_{#1}}
\global\long\def\fTO#1{\widetilde{\mathcal{L}}_{#1}}
\global\long\def\Return{\mathscr{R}}
\global\long\def\act{\boldsymbol{.}}
\global\long\def\CrSc{\widehat{\operatorname{C}}}
\global\long\def\BrU{\mathrm{C}}
\global\long\def\BrS{\mathcal{C}}
\global\long\def\Cs#1{\operatorname{C}_{#1}}
\global\long\def\Csred#1{\operatorname{C}_{#1,\mathrm{red}}}
\global\long\def\Csacc#1{\operatorname{C}_{#1,\mathrm{acc}}}
\global\long\def\CS#1{\widehat{\operatorname{C}_{#1}}}
\global\long\def\Iset#1{I_{#1,\st}}
\global\long\def\Jset#1{J_{#1,\st}}
\global\long\def\Plussp#1{\mathrm{H}_+(#1)}
\global\long\def\Minussp#1{\mathrm{H}_-(#1)}
\global\long\def\PMsp#1{\mathrm{H}_\pm(#1)}
\global\long\def\MPsp#1{\mathrm{H}_\mp(#1)}
\global\long\def\Iacc#1{I_{#1,\mathrm{acc}}}
\global\long\def\Jacc#1{J_{#1,\mathrm{acc}}}
\global\long\def\base#1{\operatorname{bp}(#1)}
\global\long\def\fund{\mathcal{F}}
\global\long\def\Iso#1{\mathrm{ISO}(#1)}
\global\long\def\iso#1{\mathrm{I}(#1)}
\global\long\def\Rel{\mathrm{REL}}
\global\long\def\intiso#1{\mathrm{int\,}\iso{#1}}
\global\long\def\extiso#1{\mathrm{ext\,}\iso{#1}}
\global\long\def\mittelp#1{\operatorname{c}(#1)}
\global\long\def\radius#1{\operatorname{r}(#1)}
\global\long\def\summit#1{\operatorname{s}(#1)}
\global\long\def\UTB#1{\operatorname{S}\!#1}
\global\long\def\quod#1#2{#1\diagdown#2}
\global\long\def\Orbi{\mathbb{X}}
\global\long\def\GeoFlow{\widehat{\Phi}}
\global\long\def\cyc#1#2{\operatorname{cyc}_{#1}(#2)}
\global\long\def\cycstar#1#2{\operatorname{cyc}^*_{#1}(#2)}
\global\long\def\cycset#1#2{\operatorname{Cyc}_{#1,#2}}
\global\long\def\cycstarset#1#2{\operatorname{Cyc}^*_{#1,#2}}
\global\long\def\cycnext#1#2{g_{#2}(#1)}
\global\long\def\cycstarnext#1#2{g^*_{#2}(#1)}
\global\long\def\cyctrans#1#2{u_{#1,#2}}
\global\long\def\point#1#2{#2_{#1}}
\global\long\def\Per{\operatorname{Per}}
\global\long\def\Sub{\operatorname{Sub}}
\global\long\def\st{\operatorname{st}}
\global\long\def\T#1{\mathscr{T}(#1)}
\global\long\def\abs#1{\left|#1\right|}
\global\long\def\geo{\mathscr{g}}
\global\long\def\Geo{\mathscr{G}}
\global\long\def\Ball#1#2{\mathrm{B}_{#1}(#2)}
\global\long\def\Vanish{\mathrm{Van}}
\global\long\def\retime#1{t^+_{#1}}
\global\long\def\pretime#1{t^-_{#1}}
\global\long\def\ittime#1{\mathrm{t}_{#1}}
\global\long\def\ittrans#1{\mathrm{g}_{#1}}
\global\long\def\itindex#1{\mathrm{k}_{#1}}
\global\long\def\Trans#1#2#3{\mathcal{G}_{#1}(#2,#3)}
\global\long\def\Transprime#1#2#3{\mathcal{G}'_{#1}(#2,#3)}
\global\long\def\Past#1#2#3{\mathcal{V}_{#1}(#2,#3)}
\global\long\def\Heir#1#2{H_{#1}(#2)}
\global\long\def\acc{\mathrm{acc}}
%\global\long\def\Remem#1#2{Q_{#1}(#2)}
\global\long\def\eX{\mathrm{X}}
\global\long\def\eY{\mathrm{Y}}
\global\long\def\eR{\mathrm{R}}
\global\long\def\eZ{\mathrm{Z}}
\global\long\def\Removec#1#2{L_{#1}(#2)}
\global\long\def\Index{\widetilde{A}}
\global\long\def\gbound#1{\geo#1}
\global\long\def\ReturnGraph#1{\mathrm{RG}_{#1}}
\global\long\def\Cir#1{\mathrm{Cir}_{#1}}
\global\long\def\ram#1{\operatorname{ram}(#1)}
\global\long\def\Ram#1{\mathrm{Ram}_{#1}}
\global\long\def\fixp#1#2{\operatorname{f}_{#1}(#2)}
\global\long\def\Stab#1#2{\operatorname{Stab}_{#1}(#2)}
\global\long\def\standh#1{\mathrm{h}_{#1}}
\global\long\def\countit#1{\varphi(#1)}
\global\long\def\Unit{\operatorname{U}}
\global\long\def\ct{\operatorname{ct}}
\global\long\def\Att#1#2{\operatorname{Att}_{#1}(#2)}
\global\long\def\nbAtt#1{\operatorname{Att}_{#1}}
\global\long\def\IAtt#1#2{I(\Att{#1}{#2})}
\global\long\def\conv#1#2{\operatorname{conv}_{#1}(#2)}
\global\long\def\slow{\mathrm{slow}}
\global\long\def\fast{\mathrm{fast}}
\global\long\def\Fct{\operatorname{Fct}}
\global\long\def\edge#1{\xrightarrowtriangle{#1}}

\global\long\def\too{\longrightarrow}
\global\long\def\mapstoo{\longmapsto}
\global\long\def\iff{\Longleftrightarrow}
\global\long\def\kund{\mathcal{K}}
\global\long\def\wund{\mathcal{W}}
\global\long\def\REL#1{\mathrm{REL}(#1)}
\global\long\def\RELL{\mathrm{REL}}
\global\long\def\P{\mathrm{P}}
\global\long\def\Plusspp#1{\mathrm{H}_+^{\P}(#1)}
\global\long\def\Minusspp#1{\mathrm{H}_-^{\P}(#1)}
\global\long\def\pr{\operatorname{pr}_{\infty}}
\global\long\def\prim{\operatorname{Prim}}
\global\long\def\underiso#1{\mathscr{W}(#1)}
\global\long\def\neindex{A'}
\global\long\def\J{\mathrm{J}}
\global\long\def\summit#1{\operatorname{s}(#1)}

\title[Strict Transfer Operators for Non-Compact Orbisurfaces]{Strict Transfer Operator Approaches for Non-Compact Hyperbolic Orbisurfaces}
\author[P.\@ Wabnitz]{Paul Wabnitz}
\address{Paul Wabnitz, University of Bremen, Department~3 - Mathematics, Institute for Dynamical Systems, Bibliothekstr.~5, 28359~Bremen, Germany}
\email{pwabnitz@uni-bremen.de}
\subjclass[2020]{Primary: 11M36, 37C30; Secondary: 37D35, 37D40, 58J51}
\keywords{Selberg zeta function, strict transfer operator approach, cuspidal acceleration, transfer operator, Fredholm determinant, symbolic dynamics, cross 
section, geodesic flow}

\begin{abstract}
By building on former results in~\cite{Pohl_Wab} and the cusp expansion algorithm developed by Pohl~\cite{Pohl_diss}, we construct strict transfer operator approaches in the sense of Fedosova and Pohl~\cite{FP_NECM} for geometrically finite developable hyperbolic orbisurfaces of infinite area without cusps. Together with the cusp expansion algorithm for orbisurfaces with cusps, this provides strict transfer operator approaches for all hyperbolic orbifolds fulfilling mild assumptions.
For every such orbisurface we obtain explicit transfer operator families for which, by virtue of~\cite{FP_NECM}, the Fredholm determinant function is seen to be identical to the associated Selberg zeta function.
\end{abstract}

\maketitle
\tableofcontents

%==================================================================
\section*{Introduction}\label{SEC:intro}
%==================================================================

Consider a geometrically finite non-cocompact Fuchsian group~$\Gamma$ (possibly including elliptic elements) acting on the hyperbolic plane~$\H$.
In~\cite{Pohl_Wab} we established the concept of \emph{sets of branches} for the geodesic flow on the orbit space~$\Orbi=\quod{\Gamma}{\H}$.
These sets of branches are, in short, collections of well-chosen subsets of the unit tangent bundle~$\UTB\H$ whose union represents a cross section\footnote{Our notion of cross sections differs from the usual notion of Poincar\'e cross sections. We refer to~\cite[Section~2]{Pohl_Wab} for the details.} for the geodesic flow on said orbit space (a comprehensive definition is provided in Section~\ref{SEC:notdef} below).
The main result of~\cite{Pohl_Wab} showed that cross sections obtained in this way provide all the structure necessary for the implementation of a \emph{strict transfer operator approach} as developed by Fedosova and Pohl in~\cite{FP_NECM}.
This approach yields a symbolic dynamics semi-conjugate to the first return map for the discretization of the geodesic flow induced by that cross section.
The one-parameter family~$\{\fTO{s}\}_{s\in\C}$ of transfer operators arising from this dynamics is, by virtue of~\cite[Theorem~4.2]{FP_NECM}, sufficiently well-behaved (namely nuclear of order~$0$ on a certain Banach space of holomorphic functions) for their Fredholm determinants to exist and constitute a representation of the Selberg zeta function on~$\Orbi$.
This, together with the meromorphic continuability of the parameterization~$s\mapsto\fTO{s}$ as well as of the zeta function itself, makes it possible to effectively study resonances and resonant states of the Laplace--Beltrami operator on~$\Orbi$ in terms of~$1$-eigenspaces of those transfer operators.
We refer the reader to~\cite[Sections~4.2--4.4]{FP_NECM} (or alternatively to~\cite[Sections~3.7--3.8]{Pohl_Wab}) for complete definitions and purposes of the strict transfer operator approach, the \emph{structure tuple} it requires, and the fast transfer operator family arising from it.

In this paper we provide explicit sets of branches for every non-compact developable hyperbolic orbisurfaces fulfilling a mild technical condition (see Condition~\eqref{condA} below).
This finishes the proof of the following result, thereby justifying the efforts undertaken in~\cite{Pohl_Wab}:

\begin{named}{A}\label{THMA}
Let~$\Gamma$ be an admissible group of isometries of the hyperbolic plane.
Then~$\Gamma$ admits a strict transfer operator approach with structure tuple given explicitly.
\end{named}

Here, by an \emph{admissible} group of isometries we mean a geometrically finite non-cocompact Fuchsian group with hyperbolic elements that fulfills Condition~\eqref{condA}.

By virtue of previous work by various researches, Theorem~\ref{THMA} is known to hold in many situations.
We refer the reader to the introduction of~\cite{Pohl_Wab} for an extensive list of references.
Of utmost significance for our approach is the so called \emph{cusp expansion algorithm} developed by Pohl in~\cite{Pohl_diss}, which we review in Section~\ref{SEC:cuspexp} and which is known to give rise to sets of branches for admissible groups engendering cusps in their orbit spaces.
In Section~\ref{SEC:cuspless} we then consider admissible groups of infinite covolume without parabolic elements, for which, so far and in this generality, no systematic approach is known.
We provide one by an auxiliary group argument that allows us to apply the cusp expansion algorithm ``out of context''.
More precisely, we construct an auxiliary Fuchsian group with exactly one conjugacy class of parabolic elements, to which we can apply the cusp expansion algorithm.
The set of branches we obtain from that is then seen to induce a set of branches for the initial group as well.
In both cases, all constructions are completely algorithmic and therefore provide explicit transfer operator families for every admissible group of isometries of the hyperbolic plane.

Sections~\ref{SEC:notdef} and~\ref{SEC:isoms} are of preliminary nature and provide the necessary definitions as well as technical observations which we require for our proofs.

%==================================================================
\newpage
\addtocontents{toc}{\SkipTocEntry}
\subsection*{Acknowledgement} 
This research was funded by the Deutsche Forschungsgemeinschaft (DFG, German 
Research Foundation) -- project no.~264148330 and PO~1483/2-1. The author 
wishes to thank the Hausdorff Institute for Mathematics in Bonn for excellent working conditions during the HIM trimester program ``Dynamics: Topology and Numbers,'' where part of this manuscript was prepared.
%==================================================================
\vspace{3em}

%==================================================================
\section{Notations and definitions}\label{SEC:notdef}
%==================================================================

%----------------------------------------------------------------------------------------------------------------------------
\subsection*{Isometries of the hyperbolic plane}\label{SUBSEC:PrelimIsom}
%----------------------------------------------------------------------------------------------------------------------------

Throughout this note we make exclusive use of the upper half-plane model
\[
\H=\defset{z\in\C}{\Ima{z}>0}\,,\qquad\dif{s}^2=(\Ima{z})^{-2}\dif{z}\dif{\overline{z}}\,.
\]
The geodesic boundary~$\partial_{\geo}\H$ of~$\H$ can be identified with the Alexandroff extension~$\widehat\R\coloneqq\R\cup\{\infty\}$ of the real line equipped with Euclidean topology.
For a subset~$K$ of~$\overline{\H}^{\geo}\coloneqq\H\cup\partial_{\geo}\H$ we write~$\partial K$ for its boundary in~$\H$,~$\partial_{\geo}K$ for its boundary in~$\overline{\H}^{\geo}$, and~$\geo K$ for its geodesic boundary.
Its closure in~$\H$ is denoted by~$\overline{K}$ and its closure in~$\overline{\H}^{\geo}$ by~$\overline{K}^{\geo}$.

The group of orientation preserving isometries on~$\H$ is isomorphic to
\[
\PSLR=\mathrm{SL}_2(\R)\slash\{\pm\id\}\,,
\]
whose elements we consider to act (from the left) on~$\H$ by linear fractional transformations. These transformation extend smoothly to~$\overline{\H}^{\geo}$. We write~$g\act z$ for the action of~$g\in\PSLR$ on~$z\in\overline{\H}^{\geo}$.

A \emph{Fuchsian group}~$\Gamma$ is a discrete subgroup of~$\PSLR$. We denote its orbit space by
\[
\Orbi=\quod{\Gamma}{\H}
\]
and the associated canonical quotient map by
\[
\pi\colon\H\too\Orbi\,.
\]
Since~$\Gamma$ acts properly discontinuously on~$\H$, its orbit space inherits via~$\pi$ the structure of a two-dimensional developable Riemannian orbifold, called a developable orbisurface.

A Fuchsian group is called \emph{geometrically finite} if it has a fundamental domain that is a geometrically finite, exact, convex polyhedron (see, e.g., \cite{Ratcliffe}).
Given the presence of periodic geodesics (see below), geometric finiteness allows for only two types of hyperbolic ends of~$\Orbi$, namely cusps and funnels, and~$\Orbi$ possesses not more than finitely many of each.
Further,~$\Orbi$ might have conical singularities, which is the case if and only if~$\Gamma$ contains elliptic elements.
A geometrically finite Fuchsian group~$\Gamma$ is called \emph{cocompact} if~$\Orbi$ is compact.
We call~$\Gamma$ \emph{non-cocompact}, if this is not the case, meaning that~$\Orbi$ has hyperbolic ends.
We emphasize that this should not be confused with convex cocompactness, which demands the convex core of~$\Orbi$ to be compact and~$\Orbi$ itself to be a proper surface, i.e., void of conical singularities.
Hence, a Fuchsian group can be non-cocompact and convex cocompact at the same time.
We avoid impending confusion in that area by instead speaking of (developable) orbisurfaces with hyperbolic ends.

We denote the limit set of~$\Gamma$ by~$\Lambda(\Gamma)$ and the ordinary set by~$\Omega(\Gamma)$.
The limit set can be finite, in which case~$\Gamma$ is called~\emph{elementary}.
We refer to~\cite{Katok_fuchsian} for a complete characterization of elementary Fuchsian groups.
If~$\Lambda(\Gamma)$ contains more than two distinct points, then it contains infinitely many and is a perfect set in~$\wh\R$.

Define
\[
\widehat\R_{\st}\coloneqq\Lambda(\Gamma)\setminus\bigcup\{\text{parabolic fixed points of }\Gamma\}\,.
\]
Then~$\widehat\R_{\st}$ contains, in particular, all hyperbolic fixed points of~$\Gamma$, and~$\Gamma$ is cocompact if and only if~$\widehat\R=\widehat\R_{\st}$.
For every subset~$M$ of~$\widehat\R$ we set
\[
M_{\st}\coloneqq M\cap\widehat\R_{\st}\,.
\]

%----------------------------------------------------------------------------------------------------------------------------
\subsection*{Geodesics}\label{SUBSEC:PrelimGeod}
%----------------------------------------------------------------------------------------------------------------------------

Let~$\gamma\in\mathcal{C}^\infty(\R;\H)$.
Then~$\gamma$ is a geodesic on~$\H$ if and only if~$\gamma$ is injective and~$\gamma(\R)$ is a (generalized) semicircle perpendicular to~$\partial_{\geo}\H$.
If this is the case then~$\gamma(\R)$ is called a \emph{complete geodesic segment} and we write
\[
(\gamma(-\infty),\gamma(+\infty))_{\H}\coloneqq\gamma(\R)\,.
\]
Likewise, for~$z_1,z_2\in\overline{\H}^{\geo}$ we denote the geodesic arc joining~$z_1$ to~$z_2$ by~$(z_1,z_2)_{\H}$ and call it a \emph{geodesic segment}.
Hence, the geodesic segment~$(z_1,z_2)_{\H}$ is complete if and only if~$\{z_1,z_2\}\subseteq\partial_{\geo}\H$. 
We consider all geodesics to be of unit speed, i.e.,~$\abs{\gamma'(t)}=1$ for all~$t\in\R$.
In order to prevent confusion with intervals in~$\R$ we will denote those by~$(z_1,z_2)_\R$.
The notation~$(z_1,z_2)$ is thus reserved exclusively for tuples.
We utilize the same notation for closed and half-open intervals, or for geodesic segments containing either one or both of their respective endpoints (including those with one or both endpoints in~$\partial_\geo\H$).

A complete geodesic segment is called \emph{vertical} if either one of its endpoints equals~$\infty$, and \emph{non-vertical} otherwise.
A geodesic segment is called \emph{vertical} if it is contained in a vertical complete geodesic segment, and \emph{non-vertical} if this is not the case.

For every complete geodesic segment the associated parameterization can be chosen to be of unit speed and we will always assume that this is the case.
Two (unit speed) geodesics~$\gamma_1$ and~$\gamma_2$ are considered equivalent if
\[
(\gamma_1(-\infty),\gamma_1(+\infty))=(\gamma_2(-\infty),\gamma_2(+\infty))\,,
\]
and we denote the set of equivalence classes of geodesics on~$\H$ by~$\Geo(\H)$.

Let
\[
\pr\colon\H\too\partial_{\geo}\H
\]
be the geodesic projection from~$\infty$ onto the boundary of~$\H$.
Note that, in contrast to~\cite{Pohl_diss}, we do not extend this projection to~$\overline{\H}^{\geo}$ here, meaning that for~$x\in\R$ we get
\[
\pr^{-1}(x)=(x,\infty)_{\H}\subseteq\H\,.
\]

Let~$\Gamma$ be a geometrically finite Fuchsian group with orbit space~$\Orbi$.
The geodesics on~$\Orbi$ can be identified with the concatenations
\[
\widehat{\gamma}\coloneqq\pi\circ\gamma
\]
for~$\gamma\in\Geo(\H$).
We denote by~$\Geo(\Orbi)$ the set of all (equivalence classes of) geodesics on~$\Orbi$ and by~$\Geo_{\Per}(\Orbi)$ the subset of all (equivalence classes of) periodic geodesics on~$\Orbi$.
For the sake of abbreviation we also write
\[
\Geo_{\Per,\Gamma}(\H)\coloneqq\defset{\gamma\in\Geo(\H)}{\pi(\gamma)\in\Geo_{\Per}(\Orbi)}\,.
\]

For~$g\in\Gamma$ elliptic or parabolic we denote its unique fixed point in~$\overline{\H}^\geo$ by~$\fixp{}{g}$.
If~$g\in\Gamma$ is hyperbolic then its attracting fixed point is denoted by~$\fixp{+}{g}$ and its repelling one by~$\fixp{-}{g}$.
The set
\begin{equation}\label{EQNDEF:EX}
\begin{aligned}
E(\Orbi)\coloneqq&\defset{(\gamma(+\infty),\gamma(-\infty))}{\gamma\in\Geo_{\Per,\Gamma}(\H)}\\
=&\defset{(\fixp{+}{h},\fixp{-}{h})}{h\in\Gamma\text{ hyperbolic}}
\end{aligned}
\end{equation}
will play a central role for our approach.
Recall that~$E(\Orbi)$ lies dense in~$\Lambda(\Gamma)\times\Lambda(\Gamma)$ whenever the Fuchsian group~$\Gamma$ contains hyperbolic elements (see~\cite{Eberlein_curvedMflds}).
The equivalence class~$\alpha(h)$ of geodesics joining~$\fixp{-}{h}$ to~$\fixp{+}{h}$ is called the~\emph{(hyperbolic) axis} of~$h$, and the equality of the two sets in~\eqref{EQNDEF:EX} is due to the bijection between conjugacy classes of hyperbolic elements in~$\Gamma$ and periodic geodesics on~$\Orbi$~\cite[Theorem~3.30]{Eberlein_curvedMflds}.

We write~$\UTB\H$ for the unit tangent bundle of~$\H$.
Each~$\nu=(z,\vec{\nu})\in\UTB\H$ uniquely determines a geodesic~$\gamma_\nu\in\Geo(\H)$ via the rule
\[
\gamma_\nu(0)=z\,,\qquad\gamma_\nu^{\prime}(0)=\vec{\nu}\,.
\]
We further denote by
\[
\mathrm{bp}\colon\UTB\H\ni(z,\vec{\nu})\mapstoo z\in\H
\]
the projection onto base points.

%----------------------------------------------------------------------------------------------------------------------------
\subsection*{Sets of branches}\label{SUBSEC:PrelimSoB}
%----------------------------------------------------------------------------------------------------------------------------

We now recall the notion of a set of branches from~\cite{Pohl_Wab}, which will be the central object for the considerations that follow.
A detailed discussion of the defining properties and their consequences can be found ibid.
We assume~$\Gamma$ to be a geometrically finite, non-cocompact Fuchsian group containing hyperbolic elements and denote its orbit space by~$\Orbi$.

\begin{defi}\label{DEF:setofbranches}
Let~$N\in\N$ and let~$\Cs1,\ldots, \Cs N$ be subsets of~$\UTB\H$. Set~$A\coloneqq\{1,\dots,N\}$, 
\[
\BrS \coloneqq \defset{\Cs j }{ j\in A }\,\qquad\text{and}\qquad\BrU \coloneqq \bigcup_{j\in A} \Cs j.
\]
We call $\BrS$ a \emph{set of branches for the geodesic flow 
on}~$\Orbi$ if it satisfies the following properties:

\begin{enumerate}[label=$\mathrm{(B\arabic*)}$, ref=$\mathrm{B\arabic*}$, itemsep=2ex]
\item\label{BP:closedgeodesicsHtoX}
For each~$j\in A$ there exists~$\nu\in\Cs{j}$ such that $\pi(\gamma_\nu)$ is a 
periodic geodesic on~$\Orbi$.
\item\label{BP:completegeodesics}
For each~$j\in A$, the set~$\overline{\base{\Cs{j}}}$ is a complete geodesic 
segment in~$\H$ and its endpoints are in~$\widehat\R\setminus\widehat\R_{\st}$. In particular, for each $j\in A$, the 
set~$\H\setminus\overline{\base{\Cs{j}}}$ decomposes uniquely into two (geodesically) convex open half-spaces.
\item\label{BP:pointintohalfspaces}
For each~$j\in A$, all elements of~$\Cs{j}$ point into the same open half-space 
relative to~$\overline{\base{\Cs{j}}}$. We denote this half-space by~$\Plussp{j}$ and set
\[
\Minussp{j}\coloneqq\H\setminus\bigl(\overline{\base{\Cs{j}}}\cup\Plussp{j}\bigr)\,.
\]
Further, we denote by~$I_j$ the largest open subset of~$\wh\R$ that is contained in~$\geo\Plussp{j}$, and by~$J_j$ the largest open subset of~$\wh\R$ contained in~$\geo\Minussp{j}$.
\item\label{BP:coverlimitset}
The~$\Gamma$-translates of~$\defset{I_j}{j\in A}$ cover the set~$\wh\R_{\st}$, i.e.,
\[
\wh\R_{\st}\subseteq\bigcup_{j\in A}\bigcup_{g\in\Gamma}g\act I_j\,.
\]
\item\label{BP:allvectors}
For each $j\in A$ and each pair~$(x,y)\in \Iset{j}\times\Jset{j}$ there exists a (unique) vector~$\nu\in\Cs{j}$ such that
\[
(x,y)=(\gamma_\nu(+\infty),\gamma_\nu(-\infty))\,.
\]
\item\label{BP:disjointunion} If $\overline{\base{\Cs{j}}}\cap 
g\act\overline{\base{\Cs{k}}}\ne\varnothing$ for some~$j,k\in A$ and~$g\in\Gamma$, then either $j=k$ and $g=\id$, or $\PMsp{j} = g\act\MPsp{k}$.
\item\label{BP:intervaldecomp}  For each pair~$(a,b)\in A\times A$ there exists a (possibly empty) subset~$\Trans{}{a}{b}$ of~$\Gamma$ such that
\begin{enumerate}[label=$\mathrm{(\alph*)}$, ref=$\mathrm{\alph*}$]

\item\label{BP:intervaldecompGdecomp}
 for all~$j\in A$ we have 
\begin{align*}
\bigcup_{k\in A}\bigcup_{g\in\Trans{}{j}{k}}g\act I_{k}&\subseteq 
I_j
\intertext{and}
\bigcup_{k\in 
A}\bigcup_{g\in\Trans{}{j}{k}}g\act\Iset{k}&=\Iset{j}\,,
\end{align*}
and these unions are disjoint,

\item\label{BP:intervaldecompGgeod}
for each pair~$(j,k)\in A\times A$, each $g\in\Trans{}{j}{k}$ and each pair of points~$(z,w)\in\base{\Cs{j}}\times g\act\base{\Cs{k}}$, the geodesic segment~
$(z,w)_{\H}$ is non-empty, is contained in~$\Plussp{j}$ and does not intersect
$\Gamma\act\BrU$,

\item\label{BP:intervaldecompback}
for all~$j\in A$ we have 
\[
 \Jset{j}\subseteq\bigcup_{k\in A}\bigcup_{h\in\Trans{}{k}{j}}h^{-1}\act\Jset{k}\,.
\]
\end{enumerate}
\end{enumerate}

We call the sets~$\Cs{j}$, $j\in A$, the \emph{branches} of~$\BrS$, and $\BrU$ the \emph{branch union}. Further, we call the sets~$\Trans{}{j}{k}$, $j,k\in A$, the \emph{(forward) transition sets} of~$\BrS$, with $\Trans{}{j}{k}$ being the \emph{(forward) transition set from~$\Cs{j}$ to~$\Cs{k}$}.
\end{defi}

\begin{remark}\label{REM:noadmandnoncollap}
By comparing Definition~\ref{DEF:setofbranches} to~\cite[Definition~3.1]{Pohl_Wab} one notices that the latter one includes two additional notions, namely those of \emph{admissible} and \emph{non-collapsing} sets of branches.
Since, for the duration of this paper, we are only interested in the mere existence of sets of branches for a given Fuchsian group, we do not have to take those notions into account here..
This is due to~\cite[Proposition~5.9]{Pohl_Wab} where it has been shown that every set of branches without those properties can be transformed into one that does have them.
\end{remark}

We further recall the following consequential property.

\begin{lemma}[Proposition~3.7 in \cite{Pohl_Wab}]\label{LEM:allperiodintersect}
Every~$\gamma\in\Geo_{\Per,\Gamma}(\H)$ intersects~$\Gamma\act\BrU$.
\end{lemma}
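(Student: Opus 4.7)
My plan is to exhibit a tangent vector of $\gamma$ inside $\Gamma\act\BrU$ by reducing, through property~(B5), to the following covering statement: it suffices to find $g\in\Gamma$ and $j\in A$ such that $g^{-1}\act(\gamma(+\infty),\gamma(-\infty))\in\Iset{j}\times\Jset{j}$. Once such a pair is produced,~(B5) yields a unique $\nu\in\Cs{j}$ with $\gamma_\nu$ and $g^{-1}\act\gamma$ sharing endpoints, so that $g\act\nu\in\Gamma\act\BrU$ is tangent to~$\gamma$. Geometrically, this amounts to $\gamma$ crossing the geodesic line $\overline{\base{\Cs{j}}}$, viewed as a $\Gamma$-translate of a branch line, from $\Minussp{j}$ into $\Plussp{j}$.

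The setup is immediate. Since $\gamma\in\Geo_{\Per,\Gamma}(\H)$, the second description of $E(\Orbi)$ in~\eqref{EQNDEF:EX} furnishes a hyperbolic $h\in\Gamma$ with $\gamma(\pm\infty)=\fixp{\pm}{h}$; in particular both endpoints lie in~$\wh\R_\st$. Property~(B4) supplies $g_0\in\Gamma$ and $j_0\in A$ with $g_0^{-1}\act\gamma(+\infty)\in\Iset{j_0}$, and after replacing $\gamma$ by $g_0^{-1}\act\gamma$—still periodic, now with stabilizer conjugate to $\langle h\rangle$—we may assume $\gamma(+\infty)\in\Iset{j_0}$ outright. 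If already $\gamma(-\infty)\in\Jset{j_0}$, the argument terminates.

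Otherwise, I would run the forward Markov shift on $\Iset{j_0}$ provided by~(B7a): $\gamma(+\infty)$ lies in a unique cell $g_1\act\Iset{j_1}$ with $g_1\in\Trans{}{j_0}{j_1}$, and I pass to $\gamma_1\coloneqq g_1^{-1}\act\gamma$. Iterating produces sequences $(j_n)_{n}\subset A$ and $(g_n)_n\subset\Gamma$ with $g_n\in\Trans{}{j_{n-1}}{j_n}$ and $\gamma_n(+\infty)\in\Iset{j_n}$, where $\gamma_n=(g_1\cdots g_n)^{-1}\act\gamma$ is again periodic. Property~(B7b) guarantees that the geodesic arcs joining consecutive branch base points remain in $\Plussp{\cdot}\setminus\Gamma\act\BrU$, so the iteration does not skip over any tangent vector of $\Gamma\act\BrU$ already present on~$\gamma$; at each stage we check, via the backward Markov covering~(B7c), whether $\gamma_n(-\infty)$ now lies in~$\Jset{j_n}$.

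The main obstacle is termination, i.e.\ showing that for some finite~$n$ one has $\gamma_n(-\infty)\in\Jset{j_n}$. My approach is to exploit the periodicity of $\gamma$ in an essential way: because $A$ is finite and the shift is deterministic, the itinerary $(j_n)$ is eventually periodic, and the forward endpoints accumulate towards $\gamma(+\infty)$ through a nested family of Markov cells. If termination failed, $\gamma$ would remain disjoint from every $\Gamma$-translate of every branch line $\overline{\base{\Cs{j}}}$—the $\Gamma$-saturation of which, by~(B2) together with the covering~(B4), is dense near every point of $\wh\R_\st$—forcing $\gamma(-\infty)$ into a position incompatible with hyperbolicity of~$h$. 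Once this contradiction yields termination,~(B5) produces the desired $\nu$, and its $\Gamma$-translate $g\act\nu$ witnesses the claimed intersection.
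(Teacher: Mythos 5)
First, a point of reference: the paper does not prove this lemma itself---it is imported verbatim as \cite[Proposition~3.7]{Pohl_Wab}---so your attempt has to be judged on its own merits rather than against an in-text argument. Your opening reduction is sound: by \eqref{BP:coverlimitset} one may assume $\gamma(+\infty)\in\Iset{j_0}$, and once a pair $(g,j)$ with $g^{-1}\act(\gamma(+\infty),\gamma(-\infty))\in\Iset{j}\times\Jset{j}$ is found, \eqref{BP:allvectors} produces $\nu\in\Cs{j}$ with $g\act\nu$ tangent to $\gamma$. Note also that if $\gamma_n(-\infty)\notin\Iset{j_n}$ then it automatically lies in $\Jset{j_n}$, since by \eqref{BP:completegeodesics} the endpoints of $\overline{\base{\Cs{j_n}}}$ are not in $\wh\R_{\st}$; so non-termination means precisely that $\gamma(-\infty)$ and $\gamma(+\infty)$ lie in the \emph{same} nested cell $g_1\cdots g_n\act\Iset{j_n}$ for every $n$.

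The gap is that you never actually rule this out, and the justifications you offer do not hold up. (i) ``$A$ finite and the shift deterministic, hence the itinerary is eventually periodic'' is false as a matter of logic---a deterministic shift on a finite alphabet has plenty of aperiodic itineraries---and eventual periodicity of the coding of $\gamma(+\infty)$ is in any case a \emph{consequence} of the symbolic-dynamics correspondence, not an input available here. (ii) ``The forward endpoints accumulate towards $\gamma(+\infty)$ through a nested family of Markov cells'' is exactly the contraction statement $\bigcap_n g_1\cdots g_n\act\Iset{j_n}=\{\gamma(+\infty)\}$ that needs proof; none of \eqref{BP:closedgeodesicsHtoX}--\eqref{BP:intervaldecomp} gives it for free. (iii) Non-termination only shows that $\gamma$ misses the particular lines $g_1\cdots g_n\act\overline{\base{\Cs{j_n}}}$, not ``every $\Gamma$-translate of every branch line''; and the asserted density of $\Gamma\act\base{\BrU}$ near points of $\wh\R_{\st}$ is not among the axioms---\eqref{BP:coverlimitset} is a covering statement about the intervals $I_j$, not about accumulation of the lines themselves. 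What is missing is a genuine argument (e.g.\ exploiting that all the boundary lines $g_1\cdots g_n\act\overline{\base{\Cs{j_n}}}$ are disjoint from the axis $\gamma(\R)$ of $h$, lie in nested half-planes $g_1\cdots g_n\act\Plussp{j_n}$ containing $\gamma(\R)$, and interact with the $\langle h\rangle$-action, or a uniform expansivity estimate for the induced map) that forces $\gamma(-\infty)$ out of the cells after finitely many steps. As written, the heart of the proof is asserted rather than proved.
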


In~\cite[Proposition~3.24]{Pohl_Wab} it has been shown that, for a set of branches~$\BrS$, the associated branch union~$\BrU$ represents a cross section for the geodesic flow on~$\Orbi$.
A strong cross section can be obtained by considering~$\pi(\BrU_{\st})$ instead, where
\[
\BrU_{\st}\coloneqq\bigcup_{j\in A}\Cs{j,\st}\qquad\text{for}\qquad\Cs{j,\st}\coloneqq\defset{\nu\in\Cs j}{(\gamma_{\nu}(+\infty),\gamma_{\nu}(-\infty))\in\widehat\R_{\st}\times\widehat\R_{\st}}\,.
\]
The main result of~\cite{Pohl_Wab} now reads as follows.

\begin{thm}\label{THM:mainthmPW}
Let~$\Gamma$ be a geometrically finite Fuchsian group for which there exists a set of branches. Then~$\Gamma$ admits a strict transfer operator approach.
\end{thm}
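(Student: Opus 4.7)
The strategy is to assemble, directly from a given set of branches $\BrS=\{\Cs{1},\dots,\Cs{N}\}$, the full structure tuple required by the strict transfer operator formalism of \cite[Sections~4.2--4.4]{FP_NECM}, and then invoke the general machinery of \emph{op.\ cit.} The starting point is \cite[Proposition~3.24]{Pohl_Wab}, which guarantees that the branch union $\BrU$ represents a cross section for the geodesic flow on $\Orbi$; the remaining task is to upgrade this geometric cross section into a complete symbolic coding of the induced first return dynamics.

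The alphabet of the symbolic system is taken to be $A=\{1,\dots,N\}$. For each $j\in A$, property (B5) identifies the stable part $\Cs{j,\st}$ with the rectangle $\Iset{j}\times\Jset{j}$ through the map $\nu\mapsto(\gamma_\nu(+\infty),\gamma_\nu(-\infty))$. The transition sets $\Trans{}{j}{k}$ appearing in (B7) then encode the discrete dynamics: the disjoint tessellation in (B7a) assigns to each $x\in\Iset{j}$ a unique pair $(k,g)\in A\times\Trans{}{j}{k}$ with $x\in g\act\Iset{k}$, and the successor of $(x,y)$ is declared to be $(g^{-1}\act x,g^{-1}\act y)$, landing in $\Iset{k}\times\Jset{k}$ by virtue of (B7c). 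That this prescription reproduces the geometric first return to $\pi(\BrU)$ is the content of (B7b), which forbids the intermediate geodesic segment from hitting any further $\Gamma$-translate of a branch. Together with (B4) and Lemma~\ref{LEM:allperiodintersect}, this ensures that every periodic geodesic on $\Orbi$ is captured by the coding, so that no essential dynamical data is lost.

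The fast transfer operator family $\{\fTO{s}\}_{s\in\C}$ is then built branch by branch. For a tuple of holomorphic functions $f=(f_k)_{k\in A}$ on suitable complex neighbourhoods of the intervals $\Iset{k}$, one sets $(\fTO{s}f)_j(x)$ equal to the sum, over admissible pairs $(k,g)\in A\times\Trans{}{j}{k}$ with $x\in g\act\Iset{k}$, of the weighted pullbacks $|(g^{-1})'(x)|^{s}\,f_k(g^{-1}\act x)$. The disjointness clause in (B7a) ensures that this sum is unambiguously defined, and condition (B6) governs the compatibility of the construction with the time-reversal involution exchanging $\Plussp{j}$ and $\Minussp{j}$.

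The principal obstacle lies not in writing down these objects, but in verifying the precise combinatorial and analytic axioms demanded by the notion of a structure tuple. The combinatorial side is essentially a transliteration of (B1)--(B7); the analytic side, however, requires that the intervals $\Iset{j}$ admit complex neighbourhoods on which each branch of $\fTO{s}$ acts as a uniform contraction with good holomorphic behaviour. Ultimately this reduces to a geometric analysis of the accumulation behaviour of the transition sets $\Trans{}{j}{k}$ near the limit set $\Lambda(\Gamma)$, which is the technical heart of the argument. Once these verifications are in place, the theorem follows by direct appeal to \cite[Theorem~4.2]{FP_NECM}.
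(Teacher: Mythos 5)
This paper does not actually prove Theorem~\ref{THM:mainthmPW}: it is announced as ``the main result of~\cite{Pohl_Wab}'' and imported without proof, so there is no in-paper argument to measure you against. Judged on its own terms, your outline identifies the correct overall strategy --- read a structure tuple off the branches, code the first-return dynamics via the transition sets~$\Trans{}{j}{k}$, and appeal to \cite[Theorem~4.2]{FP_NECM} --- but as written it is an announcement of a proof rather than a proof, and it has concrete gaps.

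The most serious gap is finiteness. Your formula for $(\fTO{s}f)_j(x)$ sums over all $g\in\Trans{}{j}{k}$, but nothing in Definition~\ref{DEF:setofbranches} forces these sets to be finite; the present paper is explicit (see the discussion of \emph{infinitely ramified} sets of branches around Proposition~\ref{PROP:finram}) that the structure tuple of~\cite{FP_NECM} ``presumes certain sets of transformations of finite cardinality,'' and in the presence of cusps the transition sets are genuinely infinite (accumulating parabolic translates). So your operator is a priori an infinite sum, the combinatorial axioms of the structure tuple fail as stated, and one needs the cuspidal acceleration / slow-to-fast induction step to repair this --- a step your proposal never mentions. Second, the theorem is stated for the stripped-down Definition~\ref{DEF:setofbranches}; by Remark~\ref{REM:noadmandnoncollap}, the machinery of~\cite{Pohl_Wab} is built on \emph{admissible, non-collapsing} sets of branches, so a proof must first invoke \cite[Proposition~5.9]{Pohl_Wab} to upgrade the given~$\BrS$ --- another reduction you omit. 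Finally, you explicitly name the ``technical heart of the argument'' (complex neighbourhoods of the~$\Iset{j}$, uniform contraction, holomorphy and nuclearity of the branches) and then defer it entirely; those verifications are precisely what distinguishes a strict transfer operator approach from a formal symbolic coding, and a proof cannot end by pointing at its own hardest step.
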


The definition of strict transfer operator approaches has been included in~\cite[Section~3.7]{Pohl_Wab}.
We also refer the reader to~\cite{FP_NECM}, in particular to Theorem~4.2 ibid., which states the relation to transfer operators yielding a representation of the Selberg zeta function as mentioned in the introduction.

%==================================================================
\section{Isometric spheres}\label{SEC:isoms}
%==================================================================

In this section we collect the technical background information as well as some observations about isometric spheres in our setting.

Let~$\Gamma$ be geometrically finite Fuchsian group.
By conjugation in~$\PSLR$ we can always achieve that~$\infty\in\wh\R\setminus\wh\R_{\st}$, which we assume for simplicity.
Then the stabilizer subgroup
\[
\Gamma_{\infty}\coloneqq\Stab{\Gamma}{\infty}
\]
is either trivial, if~$\infty\in\Omega(\Gamma)$, or generated by a single parabolic transformation
\[
t_{\lambda}\coloneqq\bmat{1}{\lambda}{0}{1}\,,
\]
where~$\lambda>0$ is uniquely determined.
In the latter case~$\infty$ represents a cusp of~$\Orbi$ and~$\lambda$ is called the \emph{cusp width} of~$\pi(\infty)$.
To each~$g=\begin{bsmallmatrix}a&b\\c&d\end{bsmallmatrix}\in\Gamma\setminus\Gamma_\infty$ we assign its \emph{isometric sphere}
\[
\iso{g}\coloneqq\defset{z\in\H}{\abs{g'(z)}=1}=\defset{z\in\H}{\abs{z+\tfrac{d}{c}}=\tfrac{1}{\abs{c}}}\,.
\]
We denote by~$\Iso{\Gamma}$ the set of all isometric spheres.
For~$\I\in\Iso{\Gamma}$ we call~$g\in\Gamma\setminus\Gamma_\infty$ a \emph{generator} of~$\I$ whenever~$\I=\iso{g}$.
We further define the \emph{interior} and the \emph{exterior},
\begin{align*}
\mathrm{int}\,\I&=\intiso{g}\coloneqq\defset{z\in\H}{\abs{g'(z)}>1}
\intertext{and}
\mathrm{ext}\,\I&=\extiso{g}\coloneqq\defset{z\in\H}{\abs{g'(z)}<1}
\end{align*}
of~$\I=\iso{g}\in\Iso{\Gamma}$.
We recall the following properties of isometric spheres (where again~$g=\begin{bsmallmatrix}a&b\\c&d\end{bsmallmatrix}\in\Gamma\setminus\Gamma_{\infty}$):
\begin{enumerate}[label=$(\mathrm{ISO}\arabic*)$,ref=$\mathrm{ISO}\arabic*$]
\item\label{ITEM:iso:eqn}
The equations
\begin{align*}
g\act\iso{g}&=\iso{g^{-1}}\,,\\
g\act\intiso{g}&=\extiso{g^{-1}}\,,
\intertext{and}
g\act\extiso{g}&=\intiso{g^{-1}}
\end{align*}
hold true.

\item\label{ITEM:iso:height}
For every~$z\in\iso{g}$ we have~$\Ima(g\act z)=\Ima{z}$.

\item\label{ITEM:iso:centers}
The center of~$\iso{g}$ is given by~$g^{-1}\act\infty$ and the center of~$\iso{g^{-1}}$ by~$g\act\infty$.

\item\label{ITEM:iso:radii}
The radii of~$\iso{g}$ and~$\iso{g^{-1}}$ coincide and equal~$\abs{c}^{-1}$.

\item\label{ITEM:iso:gener}
We have~$\iso{g_1}=\iso{g_2}$ for~$g_1,g_2\in\Gamma\setminus\Gamma_{\infty}$ if and only if~$g_1g_2^{-1}\in\Gamma_\infty$, meaning that generators of isometric spheres are uniquely determined up to left multiplication with elements in~$\Gamma_{\infty}$.
Furthermore, for~$g\in\Gamma\setminus\Gamma_{\infty}$ and~$n\in\Z$ we have~$\iso{gt_{\lambda}^n}=t_\lambda^{-n}\act\iso{g}$. (\cite[Lemmas~6.1.2 and~6.1.3]{Pohl_diss}).

\item\label{ITEM:iso:boundradii}
The radii of isometric spheres are bounded from above.

\item\label{ITEM:iso:locfin}
The sets~$\Iso{\Gamma}$ and~$\defset{\intiso{g}}{g\in\Gamma\setminus\Gamma_\infty}$ are locally finite respectively. (See \cite[Proposition~6.1.5]{Pohl_diss} for the case that~$\Gamma$ has cusps, and~\cite[Lemma~4.3.7]{Pohl_isofunddom} for the case that it does not.)
\end{enumerate}

Statement~\eqref{ITEM:iso:gener} implies that the map
\[
\Gamma\ni\bmat{a}{b}{c}{d}\mapstoo\abs{c}\in\R
\]
is constant on the double coset~$\Gamma_\infty g\Gamma_\infty$, for every~$g\in\Gamma$.
On a related note, the following result shows that with the center of an isometric spheres its radius is already uniquely given as well.

\begin{lemma}\label{LEM:isoconcentric}
Let~$\I,\J\in\Iso{\Gamma}$ be concentric.
Then~$\I=\J$.
\end{lemma}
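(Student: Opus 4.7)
The plan is to chase generators and use properties \eqref{ITEM:iso:centers} and \eqref{ITEM:iso:gener}. Concretely, I would pick generators~$g,h\in\Gamma\setminus\Gamma_\infty$ such that~$\I=\iso g$ and~$\J=\iso h$, and translate the geometric hypothesis (equal centers) into an algebraic identity in~$\Gamma$.

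First, by property~\eqref{ITEM:iso:centers} the center of~$\I$ is~$g^{-1}\act\infty$ and the center of~$\J$ is~$h^{-1}\act\infty$. Concentricity of~$\I$ and~$\J$ therefore yields
\[
g^{-1}\act\infty \;=\; h^{-1}\act\infty\,,
\]
which I would rewrite as~$hg^{-1}\act\infty=\infty$, i.e.,
\[
hg^{-1}\in\Stab{\Gamma}{\infty}=\Gamma_\infty\,.
\]
Since~$\Gamma_\infty$ is a subgroup, this is equivalent to~$gh^{-1}\in\Gamma_\infty$, so property~\eqref{ITEM:iso:gener} applies and delivers~$\iso g=\iso h$, i.e.,~$\I=\J$.

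There is no real obstacle here; the argument is essentially a one-line consequence of the listed properties of isometric spheres. The only small thing to keep track of is that~$g,h$ automatically lie in~$\Gamma\setminus\Gamma_\infty$ (since otherwise they would not generate an isometric sphere at all), so the invocations of~\eqref{ITEM:iso:centers} and~\eqref{ITEM:iso:gener} are legitimate. Note that the case~$\Gamma_\infty=\{\id\}$ (when~$\infty\in\Omega(\Gamma)$) is covered automatically: there~$hg^{-1}\in\Gamma_\infty$ forces~$h=g$ outright.
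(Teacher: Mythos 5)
Your proof is correct, and it takes a genuinely different, shorter route than the paper's. You translate concentricity via \eqref{ITEM:iso:centers} into $g^{-1}\act\infty=h^{-1}\act\infty$, hence $hg^{-1}\in\Stab{\Gamma}{\infty}=\Gamma_\infty$, and close with the ``if'' direction of \eqref{ITEM:iso:gener}. The paper instead argues in coordinates: writing $g=\textbmat{a}{b}{c}{d}$ with $c>0$, it observes that equality of centers forces any generator of~$\J$ to have second row proportional to~$(c,d)$, say $h=\textbmat{x}{y}{rc}{rd}$ with $r\ne0$, computes $gh^{-1}$ to be upper triangular with diagonal entries $r$ and $\tfrac{1}{r}$, concludes $gh^{-1}\in\Gamma_\infty$, and then exploits the explicit form of~$\Gamma_\infty$ (trivial or generated by~$t_\lambda$) to force $r=1$, so that the radii $\tfrac{1}{\abs{c}}$ and $\tfrac{1}{\abs{rc}}$ coincide. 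The content is essentially the same: the matrix computation is a hands-on verification that $gh^{-1}$ fixes~$\infty$, and the final step $r=1$ recovers the equality of radii that your appeal to~\eqref{ITEM:iso:gener} packages away. Since~\eqref{ITEM:iso:gener} is quoted from~\cite{Pohl_diss} rather than derived from this lemma, your use of it is not circular; what the paper's version buys is self-containedness (it relies only on the structure of~$\Gamma_\infty$ and no imported equivalence), while yours buys brevity.
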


\begin{proof}
Let~$g=\begin{bsmallmatrix}a&b\\c&d\end{bsmallmatrix}\in\Gamma\setminus\Gamma_\infty$ be a generator of~$\I$ with~$c>0$.
By~\eqref{ITEM:iso:centers} resp.~\eqref{ITEM:iso:radii} the center of~$\I$ is then given by~$-\tfrac{d}{c}$ and its radius by~$\tfrac{1}{c}$.
By assumption, a generator of~$\J$ must be of the form~$h\coloneqq\begin{bsmallmatrix}x&y\\rc&rd\end{bsmallmatrix}$ with some~$r\in\R\setminus\{0\}$.
From its determinant condition we obtain~$dx-cy=\tfrac{1}{r}$.
Then
\[
gh^{-1}=\begin{bmatrix}a&b\\c&d\end{bmatrix}\begin{bmatrix}rd&-y\\-rc&x\end{bmatrix}=\begin{bmatrix}adr-bcr&bx-ay\\0&dx-cy\end{bmatrix}=\begin{bmatrix}r&bx-ay\\0&\tfrac{1}{r}\end{bmatrix}\,.
\]
Hence,~$gh^{-1}\in\Gamma_\infty$.
Since~$\Gamma_\infty$ is either trivial or cyclic with generator~$\begin{bsmallmatrix}1&\lambda\\0&1\end{bsmallmatrix}$ for some~$\lambda\in\R$, this implies~$r=1$, which in turn yields the assertion.
\end{proof}

Later on we will also require the following observation, which is concerned with the interrelation between the isometric spheres an the limit set of~$\Gamma$.

\begin{lemma}\label{LEM:isolimit}
Let~$\Gamma$ be non-conjugate to any group generated by~$z\mapsto-\tfrac{1}{z}$ and~$z\mapsto\lambda z$ for~$\lambda>1$ and suppose that~$\Lambda(\Gamma)$ consists of more than one point.
For~$g\in\Gamma\setminus\Gamma_\infty$ we set
\[
\underiso{g}\coloneqq\left(\pr(\iso{g})\right)^{\circ}=\left(\geo\intiso{g}\right)^{\circ}\,.
\]
Then for every~$g\in\Gamma\setminus\Gamma_\infty$ we have
\[
\underiso{g}\cap\Lambda(\Gamma)\ne\varnothing\,.
\]
\end{lemma}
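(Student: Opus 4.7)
My plan is to argue by contradiction: assume $\underiso{g}\cap\Lambda(\Gamma)=\varnothing$ for some $g=\bmat{a}{b}{c}{d}\in\Gamma\setminus\Gamma_\infty$. The first observation is that the center $g^{-1}\act\infty=-d/c$ of $\iso{g}$ is the midpoint, hence an interior point, of $\underiso{g}$. By $\Gamma$-invariance of $\Lambda(\Gamma)$, the possibility $\infty\in\Lambda(\Gamma)$ would immediately yield $g^{-1}\act\infty\in\Lambda(\Gamma)\cap\underiso{g}$, contradicting our hypothesis; hence $\infty\in\Omega(\Gamma)$ and $\Lambda(\Gamma)\subset\R$ is bounded. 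Applying $g$ to the inclusion $\Lambda(\Gamma)\subseteq\wh\R\setminus\underiso{g}$ and using property~\eqref{ITEM:iso:eqn} to verify $g\act(\wh\R\setminus\underiso{g})=\overline{\underiso{g^{-1}}}$, invariance then delivers the sharper constraint $\Lambda(\Gamma)\subseteq\overline{\underiso{g^{-1}}}$, a bounded closed interval.

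I would next distinguish cases according to the isometry type of $g$. If $g$ is hyperbolic, a short computation with the explicit formula $\fixp{\pm}{g}=\bigl((a-d)\pm\sqrt{(a+d)^2-4}\bigr)/(2c)$ shows that $\abs{\fixp{\pm}{g}+d/c}<1/\abs{c}$ holds for exactly one sign (since $\abs{t\pm\sqrt{t^2-4}}<2$ holds for exactly one sign whenever $\abs{t}>2$), so one of the two fixed points---which are in $\Lambda(\Gamma)$---lies in $\underiso{g}$, a contradiction. If $g$ is parabolic, the same formula (with $(a+d)^2=4$) places $\fixp{}{g}$ on $\partial\underiso{g}$. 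Since $\Gamma$ contains a parabolic element and $\abs{\Lambda(\Gamma)}>1$, $\Gamma$ is necessarily non-elementary; conjugating $\fixp{}{g}$ to $\infty$ turns $g$ into a real translation, whence $\Z$-orbits of any second limit point accumulate at $\fixp{}{g}$ from both sides in $\wh\R$, placing points of $\Lambda(\Gamma)$ arbitrarily close to $\fixp{}{g}$ inside $\underiso{g}$---again a contradiction.

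The hard part will be the elliptic case. Here $g$ fixes some $z_0\in\H$, and $\abs{g'(z_0)}=1$ places $z_0$ on $\iso{g}$; as $\iso{g}$ is a Euclidean semicircle orthogonal to $\wh\R$, it is in fact a hyperbolic geodesic through $z_0$, hence a \emph{diameter} in the Poincar\'e disk model $\dD$ centered at $z_0$. Under the induced identification $\wh\R\simeq\partial\dD$, the arc $\underiso{g}$ corresponds to an open semicircle of length $\pi$. If $g$ has order $k\geq 3$, then $g$ acts on $\dD$ as a Euclidean rotation by $2\pi m/k$ with $\gcd(m,k)=1$, and the $k$ translates $g^j(\underiso{g})$ pairwise overlap (as $2\pi/k<\pi$) and cover $\partial\dD$ up to at most $2k$ boundary points. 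Since elliptic elements of order $\geq 3$ force $\Gamma$ non-elementary (elementary Fuchsian groups with $\abs{\Lambda}>1$ admit only order-$2$ elliptic elements), $\Lambda(\Gamma)$ is infinite and must meet some $g^{j_0}(\underiso{g})$; applying $g^{-j_0}$ then furnishes a limit point in $\underiso{g}$. The delicate order-$2$ subcase is precisely what makes the non-conjugacy assumption necessary: then $g=g^{-1}$ and $\underiso{g}=\underiso{g^{-1}}$, so the first-step inclusion combined with the assumption forces $\Lambda(\Gamma)\subseteq\partial\underiso{g}$, giving $\abs{\Lambda(\Gamma)}=2$, and the classification of elementary Fuchsian groups with this limit set containing an involution identifies $\Gamma$ as conjugate to $\langle z\mapsto-1/z,\,z\mapsto\lambda z\rangle$---exactly the group excluded by hypothesis.
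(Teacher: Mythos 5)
Your proof is correct, and it shares the paper's overall architecture---a case distinction on the isometry type of~$g$, in each case producing a~$\Gamma$-translate of a limit point inside~$\underiso{g}$---but two of the three cases are implemented genuinely differently. The hyperbolic case is identical in substance: the paper simply asserts that the repelling fixed point lies in~$\underiso{g}$, and your identity $\abs{t+\sqrt{t^2-4}}\cdot\abs{t-\sqrt{t^2-4}}=4$ is precisely why exactly one fixed point falls inside. For parabolic~$g$ the paper iterates the nested isometric spheres~$\iso{g^{n+1}}\subseteq\intiso{g^n}$ and locates the index~$n$ at which a given limit point crosses into~$\underiso{g}$, whereas you conjugate the fixed point to~$\infty$ and use two-sided accumulation of the~$\left<g\right>$-orbit of a second limit point at~$\fixp{}{g}$, which is an endpoint of the interval~$\underiso{g}$; both are sound. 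The most substantial divergence is the elliptic case of order~$\sigma\geq3$: the paper constructs the auxiliary arc~$\mathscr{V}$ (the common exterior of~$\iso{g}$ and~$\iso{g^{-1}}$ on the boundary), shows~$g^{-1}\act\mathscr{V}\subseteq\underiso{g}$, and covers~$\wh\R$ by the translates~$g^i\act\bigl(\mathscr{V}\cup\{\xi(g)\}\bigr)$, which requires the bookkeeping with the points~$\xi(q),\xi'(q)$ and the ordering~\eqref{EQ:xiordering}. You instead observe that~$\iso{g}$ passes through the elliptic fixed point (since~$\abs{g'}=1$ there), hence is a diameter in the disk model centered at that point, so~$\underiso{g}$ is an open half-circle whose~$\sigma$ rotational translates under~$\left<g\right>$ already cover the whole boundary circle once~$\sigma\geq3$; this is cleaner, covers~$\wh\R$ by translates of~$\underiso{g}$ itself, and in fact leaves no exceptional points at all (your hedge about~$2\sigma$ missing points is unnecessary). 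In the involution case you also make explicit what the paper dismisses as obvious: the contradiction hypothesis together with~\eqref{EQ:mapunderiso} forces~$\Lambda(\Gamma)\subseteq\partial\underiso{g}$, hence~$\abs{\Lambda(\Gamma)}=2$, and the presence of an elliptic element rules out the cyclic alternative in Katok's classification, leaving exactly the excluded group~$\left<z\mapsto-\tfrac{1}{z},z\mapsto\lambda z\right>$. Your preliminary observation that~$\infty\in\Lambda(\Gamma)$ would immediately place the center~$g^{-1}\act\infty$ of~$\iso{g}$ in~$\underiso{g}\cap\Lambda(\Gamma)$ is a nice touch that handles the cusp-at-infinity configuration uniformly.
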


\begin{proof}
First note that, since the transformations in~$\PSLR$ extend smoothly to~$\overline{\H}^\geo$, it follows from~\eqref{ITEM:iso:eqn} that
\begin{equation}\label{EQ:mapunderiso}
g\act\underiso{g}=\wh\R\setminus\overline{\underiso{g^{-1}}}\qquad\text{and}\qquad g\act\bigl(\wh\R\setminus\underiso{g}\bigr)=\overline{\underiso{g^{-1}}}\,.
\end{equation}

Since~$\Lambda(\Gamma)$ is invariant under transformations in~$\Gamma$, it suffices to find, for every~$g\in\Gamma$, a pair~$(x,q)\in\Lambda(\Gamma)\times\Gamma$ such that~$q\act x\in\underiso{g}$.
For~$g\in\Gamma$ hyperbolic~$\underiso{g}$ contains the repelling fixed point of~$g$, which already implies the assertion in this case.

Let~$g\in\Gamma$ be parabolic and let~$x\in\Lambda(\Gamma)$ be not the fixed point of~$g$.
Note that for parabolic transformations~$p\in\Gamma$ and~$n\in\N$ we always have~$\iso{p^{n+1}}\subseteq\intiso{p^n}$ as well as the convergence of~$\iso{p^n}$ to the fixed point of~$p$ (in the sense that both elements of~$\geo\iso{p^n}$ converge to~$p$ as~$n\to\infty$).
Hence, if~$x\in\wh\R\setminus\underiso{g^{-1}}$, then, because of~\eqref{ITEM:iso:eqn}, we obtain~$g^{-2}\act x\in\underiso{g}$.
If~$x\in\underiso{g^{-1}}$ we find~$n\in\N$ such that~$x\in\underiso{g^{-n+1}}\setminus\underiso{g^{-n}}$.
Again,~\eqref{ITEM:iso:eqn} yields~$g^{-n-1}\act x\in\underiso{g}$.

Finally, assume that~$g\in\Gamma$ is elliptic of some order~$\sigma\in\N\setminus\{1\}$.
For~$\sigma=2$, i.e.,~$g$ an involution, we have~$\iso{g}=\iso{g^{-1}}$ and the assertion is obvious from~\eqref{ITEM:iso:eqn}.
Thus, assume~$\sigma>2$.
Then the isometric spheres of~$g$ and~$g^{-1}$ intersect without coinciding and for~$q\in\Gamma$ elliptic we define~$\xi(q)\in\R$ via the rule
\[
\{\xi(q)\}=\geo\iso{q}\setminus\underiso{q^{-1}}\,,
\]
which is obviously well-defined.
We further define~$\xi'(q)\in\R$ to be the unique point such that
\[
\{\xi(q),\xi'(q)\}=\geo\iso{q}\,.
\]
Then
\begin{equation}\label{EQN:elltranspoint}
q\act\xi(q)=\xi(q^{-1})\qquad\text{and}\qquad q\act\xi'(q)=\xi'(q^{-1})\,.
\end{equation}
By renaming~$g$ to~$g^{-1}$ if necessary we may assume the ordering
\begin{equation}\label{EQ:xiordering}
\xi(g)<\xi'(g^{-1})<\xi'(g)<\xi(g^{-1})\,.
\end{equation}
By~\cite[Theorem~2.4.3]{Katok_fuchsian} the only elementary Fuchsian groups are either cyclic or conjugate to a group generated by~$z\mapsto-\tfrac{1}{z}$ and~$z\mapsto\lambda z$ for~$\lambda>1$.
Hence, by assumption, there is no constellation in which~$\xi(g),\xi(g^{-1})$ are the only limit points of~$\Gamma$, and~$\Gamma$ is non-elementary. Therefore, we may choose
\begin{equation}\label{EQN:choosex}
x\in\Lambda(\Gamma)\setminus\bigcup_{i=1}^{\sigma-1}g^i\act\{\xi(g)\}\,.
\end{equation}
Define
\begin{align*}
\mathscr{V}\coloneqq&\left(\geo(\extiso{g}\cap\extiso{g^{-1}})\right)^{\circ}\\
=&\,(\max\{\xi(g),\xi(g^{-1})\},+\infty)_\R\cup\{\infty\}\cup(-\infty,\min\{\xi(g),\xi(g^{-1})\})_\R\,.
\end{align*}
Then~\eqref{ITEM:iso:eqn} implies
\begin{equation}\label{EQN:elltransset}
g^{-1}\act\mathscr{V}\subseteq\underiso{g}\,,
\end{equation}
and thus,~$g^{-1}\act\mathscr{V}\cap\mathscr{V}=\varnothing$.
But an application of~\eqref{EQN:elltranspoint} with~$q=g^{-1}$ implies that the set~$g^{-1}\act\left(\mathscr{V}\cup\{\xi(g^{-1})\}\right)\cup\mathscr{V}$ is an interval in~$\wh\R$.
Iterating this argument with~$q=g^{-2},g^{-3},\dots$ yields the (not necessarily disjoint) decomposition
\[
\wh\R=\bigcup_{i=1}^{\sigma-1}g^i\act\bigl(\mathscr{V}\cup\{\xi(g)\}\bigr)\,.
\]
Hence, for every choice of~$x$ obeying~\eqref{EQN:choosex} we find~$i\in\{1,\dots,\sigma-1\}$ such that~$g^i\act x\in\mathscr{V}$. Combining this with~\eqref{EQN:elltransset} yields the assertion for~$g$ being elliptic.
\end{proof}

With the notion of isometric spheres at hand we define the \emph{common exterior}
\[
\kund\coloneqq\bigcap_{\I\in\Iso{\Gamma}}\mathrm{ext}\,\I=\bigcap_{g\in\Gamma\setminus\Gamma_{\infty}}\extiso{g}\,.
\]
This is a convex set with its boundary in~$\H$ given by non-vertical geodesic segments contained in isometric spheres.
Under certain assumptions, it naturally contains fundamental domains for~$\Gamma$ as subsets:
For instance, if~$\Orbi$ has cusps and~$\infty$ represents a cusp of~$\Orbi$, then there exist real numbers~$r\in\R$ for which the set~$\fund(r)\coloneqq\fund_{\infty}(r)\cap\kund$ is a geometrically finite, convex, exact fundamental polyhedron for~$\Gamma$ in~$\H$ (see \cite{Ford_book, Ford} as well as \cite[Theorem~6.1.38]{Pohl_diss}), where
\[
\fund_{\infty}(r)\coloneqq\pr^{-1}((r,r+\lambda)_\R)
\]
is a fundamental domain for~$\Gamma_\infty$.
If~$\Orbi$ does not have cusps, then~$\kund$ itself is already a fundamental polyhedron for~$\Gamma$ (see also Proposition~\ref{PROP:kundisfund} below).
In both cases we call fundamental domains arising in this way \emph{Ford fundamental domain} or \emph{fundamental domain of the Ford type}.
The boundary structure of Ford fundamental domains enjoys various special properties.
One of particular importance for our constructions is stated in the following result.

\begin{lemma}\label{LEM:vertexcycles}
Let~$\Gamma$ be a geometrically finite Fuchsian group and let~$\fund$ be a Ford fundamental domain for~$Gamma$.
Let~$z\in\partial_\geo\fund$.
Then for all~$w\in\Gamma\act z\cap\partial_\geo\fund$ we have
\[
\Ima{w}=\Ima{z}\,.
\]
\end{lemma}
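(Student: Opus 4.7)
The plan is to isolate the two mechanisms by which $\Gamma$ can map a boundary point of $\fund$ to another such point: powers of the translation $t_\lambda$ (whenever $\Gamma_\infty$ is non-trivial), and the side-pairings arising from isometric spheres. Both preserve imaginary parts---the former as Euclidean translations, the latter by virtue of property~\eqref{ITEM:iso:height}. The core task is therefore to show that whenever $w=g\act z$ with $z,w$ on the boundary of $\fund$ and $g\in\Gamma\setminus\Gamma_\infty$, one already has $z\in\iso{g}$.

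First, if $z\in\partial_\geo\fund\cap\wh\R$, then $w=g\act z\in\wh\R$ as well, and $\Ima z=0=\Ima w$ holds trivially. I may therefore assume $z,w\in\partial\fund\subseteq\overline{\fund}\subseteq\overline{\kund}$, the final containment being valid for both types of Ford fundamental domain (with or without cusps). By continuity of the derivatives (and the local finiteness~\eqref{ITEM:iso:locfin} of isometric spheres), every point of $\overline{\kund}$ satisfies $\abs{h'(\cdot)}\leq 1$ for all $h\in\Gamma\setminus\Gamma_\infty$.

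Now fix $g\in\Gamma$ with $w=g\act z$. If $g\in\Gamma_\infty$, then $g=t_\lambda^n$ for some $n\in\Z$ and the assertion is immediate. Otherwise, applying the chain rule to $\id=g^{-1}\circ g$ at $z$ yields $(g^{-1})'(w)\cdot g'(z)=1$. From $z\in\overline{\kund}$ I read off $\abs{g'(z)}\leq 1$, while $w\in\overline{\kund}$ applied to $h=g^{-1}\in\Gamma\setminus\Gamma_\infty$ gives $\abs{(g^{-1})'(w)}\leq 1$, which via the identity above translates into $\abs{g'(z)}\geq 1$. Combining both forces $\abs{g'(z)}=1$, i.e.\ $z\in\iso{g}$, and~\eqref{ITEM:iso:height} concludes. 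I do not foresee a serious obstacle; the only point requiring care is the passage from the strict inequalities defining $\kund$ to the non-strict ones valid on $\overline{\kund}$, which is routine.
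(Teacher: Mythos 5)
Your proof is correct. The overall skeleton matches the paper's: reduce to the boundary in $\H$, dispose of $\Gamma_\infty$ by noting translations preserve height, and reduce the remaining case to $z\in\iso{g}$ so that~\eqref{ITEM:iso:height} applies. Where you differ is in the mechanism for that last reduction. The paper argues by contraposition at the level of sets: if $z\in\partial\fund\setminus\iso{h}$, then $z\in\extiso{h}$, so by~\eqref{ITEM:iso:eqn} the image $h\act z$ lies in $\intiso{h^{-1}}$, which is disjoint from $\overline{\kund}\supseteq\partial\fund$ (the paper invokes an identity from Pohl's work to see this, though disjointness of the open set $\intiso{h^{-1}}$ from $\kund$ already suffices). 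You instead work at the level of derivatives: from $z,w\in\overline{\kund}$ you get $\abs{g'(z)}\leq 1$ and $\abs{(g^{-1})'(w)}\leq 1$, and the chain-rule identity $(g^{-1})'(w)\,g'(z)=1$ squeezes $\abs{g'(z)}=1$. This is essentially the infinitesimal version of~\eqref{ITEM:iso:eqn}, but it is self-contained, handles the cusped and cuspless cases uniformly, and avoids the cited complement identity; the only point to be pedantic about is that the passage from $\kund\subseteq\extiso{h}$ to $\overline{\kund}\subseteq\{\abs{h'(\cdot)}\leq1\}$ needs only continuity of $\abs{h'}$, not local finiteness, so your parenthetical appeal to~\eqref{ITEM:iso:locfin} is superfluous but harmless.
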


\begin{proof}
Since~$\Gamma$ fixes the upper half-plane as well as~$\wh\R$ respectively, it suffices to consider the boundary of~$\fund$ in~$\H$.
It further suffices to consider the case that~$\Orbi$ has cusps, because the proof in that case already includes all the arguments necessary for the case without cusps.
Without loss of generality we may assume that~$\infty$ represents a cusp of~$\Orbi$.
(If~$\Orbi$ is void of cusps we assume instead that a neighborhood of~$\infty$ is contained in the ordinary set~$\Omega(\Gamma)$.)
Since~$\fund$ is a Ford fundamental domain we can write
\begin{equation}\label{EQN:representwund}
\fund=\fund_\infty(r)\cap\bigcap_{g\in\Gamma\setminus\Gamma_\infty}\extiso{g}\,,
\end{equation}
where~$r\in\R$ and~$\fund_\infty(r)$ is as before.

Let~$z=x+\i y\in\partial\fund$ and let~$h\in\Gamma$.
There are three possibilities for the interrelation of~$z$ and~$h$: either~$h\in\Gamma_\infty$,~$h\in\Gamma\setminus\Gamma_\infty$ and~$z\in\iso{h}$, or~$h\in\Gamma\setminus\Gamma_\infty$ and~$z\notin\iso{h}$.
In the first case we have~$h=t_\lambda^m$, with~$m\in\Z$ and~$t_\lambda$ as before, with~$\lambda>0$ the cusp width of~$\pi(\infty)$.
Then
\[
\Ima(h\act z)=\Ima(x+\i y + m\lambda)=y=\Ima{z}\,.
\]
In the second case we obtain~$\Ima(h\act z)=\Ima{z}$ immediately from~\eqref{ITEM:iso:height}.
This leaves the third case.
From~\eqref{EQN:representwund} we obtain
\begin{equation}\label{EQN:partialwundin}
\begin{aligned}
\partial\fund&\subseteq\biggl(\partial\fund_\infty(r)\cup\partial\Bigl(\bigcap_{g\in\Gamma\setminus\Gamma_\infty}\extiso{g}\Bigr)\biggr)\setminus\complement\Bigl(\overline{\bigcap_{g\in\Gamma\setminus\Gamma_\infty}\extiso{g}}\Bigr)\\
&\subseteq\overline{\bigcap_{g\in\Gamma\setminus\Gamma_\infty}\extiso{g}}\,.
\end{aligned}
\end{equation}
Since~$z\in\partial\fund\setminus\iso{h}$ and~$\partial(\extiso{h})=\iso{h}$, relation~\eqref{EQN:partialwundin} implies that~$z\in\extiso{h}$.
From~\eqref{ITEM:iso:eqn} we obtain
\[
h\act z\in\intiso{h^{-1}}\subseteq\bigcup_{g\in\Gamma\setminus\Gamma_\infty}\intiso{g}=\complement\biggl(\overline{\bigcap_{g\in\Gamma\setminus\Gamma_\infty}\extiso{g}}\biggr)\,,
\]
where the identity is due to~\cite[Proposition~3.12]{Pohl_isofunddom}.
Comparing this to the relation in~\eqref{EQN:partialwundin} yields~$h\act z\notin\partial\fund$ and thereby finishes the proof.
\end{proof}

An isometric sphere~$\I\in\Iso{\Gamma}$ is called \emph{relevant}, if it contributes non-trivially to the boundary of~$\kund$ in~$\H$, that is, if~$\I\cap\partial\kund$ contains more than one point.
We denote by~$\REL{\Gamma}$ the subset of~$\Iso{\Gamma}$ of all relevant isometric spheres.
We further denote by~$\Gamma_{\RELL}$ the subset of~$\Gamma$ of generators of isometric spheres in~$\REL{\Gamma}$.

\begin{enumerate}[label=$(\mathrm{ISO}\arabic*)$,ref=$\mathrm{ISO}\arabic*$]
\setcounter{enumi}{7}
\item\label{ITEM:iso:relpart}
Let~$g\in\Gamma_{\RELL}$.
Then there exist~$\xi_1,\xi_2\in\overline{\H}^{\geo}$ such that
\[
\overline{\iso{g}}^{\geo}\cap\partial_{\geo}\kund=[\xi_1,\xi_2]_{\H}\,.
\]
Furthermore, also~$g^{-1}\in\Gamma_{\RELL}$ and we have
\[
\overline{\iso{g^{-1}}}^{\geo}\cap\partial_{\geo}\kund=[g\act\xi_1,g\act\xi_2]_{\H}\,.
\]
(This is proven analogously to \cite[Proposition~6.1.29]{Pohl_diss}.)
\end{enumerate}

%==================================================================
\section{Cusp expansion}\label{SEC:cuspexp}
%==================================================================

In this section we review the cusp expansion algorithm by Pohl~\cite{Pohl_diss}.
The cross sections it produces are seen to come from sets of branches.
This comes as no surprise, since Pohl's algorithm has been the starting point of our studies.
The notion of sets of branches has been introduced in order to identify the key aspects of her approach and subsequently generalize her results to a broader class of Fuchsian groups as well as a wider variety of suitable cross sections.
Besides stating the main result about the attainment of a set of branches, in this section we further collect several properties specific to the cusp expansion method.
We refer the reader to~\cite{Pohl_diss} for the explicit constructions and results regarding these properties (exact references are added to each of them).

As the name suggests, the essential restriction the cusp expansion algorithm faces is that the orbisurface in question is required to have cusps, or, equivalently, that its fundamental group~$\Gamma$ contains parabolic elements.
We will demonstrate how to overcome this restriction in Section~\ref{SEC:cuspless}, but for the remainder of this section we assume~$\Gamma$ to obey it.
Since all of our considerations are invariant under conjugation in~$\PSLR$, we may assume without loss of generality that~$\infty$ represents a cusp of~$\Orbi$.
As mentioned in Section~\ref{SEC:isoms}, the stabilizer subgroup~$\Gamma_\infty$
of~$\infty$ in~$\Gamma$ is then cyclic and generated by~$t_{\lambda}=\begin{bsmallmatrix}1&\lambda\\0&1\end{bsmallmatrix}$, for~$\lambda>0$ the unique cusp width of~$\pi(\infty)$.

For every~$\I\in\Iso{\Gamma}$ there exists a unique point~$\summit{\I}\in\H$ of maximal height, called the \emph{summit} of~$\I$.
For~$g=\begin{bsmallmatrix}1&\lambda\\0&1\end{bsmallmatrix}\in\Gamma\setminus\Gamma_\infty$ we have~$c\ne0$ and
\begin{equation}\label{EQDEF:summit}
\summit{\iso{g}}=-\frac{d}{c}+\frac{1}{\abs{c}}\,.
\end{equation}

The cusp expansion algorithm presumes~$\Gamma$ to fulfill the following condition:
\begin{enumerate}[label=$(\mathrm{A})$,ref=$\mathrm{A}$]
\item\label{condA}
For every~$\I\in\REL{\Gamma}$ its summit $\summit{\I}$ is contained in the geodesic segment~$\I\cap\partial\kund$ but not an endpoint of it.
\end{enumerate}
In \cite[Section 6.3]{Pohl_diss} it is shown that this is a proper restriction, thus we have to assume it here as well.
However, we make no additional use of it here.\footnote{In fact, we go to some lengths in order to avoid use of~\eqref{condA}.
Lemma~\ref{LEM:thirdsphere} as well as large parts of the proof of Proposition~\ref{LEM:Anotempty} below are necessary solely to that end.}
That means that, once (a modification of) the cusp expansion algorithm has been shown to work regardless of this restriction (which is conjectured to be feasible), it can safely be removed from here as well.
As long as all statements of this section remain valid for the modification of the algorithm, no further adjustments are required.

Given~\eqref{condA}, the properties of~$\kund$ allow for the construction of a set
\[
\BrS_{\P}\coloneqq\{\Cs{\P,1},\dots,\Cs{\P,N}\}\,,
\]
with a certain~$N\in\N$ and subsets~$\Cs{\P,j}$ of the unit tangent bundle~$\UTB\H$, such that~$\BrU_{\P}\coloneqq\bigcup\BrS_{\P}$ is a representative for a cross section for the geodesic flow on~$\Orbi$.
In the notation of \cite{Pohl_diss} we have
\begin{align}\label{EQN:inPnota}
\BrS_{\P}=\defset{\mathrm{CS}'(\widetilde{\mathcal{B}})}{\widetilde{\mathcal{B}}\in\widetilde{\mathbb{B}}_{\mathbb{S},\mathbb{T}}}\,,
\end{align}
where the subscript~$\mathbb{S}$ fixes a sequence of choices to be made during the construction of these sets, and the subscript~$\mathbb{T}$ indicates that arbitrary translations of the sets~$\Cs{\P,j}$ by elements of~$\Gamma_{\infty}$ are permitted and a collection of such translations is chosen and applied.
The following statements of this section are meant to be understood \emph{``for all possible choices of~$\mathbb{S}$ and~$\mathbb{T}$''}.
We refer to \cite[Sections 6.2--6.7.1]{Pohl_diss} for the exact construction of these objects.

The proof of the following result is straightforward.
For the convenience of the reader we included it in the Appendix, as well as the proof of Proposition~\ref{PROP:finram} below.

\begin{thm}\label{THM:cuspexpSoB}
$\BrS_{\P}$ is a set of branches for~$\widehat{\Phi}$.
\end{thm}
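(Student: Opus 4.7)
The plan is to verify each of the seven defining properties (B1)--(B7) of Definition~\ref{DEF:setofbranches} individually for $\BrS_{\P}$, drawing throughout on the detailed analysis of the cusp expansion algorithm in \cite[Sections~6.2--6.7.1]{Pohl_diss}. Conceptually the proof is a bookkeeping exercise: the cusp expansion provides precisely the geometric and combinatorial data Pohl needed for her cross section, and the axioms of a set of branches are essentially an axiomatization of that data. No genuinely new arguments should be required.

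By construction each $\Cs{\P,j}$ consists of unit tangent vectors whose base points lie on (a $\Gamma_\infty$-translate of) a relevant isometric sphere and which all point \emph{away} from $\kund$. Hence (B2) and (B3) are immediate: the base closures are the complete geodesic segments extending those spheres, their geodesic-boundary points lie in $\wh\R\setminus\wh\R_{\st}$ because endpoints of isometric spheres are either parabolic fixed points of $\Gamma$ or not contained in $\Lambda(\Gamma)$, and the common half-space $\Plussp{j}$ is distinguished by construction. Property (B5) says that after the branches have been suitably enlarged by the $\Gamma_\infty$-translates encoded in the subscript $\mathbb{T}$ of \eqref{EQN:inPnota}, the map $\Cs{\P,j,\st}\ni\nu\mapsto(\gamma_\nu(+\infty),\gamma_\nu(-\infty))$ is a bijection onto $\Iset{\P,j}\times\Jset{\P,j}$; this is precisely the parametrization underlying Pohl's construction and can be read off directly.

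Properties (B1) and (B4) are completeness statements for the cross section. (B1) follows from the density of axes of hyperbolic elements in $\Lambda(\Gamma)\times\Lambda(\Gamma)$ (see~\cite{Eberlein_curvedMflds}), which guarantees a periodic geodesic through each branch once it is known that branches are non-empty and open in the relevant sense. (B4) translates the fact that the associated cross section in $\Orbi$ meets every geodesic with endpoints in $\wh\R_{\st}\times\wh\R_{\st}$, which is exactly how Pohl verifies the cross section property. For (B6) one uses the tessellation structure of the orbit $\Gamma\cdot\kund$ together with Lemma~\ref{LEM:vertexcycles}: two (translated) base geodesics can only meet along identified sides of the Ford fundamental domain, and in that case the pointing half-spaces swap, as (B6) requires.

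The principal obstacle is (B7), which encodes the first-return dynamics. The transition sets $\Trans{}{j}{k}$ must be read off from Pohl's \emph{next map}, which specifies, for each geodesic entering $\kund$ through a relevant side, which side it next exits through and which element of $\Gamma$ is applied to continue the itinerary. (B7a) then amounts to the observation that the first-return partition of $\Iset{\P,j}$ is disjoint and exhaustive; (B7b) is exactly the statement that the connecting arc between two branch base points avoids all intermediate translates of $\BrU_\P$, which is the very content of the word \emph{first} in first-return; and (B7c) is the symmetric backward statement obtained by inverting the next map. All three are contained in the combinatorial analysis of \cite[Sections~6.4--6.6]{Pohl_diss}; the delicate technical point, where both Condition~\eqref{condA} and the local finiteness~\eqref{ITEM:iso:locfin} of $\Iso{\Gamma}$ are used, is ensuring that the next map always lands on a branch rather than missing $\Gamma\cdot\BrU_\P$ entirely.
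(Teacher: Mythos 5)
Your overall strategy --- verifying (B1)--(B7) one by one from the structural properties that Pohl's construction provides --- is exactly the route the paper takes, and your treatments of (B1) and (B7) are in the right spirit (the paper builds the transition sets from the first-return data $t_+(\nu)$, $k_+(\nu)$, $g_+(\nu)$ of~\eqref{ITEM:cex:nextinter}, much as you describe). However, there is a concrete error in your description of the branches that undermines the verifications of (B2), (B3) and (B4) as you sketch them. By~\eqref{ITEM:cex:setstructure} the base set of each branch is a \emph{vertical} geodesic, $\base{\Cs{\P,j}}=\pr^{-1}(x_j)=(x_j,\infty)_\H$ with $x_j\in\mathscr{Q}$, i.e.\ $x_j$ is either a point of $\geo\kund$ or the center of a relevant isometric sphere; the base points do \emph{not} lie on the isometric spheres themselves, and the vectors do not ``point away from $\kund$'' but into one of the two half-planes bounded by that vertical line. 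Consequently (B2) is verified not by examining endpoints of isometric spheres, but by noting that the two endpoints of $\overline{\base{\Cs{\P,j}}}$ are $x_j\in\mathscr{Q}\subseteq\R\setminus\R_{\st}$ and $\infty$, the latter lying outside $\wh\R_{\st}$ because it is a parabolic fixed point.

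The misidentification also costs you the short argument for (B4). Since the endpoints are $x_j$ and $\infty$, one has $\wh\R_{\st}\subseteq\wh\R\setminus\{x_j,\infty\}=I_{\P,j}\cup J_{\P,j}$, and by~\eqref{ITEM:cex:uniqueness} the interval $J_{\P,j}$ equals $g\act I_{\P,k}$ for some $(k,g)\in A\times\Gamma$; so two translates already cover $\wh\R_{\st}$. Your proposed route through ``the cross section meets every geodesic with endpoints in $\wh\R_{\st}\times\wh\R_{\st}$'' is at best much heavier and at worst circular, since in~\cite{Pohl_Wab} the cross-section property is \emph{deduced from} the set-of-branches axioms rather than available beforehand. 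Two smaller points: Lemma~\ref{LEM:vertexcycles} plays no role in (B6) --- the paper obtains (B6) from the exactness of the tessellation in~\eqref{ITEM:cex:tessellation} together with~\eqref{ITEM:cex:uniqueness}; and Condition~\eqref{condA} is not invoked in the verification of (B7) --- the paper deliberately makes no use of it outside of Pohl's construction itself, the key input for (B7) being the finiteness of $t_\pm(\nu)$ from~\eqref{ITEM:cex:nextinter}.
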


In \cite[Section~3.9]{Pohl_Wab} we introduced and studied the notion of \emph{branch ramification}.
A set of branches~$\BrS$ is called \emph{infinitely ramified} if there exists a pair~$(j,k)\in A\times A$ such that~$\#\Trans{}{j}{k}=+\infty$.
If such a pair does not exist, then~$\BrS$ is called ~\emph{finitely ramified}.
This turned out to be an important distinction, since the strict transfer operator approach presumes certain sets of transformations of finite cardinality and these sets are directly informed by the transition sets~$\Trans{}{.}{.}$.

\begin{prop}\label{PROP:finram}
$\BrS_{\P}$ is finitely ramified.
\end{prop}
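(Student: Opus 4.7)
The plan is to bound each transition set $\Trans{}{j}{k}$ by separating the contributions into a compact part and a cuspidal part, using the local finiteness of $\Iso{\Gamma}$ for the former and the cusp expansion's built-in acceleration for the latter. I would first recall that, by the explicit construction in \cite[Sections~6.2--6.7.1]{Pohl_diss}, each $\base{\Cs{\P,j}}$ is (up to a $\Gamma_\infty$-translation determined by $\mathbb{T}$) a subarc of a relevant isometric sphere lying on $\partial\kund$. Transition condition (B7b) then characterizes $g \in \Trans{}{j}{k}$ geometrically: the translate $g \act \base{\Cs{\P,k}}$ must be visible from $\base{\Cs{\P,j}}$ within $\Plussp{j}$ through a geodesic that avoids $\Gamma \act \BrU_{\P}$, so that $g$ plays the role of a first-return group element.

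Next I would split $\Trans{}{j}{k}$ into two subsets according to whether the connecting geodesic from $\base{\Cs{\P,j}}$ to $g \act \base{\Cs{\P,k}}$ remains below a fixed horizontal level $\{\Ima z = h_0\}$ or enters the cuspidal half-strip above it (and analogously for any other cusp of $\Orbi$, after conjugating one at a time into $\infty$). For the first subset, the relevant portion of $\H$ is contained in a bounded Euclidean region, so \eqref{ITEM:iso:locfin} immediately limits the number of admissible $g$, since only finitely many relevant isometric sphere translates can realize a first return there. For the second subset, I would invoke the defining feature of the cusp expansion algorithm: the $\mathbb{T}$-choice of $\Gamma_\infty$-translates of branches is made precisely so that first returns into the cuspidal strip are realized by only finitely many group elements, despite the presence of the infinite parabolic orbit $\{t_\lambda^n : n \in \Z\}$. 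Combining the two bounds yields $\#\Trans{}{j}{k} < \infty$.

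The main obstacle is the careful verification of the cuspidal case. The geometric intuition---that the accelerated branches cut off infinite parabolic windings---is clear, but bookkeeping the exact correspondence between the $\mathbb{T}$-choice and the set of admissible transition elements near each cusp requires retracing the construction of \cite[Sections~6.2--6.7.1]{Pohl_diss} in detail and verifying that the cuspidal half-strip above some level $h_0$ is met by only finitely many $\Gamma$-orbits of branches at any first-return instant. Once this is established, the compact case via \eqref{ITEM:iso:locfin} reduces to a direct counting argument over the finitely many sides of the Ford fundamental domain~$\fund = \fund_{\infty}(r) \cap \kund$.
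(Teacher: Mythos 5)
Your plan does not close the argument, and it starts from an inaccurate picture of the branches. By \eqref{ITEM:cex:setstructure} each base set $\base{\Cs{\P,j}}=\pr^{-1}(x_j)=(x_j,\infty)_{\H}$ is a \emph{vertical} geodesic based at a point of $\mathscr{Q}$, not a subarc of a relevant isometric sphere on $\partial\kund$. This matters for your ``compact part'': the portion of $\Plusspp{j}$ lying below a horizontal level $\{\Ima z=h_0\}$ is not contained in any compact subset of $\H$ --- it reaches down to $\wh\R$, where translates of the branch bases (and the isometric spheres) may accumulate at limit points. Property \eqref{ITEM:iso:locfin} only controls intersections with compact subsets of $\H$, so it does not by itself bound the number of $g$ realizing a first return in that region; one would still have to rule out that a geodesic leaving $\Cs{\P,j}$ can have its \emph{first} crossing of $\Gamma\act\BrU_{\P}$ on arbitrarily small translates clustering near $\wh\R$. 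Your ``cuspidal part'' is explicitly deferred to retracing the construction of \cite[Sections~6.2--6.7.1]{Pohl_diss}, so neither half of the dichotomy is actually established; as written, the proposal is a proof plan with both of its load-bearing steps open.

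The missing idea is the tessellation $\mathscr{B}$ from \eqref{ITEM:cex:tessellation}, which makes the case split unnecessary. Since $\bigcup_{B\in\mathscr{B}}\partial B=\Gamma\act\base{\BrU_{\P}}$, the interior of the unique cell $B\in\mathscr{B}$ adjacent to $\base{\Cs{\P,j}}$ on the $\Plusspp{j}$-side contains no translate of the branch union, so for every $\nu\in\Cs{\P,j,\st}$ the first return point $\gamma_\nu(t_+(\nu))$, which exists by \eqref{ITEM:cex:nextinter}, must lie on $\partial B$. By (\ref{ITEM:cex:tessellation}\ref{ITEM:tessellation:structure}) the cell $B$ has only finitely many sides and each side carries exactly two branch copies $g\act\Cs{\P,k}$; together with the rigidity from \eqref{ITEM:cex:uniqueness} this leaves only finitely many possible pairs $(k_+(\nu),g_+(\nu))$, hence $\#\Trans{}{j}{k}<\infty$ for every $j,k\in A$. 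In particular, the cuspidal acceleration you appeal to is already encoded in the finite-sidedness of the cells of $\mathscr{B}$ and needs no separate verification.
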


In the remainder of this section we collate some properties which we will require in the upcoming proofs.
Denote by~$\mathrm{c}(\I)$ the center of an isometric sphere~$\I$ and define
\[
\mathscr{Q}\coloneqq\geo\kund\cup\defset{\mathrm{c}(\I)}{\I\in\REL{\Gamma}}\subseteq\R\setminus\R_{\st}\,.
\]
Then the following statements are easily obtained from~\cite{Pohl_diss}.
\begin{enumerate}[label=$(\mathrm{P}\arabic*)$,ref=$\mathrm{P}\arabic*$]
\item\label{ITEM:cex:finiteset}
The set~$\widetilde{\mathbb{B}}_{\mathbb{S},\mathbb{T}}$ is finite. Hence, the integer~$N$ does indeed exist and we may write~$A=\{1,\dots,N\}$ as before as well as~$\widetilde{\mathbb{B}}_{\mathbb{S},\mathbb{T}}=\{\widetilde{\mathcal{B}}_1,\dots,\widetilde{\mathcal{B}}_N\}$.
(\cite[Theorem~6.2.20]{Pohl_diss} and subsequent constructions)

\item\label{ITEM:cex:setstructure}
For each~$j\in A$ we have~$\Cs{\P,j}=\mathrm{CS}'(\widetilde{\mathcal{B}}_j)\subseteq\UTB\H$ and there exists a unique point~$x_j\in\mathscr{Q}$ such that~$\base{\Cs{\P,j}}=\pr^{-1}(x_j)=(x_j,\infty)_{\H}$.
(\cite[Propositions~6.4.11--6.4.13 and~6.6.21--6.6.23]{Pohl_diss})

\item\label{ITEM:cex:vectors}
All vectors of~$\Cs{\P,j}$ point into the same open half-space relative to~$(x_j,\infty)_{\H}$, which we may denote by~$\Plusspp{j}$.
We further set~$\Minusspp{j}\coloneqq\H\setminus\overline{\Plusspp{j}}$ and denote by~$I_{\P,j}$ and~$J_{\P,j}$ the largest open set contained in~$\geo\Plusspp{j}$ and~$\geo\Minusspp{j}$ respectively.
Then for every pair~$(x,y)\in I_{\P,j}\times J_{\P,j}$ there exists exactly one~$\nu\in\Cs{\P,j}$ such that~$(\gamma_\nu(+\infty),\gamma_\nu(-\infty))=(x,y)$.
(\cite[Lemma~6.7.9]{Pohl_diss})

\item\label{ITEM:cex:tessellation}
There exists an exact tessellation (in the sense of an essentially disjoint union) of~$\H$ by a set~$\mathscr{B}$ of convex closed hyperbolic polyhedra and (vertical) strips, i.e., sets of the form~$\pr^{-1}([a,b]_\R)$ for~$a,b\in\R$, such that
\begin{enumerate}[label=$\mathrm{(\alph*)}$,ref=$\mathrm{\alph*}$]
\item\label{ITEM:tessellation:structure}
for every~$B\in\mathscr{B}$ the boundary of~$B$ in~$\H$ decomposes disjointly into finitely many complete geodesic segments, called~\emph{sides} of~$B$, of which at most two are vertical, and for every side~$b$ of~$B$ there exist exactly two distinct pairs~$(j_1,g_1),(j_2,g_2)\in A\times\Gamma$ such that
\[
b=g_i\act\base{\Cs{\P,j_i}}\,,
\]
\item\label{ITEM:tessellation:strip}
each~$B\in\mathscr{B}$ is either a strip, i.e., of the form~$B=g\act\pr^{-1}(I)$ for some finite closed interval~$I\subseteq\R$ and~$g\in\Gamma$, or its boundary~$\geo B$ in~$\partial_{\geo}\H$ consists of finitely many points all of which are contained in~$\Gamma\act\mathscr{Q}$,
\item\label{ITEM:tessellation:triangle}
if~$B\in\mathscr{B}$ is neither a strip nor a hyperbolic triangle, then every side of~$B$ is of the form~$(g^{k}\act\infty,g^{k+1}\act\infty)_{\H}$, for some elliptic transformation~$g\in\Gamma$ and~$k\in\{0,\dots,\ord(g)-1\}$,
\item\label{ITEM:tessellation:include}
for each~$j\in A$ and every~$g\in\Gamma$ there exits~$B\in\mathscr{B}$ such that~$g\act\base{\Cs{\P,j}}$ is a side of~$B$.
\end{enumerate}
(\cite[Propositions~6.6.21--6.6.23 and 6.7.10, Corollaries~6.4.18~and~6.6.24, Construction~6.4.2, and Proposition~6.4.4]{Pohl_diss})

\item\label{ITEM:cex:uniqueness}
For all~$j\in A$ there exists a pair~$(k,g)\in A\times\Gamma$ such that~$\mathrm{H}_{\pm}^{\P}(j)=g\act\mathrm{H}_{\mp}^{\P}(k)$.
If~$\Plusspp{j}=g\act\Plusspp{k}$ for~$j,k\in A$ and~$g\in\Gamma$, then~$j=k$ and~$g=\id$.
(\cite[Proposition~6.7.10]{Pohl_diss})

\item\label{ITEM:cex:nextinter}
Let~$\nu\in\BrU_{\P}$ be such that~$(\gamma_\nu(+\infty),\gamma_\nu(-\infty))\subseteq\R_{\st}\times\R_{\st}$.
Then both the values
\begin{align*}
t_+(\nu)&\coloneqq\min\defset{t>0}{\gamma_\nu'(t)\in\Gamma\act\BrU_{\P}}
\intertext{and}
t_-(\nu)&\coloneqq\max\defset{t<0}{\gamma_\nu'(t)\in\Gamma\act\BrU_{\P}}
\end{align*}
are finite.
(\cite[Proposition~6.7.12]{Pohl_diss})
\item\label{ITEM:cex:summits}
For every~$\I\in\REL{\Gamma}$ for which its summit~$\summit{\I}$ is contained in~$\I\cap\partial\kund$, there exists~$(j,g)\in A\times\Gamma$ such that~$g\act\base{\Cs{\P,j}}$ is vertical and~$\summit{\I}\in g\act\base{\Cs{\P,j}}$.
Then either~$g\in\Gamma_\infty$ or~$\iso{g}\in\REL{\Gamma}$. (The first statement follows from the construction of the sets~$B\in\mathscr{B}$, see~\cite[Section 6.4]{Pohl_diss}. The second statement is easily derived from Lemma~\ref{LEM:centerofREL} below.)
\end{enumerate}

We further require the following observation.

\begin{lemma}\label{LEM:centerofREL}
Let~$j\in A$. For~$g\in\Gamma$ with~$g\act x_j=\infty$ we have~$\iso{g}\in\REL{\Gamma}$.
\end{lemma}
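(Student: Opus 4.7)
The plan is to first identify $x_j$ with the center of $\iso{g}$ and then case-split along the defining decomposition of~$\mathscr{Q}$. Since $g\act x_j=\infty$ is equivalent to $x_j=g^{-1}\act\infty$, property~\eqref{ITEM:iso:centers} immediately yields $x_j=\mathrm{c}(\iso{g})$; in particular $g\in\Gamma\setminus\Gamma_\infty$, so~$\iso{g}$ is a well-defined isometric sphere. By the definition of~$\mathscr{Q}$, either $x_j=\mathrm{c}(\I)$ for some $\I\in\REL{\Gamma}$, or $x_j\in\geo\kund$; in the former case, $\iso{g}$ and~$\I$ are concentric isometric spheres, so Lemma~\ref{LEM:isoconcentric} immediately forces $\iso{g}=\I\in\REL{\Gamma}$.

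The substantive case is $x_j\in\geo\kund$. Here one first notes that $g\act x_j=\infty$ together with the fact that $\Gamma_\infty$ is generated by a parabolic transformation force $x_j\in\Gamma\act\infty$ to be itself a parabolic fixed point---i.e., a cusp equivalent to the cusp at~$\infty$---lying on the ideal boundary of~$\kund$. The plan is then to verify that the unique isometric sphere centered at this cusp (which by Lemma~\ref{LEM:isoconcentric} must coincide with~$\iso{g}$) indeed contributes to $\partial\kund$. I would carry this out by transporting the configuration at $x_j$ over to the cusp at~$\infty$ through the action of~$g$: by~\eqref{ITEM:iso:eqn} one has $g\act\iso{g}=\iso{g^{-1}}$, which is centered at $g\act\infty$. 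Using the Ford-type boundary structure encoded in Lemma~\ref{LEM:vertexcycles}, together with the upper bound on radii~\eqref{ITEM:iso:boundradii} and the local finiteness~\eqref{ITEM:iso:locfin}, one checks that the summit $\summit{\iso{g^{-1}}}$ is not covered by the interior of any other isometric sphere. Hence $\iso{g^{-1}}\in\REL{\Gamma}$, and property~\eqref{ITEM:iso:relpart} transfers relevance back to~$\iso{g}$.

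The main obstacle will be precisely this last step of the substantive case. A priori, infinitely many isometric spheres---including those of all powers of the parabolic $p=g^{-1}t_\lambda g$ stabilizing~$x_j$---accumulate at the cusp~$x_j$ with shrinking radius, so the task is to certify that the single sphere centered \emph{exactly} at~$x_j$ is not hidden beneath any of them. I expect this to require a careful height comparison at the summit $\summit{\iso{g}}=x_j+\i/\abs{c}$, combined with the identification of the two relevant isometric spheres bounding~$\kund$ at the ideal vertex~$x_j$ via the side-pairing induced by~$p$, rather than a single soft argument.
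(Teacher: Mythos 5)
Your first paragraph is, in substance, the paper's entire proof: deduce $g\in\Gamma\setminus\Gamma_\infty$ (from $x_j\in\R$ and $g\act x_j=\infty$, which is the correct order of inference --- you need this \emph{before} you may speak of $\iso{g}$ or invoke~\eqref{ITEM:iso:centers}), identify $x_j$ with the center of $\iso{g}$, recall $x_j\in\mathscr{Q}$ from~\eqref{ITEM:cex:setstructure}, and conclude via Lemma~\ref{LEM:isoconcentric}. Up to that ordering quibble, this part is correct and coincides with the paper.

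The problem is your ``substantive case'' $x_j\in\geo\kund$: it cannot occur, and your proposed treatment of it is not a proof. It is vacuous because $x_j$ is the center of the isometric sphere $\iso{g}$ and therefore lies in the open interval $\underiso{g}=\bigl(\geo\intiso{g}\bigr)^{\circ}$ strictly beneath that sphere, whereas $\kund\subseteq\extiso{g}$ forces $\geo\kund\subseteq\geo\extiso{g}=\wh\R\setminus\underiso{g}$; hence $x_j\notin\geo\kund$, the dichotomy defining $\mathscr{Q}$ collapses to its second alternative, and the argument ends with your first paragraph. As written, however, you do not close this case: the decisive claim --- that the summit of $\iso{g^{-1}}$ is not hidden beneath the interiors of the infinitely many spheres accumulating at the cusp, and that this alone certifies relevance --- is precisely where such an argument would have to do real work (it is the sort of statement Condition~\eqref{condA} is imposed to control, and a summit lying on $\partial\kund$ does not by itself give two points of $\iso{g^{-1}}\cap\partial\kund$ without a further local-finiteness argument), and you explicitly defer it as an ``expected obstacle'' rather than prove it. Taken literally, the proposal therefore has a genuine gap; the repair is the one-line observation that the case is empty.
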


\begin{proof}
Since~$x_j\in\R$ for every~$j\in A$, we have~$g\in\Gamma\setminus\Gamma_\infty$.
Hence,~$\iso{g}$ is well-defined with center~$x_j=g^{-1}\act\infty$.
Because of~$x_j\in\mathscr{Q}$ (see~\eqref{ITEM:cex:setstructure}) the point~$x_j$ is also the center of some relevant isometric sphere.
Lemma~\ref{LEM:isoconcentric} now yields the assertion.
\end{proof}

We close this section with a technical observation that will come in handy in the upcoming constructions.
The cusp represented by the point at infinity imposes that the geometric structures induced by~$\Gamma$ are periodic in directions parallel (in the Euclidean sense) to~$\partial_{\geo}\H$ with period length equal to the cusp width of~$\pi(\infty)$.
This means that, for instance, we have
\begin{equation}\label{EQ:gammainftykund}
t_{\lambda}^q\act\kund = \kund\qquad\text{and}\qquad t_{\lambda}^q\act\bigcup\Iso{\Gamma}=\bigcup\Iso{\Gamma}
\end{equation}
for all~$q\in\Z$.
This periodicity is also inherited by~$\base{\BrU_{\P}}$, which is implied by the following result, which, in turn, is an immediate consequence of~\eqref{ITEM:cex:setstructure} and the tessellation property of~$\fund_{\infty}(.)$.

\begin{lemma}\label{LEM:CPperiod}
For every~$r\in\R$ there exist~$i_1,\dots,i_N\in\Z$ such that
\[
\bigcup_{j=1}^nt_{\lambda}^{i_j}\act\base{\Cs{j}}\subseteq\pr^{-1}([r,r+\lambda]_\R)\,.
\]
\end{lemma}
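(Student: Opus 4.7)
The plan is almost entirely bookkeeping, resting on a single structural input from the preceding list of properties. Namely, by~\eqref{ITEM:cex:setstructure}, for each $j\in A=\{1,\dots,N\}$ there exists a point $x_j\in\mathscr{Q}\subseteq\R$ such that
\[
\base{\Cs{\P,j}}=\pr^{-1}(x_j)=(x_j,\infty)_{\H}\,,
\]
i.e., each $\base{\Cs{\P,j}}$ is the vertical complete geodesic segment through $x_j$.

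Since $t_\lambda$ acts on $\H$ via $z\mapsto z+\lambda$, iteration yields $t_\lambda^i\act z=z+i\lambda$ for every $i\in\Z$, and consequently
\[
t_\lambda^i\act\base{\Cs{\P,j}}=(x_j+i\lambda,\infty)_{\H}=\pr^{-1}(x_j+i\lambda)\,.
\]
Hence, the requested inclusion $t_\lambda^{i_j}\act\base{\Cs{\P,j}}\subseteq\pr^{-1}([r,r+\lambda]_\R)$ is equivalent to the scalar condition $x_j+i_j\lambda\in[r,r+\lambda]$.

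The remaining step is to exhibit such integers $i_j$. This is the tessellation property of $\fund_\infty(\cdot)$ in disguise: since $\lambda>0$, the interval $\bigl[(r-x_j)/\lambda,\,(r-x_j)/\lambda+1\bigr]$ has length~$1$ and thus contains at least one integer $i_j\in\Z$, which by construction satisfies $r\leq x_j+i_j\lambda\leq r+\lambda$. Performing this choice independently for each $j\in\{1,\dots,N\}$ and taking the union over $j$ yields the claimed inclusion.

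There is no genuine obstacle here; the only subtlety is recognising that once verticality of $\base{\Cs{\P,j}}$ is granted by~\eqref{ITEM:cex:setstructure}, the assertion reduces to the elementary fact that every real number can be shifted into any fixed closed interval of length $\lambda$ by an integer multiple of $\lambda$, which in turn reflects that $\fund_\infty(r)=\pr^{-1}((r,r+\lambda)_\R)$ is a fundamental domain for $\Gamma_\infty=\langle t_\lambda\rangle$.
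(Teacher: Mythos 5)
Your proof is correct and follows exactly the route the paper intends: the lemma is stated there as an immediate consequence of~\eqref{ITEM:cex:setstructure} and the tessellation property of~$\fund_\infty(\cdot)$, which is precisely the reduction to the scalar condition $x_j+i_j\lambda\in[r,r+\lambda]_\R$ that you carry out. No gaps; the only cosmetic point is that the upper summation index $n$ in the statement should read $N$, as your argument implicitly assumes.
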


%==================================================================
\section{Sets of branches for orbisurfaces without cusps}\label{SEC:cuspless}
%==================================================================

We now, for the moment, abandon the orbisurfaces with cusps to study the situation when only funnels are present.
Thus, we assume that~$\Gamma$ is a geometrically finite, non-cocompact Fuchsian group that contains hyperbolic but no parabolic elements and for which the orbit space~$\Orbi=\quod{\Gamma}{\H}$ is of infinite volume.
By conjugation in~$\PSLR$ we can always achieve that
\begin{enumerate}[label=$(\mathrm{\star})$,ref=$\mathrm{\star}$]
\item\label{inftynei}
the ordinary set~$\Omega(\Gamma)=\partial_{\geo}\H\setminus\Lambda(\Gamma)$ contains a neighborhood of~$\infty$.
\end{enumerate}
Since the statements that follow are invariant under conjugation, we may assume that this is the case.
Then the stabilizer subgroup~$\Gamma_{\infty}=\Stab{\Gamma}{\infty}$ is trivial.
We define the set of isometric spheres~$\Iso{\Gamma}$, the set
\[
\kund=\bigcap_{\I\in\Iso{\Gamma}}\mathrm{ext\,}\I\,,
\]
as well as the sets ~$\REL{\Gamma}$ and~$\Gamma_{\RELL}$ of relevant isometric spheres and their generators as in Section~\ref{SEC:isoms}.
Then, as mentioned there,~$\kund$ is already a fundamental domain for~$\Gamma$ (see also Proposition~\ref{PROP:kundisfund} below).
We further remark that the assumptions imposed on~$\Gamma$ assure that~$\Iso{\Gamma}\ne\varnothing$.

As already described in the introduction, our strategy is as follows: We construct a new Fuchsian group~$\Gamma_{\wund}$, called the auxiliary group, from~$\Gamma$ via a cut-off procedure on the fundamental domain~$\kund$.
The group~$\Gamma_{\wund}$ then has a cusp represented by~$\infty$ and thus, under the additional assumption that~$\Gamma$ fulfills~\eqref{condA}, will allow for an application of the cusp expansion algorithm.
This, as we have seen in Theorem~\ref{THM:cuspexpSoB}, yields a cross section that emerges from a set of branches~$\BrS_{\wund}$.
We then return to~$\Gamma$ and see that~$\BrS_{\wund}$ induces a set of branches for the geodesic flow on~$\Orbi$ as well.

The following two statements regarding isometric spheres are specific for the situation without cusps.
\begin{enumerate}[label=$(\mathrm{ISO}\arabic*)$,ref=$\mathrm{ISO}\arabic*$]
\setcounter{enumi}{8}
\item\label{ITEM:iso:biject}
The map
\[
\Gamma\setminus\{\id\}\ni g\mapstoo\iso{g}\in\Iso{\Gamma}
\]
is a bijection. (Combine the first part of~\eqref{ITEM:iso:gener} with~$\Gamma_\infty=\{\id\}$.)

\item\label{ITEM:iso:slice}
There exist~$\alpha,\beta\in\R$,~$\alpha<\beta$, such that
\[
\bigcup\Iso{\Gamma}\subseteq\pr^{-1}([\alpha,\beta]_\R)\,.
\]
(See \cite[pp. 114--115]{Lehner}, or, equivalently, Remark~\ref{REM:finsides} below.)
\end{enumerate}

From these properties one easily infers that~$\kund$ is a fundamental domain for~$\Gamma$.
This is already known by virtue of the work of Ford~\cite{Ford_book,Ford} and Pohl~\cite{Pohl_isofunddom}, but we included a proof in the Appendix for the convenience of the reader.

\begin{prop}\label{PROP:kundisfund}
The set~$\kund$ is a convex, geometrically finite, exact fundamental polyhedron for~$\Gamma$ in~$\H$.
\end{prop}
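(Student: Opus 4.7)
The plan is to leverage the standard identity $\Ima(g\act z) = \Ima(z)/\abs{cz+d}^2$ for $g = \textbmat{a}{b}{c}{d} \in \Gamma \setminus \{\id\}$ and $z \in \H$. This identifies $\extiso{g}$, $\iso{g}$, and $\intiso{g}$ with the sets $\{\Ima(g\act z) < \Ima(z)\}$, $\{\Ima(g\act z) = \Ima(z)\}$, and $\{\Ima(g\act z) > \Ima(z)\}$ respectively, so that $\kund$ coincides with the set of $z \in \H$ whose imaginary part strictly exceeds $\Ima(g\act z)$ for every $g \in \Gamma \setminus \{\id\}$. Convexity of $\kund$ then follows at once, each $\extiso{g}$ being a geodesically convex open half-space of $\H$, as does openness, via the local finiteness of $\Iso{\Gamma}$ established in~\eqref{ITEM:iso:locfin}; non-emptiness is seen from~\eqref{ITEM:iso:boundradii}, since for any point of sufficiently large imaginary part all isometric spheres lie strictly below and the point is therefore contained in $\kund$.

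Next I would verify the two defining properties of a fundamental domain. For the covering $\Gamma\act\overline{\kund} = \H$, I fix $z \in \H$ and observe that the orbit $\Gamma\act z$ is discrete in $\H$ with accumulation points in $\overline{\H}^\geo$ contained in $\Lambda(\Gamma)$; by assumption~\eqref{inftynei} a neighborhood of $\infty$ avoids $\Lambda(\Gamma)$, so imaginary parts along the orbit are bounded above and the supremum is attained at some $z_0 \in \Gamma\act z$. Together with the local finiteness of $\Iso{\Gamma}$, the height characterization then places $z_0 \in \overline{\kund}$. Disjointness of translates is immediate: assuming $z \in \kund \cap g\act\kund$ for some $g \neq \id$ and applying the characterization once to $z$ and once to $g^{-1}\act z$ yields the incompatible inequalities $\Ima(g^{-1}\act z) < \Ima(z)$ and $\Ima(z) < \Ima(g^{-1}\act z)$.

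Exactness, i.e., the existence of a side-pairing by elements of $\Gamma$, follows by combining~\eqref{ITEM:iso:eqn} with~\eqref{ITEM:iso:relpart}: each generator $g \in \Gamma_{\RELL}$ sends the boundary arc $\overline{\iso{g}}^\geo \cap \partial_\geo\kund$ bijectively onto the matching arc $\overline{\iso{g^{-1}}}^\geo \cap \partial_\geo\kund$, and the uniqueness of the generator granted by~\eqref{ITEM:iso:biject} ensures that this pairing is well-defined.

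The main obstacle I anticipate is geometric finiteness, that is, finiteness of $\REL{\Gamma}$. Properties~\eqref{ITEM:iso:slice} and~\eqref{ITEM:iso:boundradii} confine $\bigcup\Iso{\Gamma}$ to a Euclidean rectangle $[\alpha,\beta]_\R \times (0,R]$ inside $\H$, and with the preceding steps establishing $\kund$ as an exact convex fundamental polyhedron for the geometrically finite Fuchsian group $\Gamma$, I would invoke the classical theorem (see, e.g., Ratcliffe's book) that any such polyhedron for a geometrically finite Fuchsian group has only finitely many sides, yielding $\abs{\REL{\Gamma}} < \infty$ and completing the argument.
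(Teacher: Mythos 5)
Your proposal is correct and follows essentially the same route as the paper: convexity from the convexity of the exteriors $\extiso{g}$, exactness from the side-pairing induced by~\eqref{ITEM:iso:eqn} and~\eqref{ITEM:iso:relpart} together with~\eqref{ITEM:iso:biject}, and geometric finiteness of~$\kund$ from the geometric finiteness of~$\Gamma$ via Ratcliffe's theorem on convex fundamental polyhedra. The only cosmetic difference is that where the paper delegates the fundamental-region property to a citation (\cite[Corollary~3.20]{Pohl_isofunddom}), you write out the underlying height-maximization argument explicitly, correctly noting that local finiteness~\eqref{ITEM:iso:locfin} is needed to pass from the orbit maximizer lying in $\bigcap_g\overline{\extiso{g}}$ to membership in~$\overline{\kund}$.
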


\begin{remark}\label{REM:finsides}
In~\cite[Section~12.4]{Ratcliffe} it is shown that, in two dimensions, a convex polyhedron is geometrically finite if and only if it has a finite number of sides.
For~$g\in\Gamma_{\RELL}$ denote by~$b_g$ the geodesic segment from~\eqref{ITEM:iso:relpart}.
Since~$g\mapsto b_g$ is a bijection between~$\Gamma_{\RELL}$ and the set of sides of~$\kund$ by~\eqref{ITEM:iso:biject}, Proposition~\ref{PROP:kundisfund} together with~\eqref{ITEM:iso:biject} implies that the sets~$\Gamma_{\RELL}$ and~$\REL{\Gamma}$ are both finite.
\end{remark}

We now briefly recall the concept of (finite and infinite) vertex cycles and angle sums.
For a comprehensive analysis of this subject we refer the reader to~\cite{Lehner}.

Let~$\fund$ be an exact, convex fundamental polyhedron for some geometrically finite Fuchsian group~$\Lambda$.
A \emph{finite vertex} of $\fund$ is a point $v\in\H$ that is the common endpoint of two distinct sides of $\fund$.
(A \emph{side} of~$\fund$ is a geodesic segment of positive length of the form~$\overline{\fund}\cap g\act\overline{\fund}$, for~$g\in\Gamma$.)
Denote by $V_{\fund}$ the set of finite vertices of $\fund$.
For $v\in V_{\fund}$ its \emph{cycle} $C(v)$ is defined as the intersection of $\overline{\fund}^{\geo}$ with $\Gamma\act v$.
The cycle $C(v)$ is a finite set and consists solely of vertices of $\fund$, say $C(v)=\{v_1,\dots,v_k\}$.
For each $j$ denote by $\theta_j$ the angle which $\fund$ subtends at $v_j$.
We define the \emph{angle sum} of $C(v)$ to be
\[
\theta(C(v))\coloneqq\sum_{j=1}^k\theta_j.
\]
Combination of Proposition~\ref{PROP:kundisfund} with~\cite[Theorem~IV.4C]{Lehner} yields the following result.

\begin{cor}\label{COR:anglesum}
For all $v\in V_{\kund}$ there exists~$\omega=\omega(v)\in\N$ such that
\[
\theta(C(v))=\frac{2\pi}{\omega}.
\]
\end{cor}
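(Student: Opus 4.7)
The plan is to deduce this statement directly from a classical theorem on fundamental polyhedra for Fuchsian groups. By Proposition~\ref{PROP:kundisfund}, the common exterior~$\kund$ is a convex, geometrically finite, exact fundamental polyhedron for~$\Gamma$ in~$\H$. Consequently, the hypotheses of~\cite[Theorem~IV.4C]{Lehner} are satisfied, and that theorem asserts precisely that the angle sum along every finite vertex cycle of such a fundamental polyhedron is of the form~$2\pi/\omega$ for some~$\omega\in\N$.

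Before invoking the cited result, I would briefly reconcile definitions: one has to check that the notions of \emph{side}, \emph{vertex}, and \emph{vertex cycle} employed in~\cite{Lehner} agree with those recalled immediately above the corollary. Exactness of~$\kund$ ensures that each side (a geodesic segment of positive length of the form~$\overline{\kund}\cap g\act\overline{\kund}$) is paired to a unique side-pairing transformation~$g\in\Gamma$, and the local finiteness of the isometric spheres from~\eqref{ITEM:iso:locfin} (together with the properly discontinuous action of~$\Gamma$ on~$\H$) guarantees that each cycle~$C(v)$ is finite and consists only of finite vertices.

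Geometrically, the integer~$\omega=\omega(v)$ produced by the theorem coincides with the order of the stabilizer~$\Stab{\Gamma}{v}$: the~$\Gamma$-translates of~$\kund$ meeting the cycle~$C(v)$ tile a neighborhood of~$v$ in~$\H$, and the quotient map~$\pi$ is, near~$\pi(v)\in\Orbi$, an~$\omega$-fold branched cover. Hence the sum of the subtended angles at representatives of one cycle wraps~$\omega$ times around~$\pi(v)$, so that~$\theta(C(v))\cdot\omega=2\pi$. Since this step amounts to verifying the hypotheses of~\cite[Theorem~IV.4C]{Lehner} and quoting it, there is no serious obstacle; the only mild subtlety is the bookkeeping that correctly matches each vertex in~$C(v)$ with its angular contribution, which is entirely standard for convex, exact fundamental polyhedra.
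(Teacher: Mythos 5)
Your proposal matches the paper's own derivation exactly: the corollary is obtained by combining Proposition~\ref{PROP:kundisfund} with~\cite[Theorem~IV.4C]{Lehner}, which is precisely what the paper does (it states this combination as the entire justification). The additional remarks on reconciling definitions and on the interpretation of~$\omega$ as the order of the stabilizer are correct and harmless elaboration.
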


An~\emph{infinite vertex} of~$\fund$ is the common endpoint of two distinct sides of~$\fund$ contained in~$\partial_{\geo}\H$.
We denote the set of infinite vertices of~$\fund$ by~$V_{\fund}^{\geo}$.
The cycles~$C^{\geo}(.)$ in~$V_{\fund}^{\geo}$ are defined analogously as in the finite case and consist of finitely many infinite vertices of~$\fund$.
Concatenating the side-pairing transformations associated to the vertices in the vertex cycle~$C^{\geo}(v)$ in the order imposed by the cycle yields one of the two~\emph{cycle transformations}~$c_v,c'_v$, where we have~$c'_v=c_v^{-1}$.
This construction is also possible for finite vertices but we only require it here in the infinite case.
Note that~$V_{\kund}^{\geo}=\varnothing.$

%----------------------------------------------------------------------------------------------------------------------------
\subsection*{An auxiliary group.}
%----------------------------------------------------------------------------------------------------------------------------

We now construct the Fuchsian group to which we can apply the cusp expansion algorithm.
Let~$\alpha\in\R$ be the largest and~$\beta\in\R$ be the smallest number for which
\[
\partial\kund\subseteq\pr^{-1}([\alpha,\beta]_\R)
\]
(see~\eqref{ITEM:iso:slice}).
We fix~$\alpha'\in(-\infty,\alpha)_\R$ and~$\beta'\in(\beta,+\infty)_\R$, and set
\begin{align}\label{EQDEF:lambdachoice}
\lambda\coloneqq\beta'-\alpha'\,.
\end{align}
As before we set~$t_{\lambda}\coloneqq\begin{bsmallmatrix}1&\lambda\\0&1\end{bsmallmatrix}\in\PSLR$.
Now we define
\begin{align}\label{EQDEF:wund}
\wund\coloneqq\kund\cap\pr^{-1}((\alpha',\beta')_\R)\subseteq\H
\end{align}
The set~$\wund$ will play the role of the fundamental domain for our new group of isometries.
We can read off several properties of~$\wund$ from Proposition~\ref{PROP:kundisfund} already:
It is again a convex polyhedron in~$\H$ whose set of sides is given by
\[
\defset{b_g}{g\in\Gamma_{\RELL}}\cup\{[\alpha',\infty]_{\H},[\beta',\infty]_{\H}\}\,,
\]
where~$b_g$ denotes the geodesic segment from~\eqref{ITEM:iso:relpart}.
From Remark~\ref{REM:finsides} we infer that this is a finite set and hence,~$\wund$ is geometrically finite.
It further inherits the side-pairing from~$\kund$, extended by the transformations~$t_{\lambda}^{\pm1}$ on its vertical sides.
Again, the side-pairing is exact.

Recall the notion of Poincar\'e polygons in the form of~\cite{Maskit}.

\begin{prop}\label{PROP:wundispopoly}
The domain~$\wund$ is a Poincar\'e polygon.
\end{prop}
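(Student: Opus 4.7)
The plan is to verify Maskit's defining properties of a Poincar\'e polygon one by one, leaning on what has already been established about $\kund$ and exploiting the careful choice of $\alpha',\beta'$ and $\lambda$.

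First I would establish the convexity and finite-sidedness of $\wund$: since $\wund=\kund\cap\pr^{-1}((\alpha',\beta')_\R)$ is the intersection of two convex sets (with $\kund$ convex by Proposition~\ref{PROP:kundisfund}), $\wund$ itself is convex. Its sides will be those sides $b_g$ ($g\in\Gamma_{\RELL}$) of $\kund$ inherited unchanged, together with the two new vertical segments $[\alpha',\infty]_\H$ and $[\beta',\infty]_\H$. The key observation here is that by the choice $\alpha'<\alpha\le\beta<\beta'$ together with property~\eqref{ITEM:iso:slice}, the vertical cuts at $\alpha'$ and $\beta'$ lie outside $\pr^{-1}([\alpha,\beta]_\R)\supseteq\bigcup\Iso\Gamma$, so they neither meet nor subdivide any $b_g$. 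Finiteness is then immediate from Remark~\ref{REM:finsides}.

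Next I would describe the side-pairing and verify exactness. The pairings of $\kund$ extend tautologically: for $g\in\Gamma_{\RELL}$, the transformation $g$ pairs $b_g$ with $b_{g^{-1}}$ by virtue of~\eqref{ITEM:iso:eqn} and~\eqref{ITEM:iso:relpart}. The two new vertical sides are paired by $t_\lambda^{\pm1}$: by the definition $\lambda=\beta'-\alpha'$ in~\eqref{EQDEF:lambdachoice}, the parabolic map $t_\lambda$ translates $[\alpha',\infty]_\H$ onto $[\beta',\infty]_\H$. The pairings come in inverse pairs and are exact, so the side-pairing datum on $\wund$ is well-formed.

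For the finite vertex cycle condition I would argue that the finite vertices of $\wund$ coincide with those of $\kund$. Indeed, no new finite vertex is created, since the two vertical sides have their only endpoints on $\partial_\geo\H$ (at $\alpha',\beta',\infty$) and intersect no $b_g$ by the first step. Consequently every finite cycle $C(v)$ for $\wund$ is the corresponding cycle for $\kund$, and Corollary~\ref{COR:anglesum} supplies the required angle-sum condition $\theta(C(v))=2\pi/\omega(v)$.

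The main obstacle, and the last thing to check, is the condition at the new infinite vertices. At $\infty$, the only sides meeting there are the two new vertical ones, paired by $t_\lambda$, so the cycle at $\infty$ is $\{\infty\}$ with cycle transformation $t_\lambda$, which is parabolic fixing $\infty$ — exactly Maskit's parabolic cycle condition. The points $\alpha',\beta'$ on $\partial_\geo\H$ bound free boundary arcs (inside $\Omega(\Gamma)$, by \eqref{inftynei}, since $\alpha'<\alpha$ and $\beta'>\beta$) and are identified by $t_\lambda$ into a single cycle consistent with the funnel of $\Orbi$ around $\infty$. The infinite vertex cycles already present in $\kund$ live entirely in $[\alpha,\beta]_\R$ and involve only sides $b_g$, so they are inherited untouched. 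Once these observations are assembled, all clauses of Maskit's definition are fulfilled and $\wund$ is a Poincar\'e polygon.
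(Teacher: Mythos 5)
Your proposal is correct and follows essentially the same route as the paper: verify Maskit's two conditions by observing that $V_{\wund}=V_{\kund}$ (so Corollary~\ref{COR:anglesum} gives the angle-sum condition) and that the only infinite vertex is $\infty$, whose cycle transformations are $t_{\lambda}^{\pm1}$ and hence parabolic. The additional material on convexity, finite-sidedness, and the extended exact side-pairing is what the paper establishes in the paragraph preceding the proposition, so nothing is missing.
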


\begin{proof}
Following~\cite{Maskit} it remains to show that~$\wund$ has the following two properties:
\begin{enumerate}[label=$\mathrm{(\roman*)}$, ref=$\mathrm{\roman*}$]
\item\label{vertexcondition} For each~$v\in V_{\wund}$ there exists an integer~$\omega\ne0$ so that~$\theta(C(v))=\frac{2\pi}{\omega}$.

\item\label{paraboliccuspcondition} For each~$v\in V_{\wund}^{\geo}$ the cycle transformations~$c_v,c'_v$ are parabolic.
\end{enumerate}
We have~$V_{\wund}= V_{\kund}$.
Thus,~\eqref{vertexcondition} follows from Corollary~\ref{COR:anglesum}.
The only infinite vertex of~$\wund$ is~$v=\infty$, for which~$\{c_v,c'_v\}=\{t_{\lambda},t_{\lambda}^{-1}\}$.
This shows~\eqref{paraboliccuspcondition}.
\end{proof}

Proposition~\ref{PROP:wundispopoly} combined with the Poincar\'e theorem for fundamental polygons (in the form of \cite{Maskit}) implies that $\wund$ is a fundamental domain for the group generated by its side-pairing transformations, which we denote by $\Gamma_{\wund}$, and that $\Gamma_{\wund}$ is Fuchsian.
Since $\wund$ is geometrically finite, convex, and exact, the group $\Gamma_{\wund}$ is geometrically finite.
Denote by~$\Orbi_{\wund}$ the orbisurface arising as the orbit space of~$\Gamma_{\wund}$.
We see that~$\Orbi_{\wund}$ has a single cusp represented by~$\infty$.
All other hyperbolic ends of~$\Orbi_{\wund}$ match those of~$\Orbi$ in number and relative location.

Denote by~$\wh\R_{\st,\wund}$ the set~$\wh\R_{\st}$ with respect to~$\Gamma_\wund$, that is
\[
\wh\R_{\st,\wund}\coloneqq\Lambda(\Gamma_\wund)\setminus\Gamma_\wund\act\infty\,.
\]
The group~$\Gamma_\wund$ can be viewed as arising from~$\Gamma$ via the addition of the parabolic transformation~$t_\lambda$ to the set of generators.
Hence,~$\Gamma$ is a non-trivial subgroup of~$\Gamma_\wund$.
From this the following result is immediate.

\begin{cor}\label{COR:limitsetinclude}
We have~$\Lambda(\Gamma)\subseteq\Lambda(\Gamma_\wund)$,~$\wh\R_{\st}\subseteq\wh\R_{\st,\wund}$, and~$E(\Orbi)\subseteq E(\Orbi_\wund)$.
\end{cor}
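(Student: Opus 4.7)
My plan is to treat the three inclusions separately, starting with the two that are genuinely immediate. For $\Lambda(\Gamma)\subseteq\Lambda(\Gamma_\wund)$ I would just note that $\Gamma\subseteq\Gamma_\wund$ yields $\Gamma\act z\subseteq\Gamma_\wund\act z$ for every $z\in\H$; since the limit set coincides with the set of accumulation points on $\partial_{\geo}\H$ of any such orbit, the containment is preserved. For $E(\Orbi)\subseteq E(\Orbi_\wund)$ I would simply observe from the definition \eqref{EQNDEF:EX} that every hyperbolic $h\in\Gamma$ remains a hyperbolic element of $\Gamma_\wund$ with the same attracting and repelling fixed points, so each pair $(\fixp{+}{h},\fixp{-}{h})$ lies in both sets.

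The actual work is in the middle inclusion $\wh\R_{\st}\subseteq\wh\R_{\st,\wund}$. Since $\Gamma$ contains no parabolic elements, we have $\wh\R_{\st}=\Lambda(\Gamma)$, and together with the first inclusion the claim reduces to $\Lambda(\Gamma)\cap\Gamma_\wund\act\infty=\varnothing$. Here $\Gamma_\wund\act\infty$ coincides with the full set of parabolic fixed points of $\Gamma_\wund$: by construction $\Orbi_\wund$ has precisely one cusp, represented by $\infty$, so every parabolic element of $\Gamma_\wund$ is conjugate to a power of $t_\lambda$ and has its fixed point in $\Gamma_\wund\act\infty$. The remaining obstacle is thus to rule out that a limit point of $\Gamma$ coincides with one of these newly introduced parabolic fixed points of $\Gamma_\wund$.

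To settle this point I would invoke the standard dichotomy for geometrically finite Fuchsian groups: every limit point is either a conical (radial) limit point or a bounded parabolic fixed point, and these two classes are disjoint. Since $\Gamma$ is geometrically finite and free of parabolic elements, every point of $\Lambda(\Gamma)$ is a conical limit point of $\Gamma$, and the inclusion $\Gamma\subseteq\Gamma_\wund$ promotes conical limit points of $\Gamma$ to conical limit points of $\Gamma_\wund$, as an approximating sequence in $\Gamma$ is \emph{a fortiori} a sequence in $\Gamma_\wund$. On the other hand, no parabolic fixed point of $\Gamma_\wund$ can be a conical limit point of $\Gamma_\wund$. Hence $\Lambda(\Gamma)$ and $\Gamma_\wund\act\infty$ are disjoint subsets of $\Lambda(\Gamma_\wund)$, which closes the argument.
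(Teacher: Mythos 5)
Your proposal is correct. The paper itself offers no argument beyond the preceding remark that $\Gamma$ is a subgroup of $\Gamma_\wund$, from which it declares the corollary immediate; for the first and third inclusions this is indeed all that is needed, and your treatment of them coincides with that. Where you genuinely go further is the middle inclusion: you correctly isolate that, since $\Gamma$ has no parabolic elements, $\wh\R_{\st}=\Lambda(\Gamma)$ while $\wh\R_{\st,\wund}=\Lambda(\Gamma_\wund)\setminus\Gamma_\wund\act\infty$, so that beyond $\Lambda(\Gamma)\subseteq\Lambda(\Gamma_\wund)$ one must also check $\Lambda(\Gamma)\cap\Gamma_\wund\act\infty=\varnothing$ --- a point the subgroup relation alone does not settle, since $\Lambda(\Gamma)$ is only $\Gamma$-invariant and one cannot simply move a putative point $g\act\infty\in\Lambda(\Gamma)$ back to $\infty$ inside $\Lambda(\Gamma)$. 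Your resolution via the conical/bounded-parabolic dichotomy is sound: geometric finiteness and the absence of parabolics make every point of $\Lambda(\Gamma)$ a conical limit point of $\Gamma$, the conical property persists under passage to the overgroup $\Gamma_\wund$, and a parabolic fixed point of a discrete group is never one of its conical limit points. The net effect is that your write-up supplies the one verification the paper leaves implicit, at the modest cost of importing a standard external fact about limit points of geometrically finite Fuchsian groups.
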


In order to apply the cusp expansion algorithm to~$\Gamma_\wund$ we require the common exterior w.r.t.~$\Gamma_\wund$, i.e., the set
\begin{equation}\label{EQDEF:kundwund}
\kund_\wund\coloneqq\kund_{\Gamma_\wund}=\bigcap_{g\in\Gamma_\wund\setminus\Gamma_{\wund,\infty}}\extiso{g}\,,
\end{equation}
where~$\Gamma_{\wund,\infty}$ denotes the stabilizer of~$\infty$ in~$\Gamma_\wund$.
If~$\fund_\wund$ is a Ford fundamental domain for~$\Gamma_\wund$, then we can re-obtain~$\kund_\wund$ by means of the~$\Gamma_{\wund,\infty}$-invariance of~$\kund_\wund$ (see~\eqref{EQ:gammainftykund}).
To be more precise, from~$\fund_\wund=\kund_\wund\cap\fund_{\wund,\infty}$, for~$\fund_{\wund,\infty}$ a fundamental domain for~$\Gamma_{\wund,\infty}$, and the tessellation property we obtain
\[
\kund_\wund=\bigl(\,\Gamma_{\wund,\infty}\,\act\,\overline{\fund_\wund}\,\bigr)^\circ\,.
\]
We show that~$\wund$ is a Ford fundamental domain for~$\Gamma_\wund$, starting with the verification that~$\lambda$ from~\eqref{EQDEF:lambdachoice} is the cusp width of the one cusp of~$\Gamma_\wund$.

\begin{lemma}\label{LEM:Stabinwund}
The stabilizer~$\Gamma_{\wund,\infty}$ of~$\infty$ in~$\Gamma_{\wund}$ is generated by~$t_{\lambda}$.
\end{lemma}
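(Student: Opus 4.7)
The plan is to verify both inclusions. One direction is immediate: by construction $t_\lambda$ is the side-pairing transformation identifying the two vertical sides $[\alpha',\infty]_\H$ and $[\beta',\infty]_\H$ of $\wund$, so $t_\lambda\in\Gamma_\wund$ and $\langle t_\lambda\rangle\subseteq\Gamma_{\wund,\infty}$. For the converse, since $\pi(\infty)$ is a cusp of $\Orbi_\wund$, the group $\Gamma_{\wund,\infty}$ is a nontrivial discrete subgroup of the stabilizer of $\infty$ in $\PSLR$ containing a parabolic translation. Any such subgroup is infinite cyclic and purely translational---if it also contained a hyperbolic dilation $z\mapsto\rho z$, then conjugating $t_\mu$ by its powers would produce translations $t_{\mu\rho^{-n}}$ of arbitrarily small length, contradicting discreteness. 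Thus $\Gamma_{\wund,\infty}=\langle t_\mu\rangle$ with $\lambda=k\mu$ for some $k\in\N$, and the task reduces to proving $k=1$.

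The geometric engine of the argument is the observation that for $R$ sufficiently large the open strip
\[
S_R \;:=\; \{z\in\H : \alpha'<\Rea z<\beta',\ \Ima z > R\}
\]
is entirely contained in $\wund$. This I would establish by combining~\eqref{ITEM:iso:boundradii} (uniformly bounded radii of isometric spheres) with~\eqref{ITEM:iso:slice} ($\bigcup\Iso{\Gamma}\subseteq\pr^{-1}([\alpha,\beta]_\R)$) and the deliberate choice $\alpha'<\alpha\le\beta<\beta'$: points of $S_R$ with $\Rea z\notin[\alpha,\beta]$ automatically lie outside every isometric sphere, while those with $\Rea z\in[\alpha,\beta]$ do so as soon as $R$ exceeds the supremum of radii; either way such $z$ lies in $\kund\cap\pr^{-1}((\alpha',\beta')_\R)=\wund$.

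Finally, if $k\ge 2$ then $0<\mu<\lambda$, so $t_\mu\act S_R$ shares with $S_R$ the nonempty open strip $\{\alpha'+\mu<\Rea z<\beta',\ \Ima z>R\}$. But $t_\mu\in\Gamma_\wund$ with $t_\mu\ne\id$, and $\wund$ is a genuine fundamental polyhedron by Proposition~\ref{PROP:wundispopoly} and the Poincar\'e polygon theorem, so distinct $\Gamma_\wund$-translates of $\wund$ meet only on $\partial\wund$ and in particular have empty interior intersection. This contradiction forces $k=1$.

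The main obstacle I expect is the geometric inclusion $S_R\subseteq\wund$, where the careful choice of $\alpha',\beta'$ relative to the width-bounds on $\bigcup\Iso{\Gamma}$ enters essentially. Everything surrounding it is bookkeeping on fundamental domains, and one could alternatively invoke the cusp portion of the Poincar\'e polygon theorem directly, which furnishes that the cycle transformation at an infinite vertex generates its stabilizer; the fundamental-domain comparison sketched above is, however, more self-contained within the framework already developed.
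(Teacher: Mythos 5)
Your proposal is correct and follows essentially the same route as the paper: both establish that $\Gamma_{\wund,\infty}$ is cyclic generated by some translation $t_\kappa$ with $\lambda$ a positive integer multiple of $\kappa$, then use~\eqref{ITEM:iso:boundradii} (the paper) respectively \eqref{ITEM:iso:boundradii} together with~\eqref{ITEM:iso:slice} (you) to place a high horizontal strip inside $\wund$, and derive a contradiction with the fundamental-domain property of $\wund$ if the generator were a strictly shorter translation. The only cosmetic difference is that the paper exhibits a single point $z^*$ whose $t_\kappa$-translate stays in the strip, whereas you note the open overlap of the translated strips; these are the same argument.
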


\begin{proof}
Since~$t_{\lambda}$ is a side-pairing transformation for~$\wund$, we have~$t_{\lambda}\in\Gamma_{\wund}$.
Hence,~$\infty$ is a parabolic fixed point and every transformation in~$\Gamma_{\wund}$ fixing~$\infty$ must be parabolic.
Therefore, every non-identity element in~$\Gamma_{\wund,\infty}$ has the same fixed point set, which implies that~$\Gamma_{\wund,\infty}$ is cyclic.
Hence,~$\Gamma_{\wund,\infty}$ is generated by some element~$t_{\kappa}=\begin{bsmallmatrix}1&\kappa\\0&1\end{bsmallmatrix}$ with~$\abs{\kappa}\leq\lambda$.
Suppose~$\abs{\kappa}<\lambda$.
W.l.o.g. we may assume~$\kappa>0$.
Because of~\eqref{ITEM:iso:boundradii} and non-vertical sides of~$\wund$ coinciding with those of~$\kund$, there exists~$M>0$ such that
\[
\wund_{M}\coloneqq\defset{z\in\pr^{-1}((\alpha',\beta')_\R)}{\Ima{z}\geq M}\subseteq\wund\,.
\]
Let~$\eta\coloneqq\tfrac{\lambda-\kappa}{2}$ and~$y>M$.
Then~$z^*\coloneqq\alpha'+\eta+\i y\in\wund_{M}$.
But also
\begin{align*}
t_{\kappa}\act z^*&=\alpha'+\eta+\kappa+\i y=\alpha'+\lambda-\eta+\i y=\beta'-\eta+\i y\in\wund_{M}\,,
\end{align*}
which contradicts~$\wund$ being a fundamental domain for~$\Gamma_{\wund}$.
Thus,~$\abs{\kappa}=\lambda$, which completes the proof.
\end{proof}

Because of Lemma~\ref{LEM:Stabinwund}, the strip
\[
\fund_{\wund,\infty}\coloneqq\pr^{-1}((\alpha',\beta')_\R)
\]
is a fundamental domain for~$\Gamma_{\wund,\infty}$ in~$\H$.
As before we denote by~$\REL{\Gamma_{\wund}}$ the set of relevant isometric spheres of~$\Gamma_{\wund}$. Additionally, we denote by~$\RELL_{\wund}$ the subset of isometric spheres of~$\Gamma_{\wund}$ that contribute non-trivially to the boundary of~$\wund$.

\begin{lemma}\label{LEM:RelRell}
$\REL{\Gamma}=\RELL_{\wund}\,.$
\end{lemma}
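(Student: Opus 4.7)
The plan is to establish $\REL{\Gamma}=\RELL_{\wund}$ by proving both inclusions separately, leveraging the explicit construction $\wund=\kund\cap\pr^{-1}((\alpha',\beta')_{\R})$, the description of the sides of $\wund$ given in the paragraph after~\eqref{EQDEF:wund}, and the convexity supplied by Proposition~\ref{PROP:kundisfund}.

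For $\REL{\Gamma}\subseteq\RELL_{\wund}$, I would take $\I=\iso{g}$ for some $g\in\Gamma_{\RELL}$. First, by Lemma~\ref{LEM:Stabinwund} we have $\Gamma_{\wund,\infty}=\langle t_{\lambda}\rangle$, while~\eqref{inftynei} forces $\Gamma_{\infty}=\{\id\}$; consequently $\Gamma\cap\Gamma_{\wund,\infty}=\{\id\}$, and the non-trivial element $g$ therefore belongs to $\Gamma_{\wund}\setminus\Gamma_{\wund,\infty}$. Hence $\iso{g}$ remains an isometric sphere of $\Gamma_{\wund}$. Property~\eqref{ITEM:iso:relpart} then produces a genuine arc $[\xi_{1},\xi_{2}]_{\H}=\overline{\iso{g}}^{\geo}\cap\partial_{\geo}\kund$, which by the choice of $\alpha,\beta$ is contained in $\pr^{-1}([\alpha,\beta]_{\R})\subseteq\pr^{-1}((\alpha',\beta')_{\R})$, and thus also forms part of $\partial\wund$. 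This yields $\iso{g}\in\RELL_{\wund}$.

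For $\RELL_{\wund}\subseteq\REL{\Gamma}$, I would start with $\I=\iso{h}$ for some $h\in\Gamma_{\wund}\setminus\Gamma_{\wund,\infty}$ and two distinct points $z_{1},z_{2}\in\iso{h}\cap\partial\wund$. Since $\overline{\wund}$ is convex (being the intersection of the convex set $\overline{\kund}$ with a vertical strip), the geodesic segment $[z_{1},z_{2}]_{\H}$ lies in $\overline{\wund}$; since $\iso{h}$ is the unique hyperbolic geodesic through $z_{1},z_{2}$, the same segment also lies on $\iso{h}$. Because $\wund$ is a fundamental polygon of the Ford type for $\Gamma_{\wund}$, in particular $\wund\subseteq\extiso{h}$, so the segment is in fact contained in $\partial\wund$. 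Now $\partial\wund$ is a finite disjoint union of sides, namely $\{b_{g}:g\in\Gamma_{\RELL}\}$ together with the two vertical sides at $\alpha'$ and $\beta'$; since no vertical side can carry an arc of the non-vertical semicircle $\iso{h}$, the segment must lie on some $b_{g}$, which is in turn an arc of $\iso{g}$. Two distinct semicircles intersect in at most two points, so $\iso{h}=\iso{g}\in\REL{\Gamma}$.

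The main subtlety, and the point I would verify most carefully, is the inclusion $\wund\subseteq\extiso{h}$ for every $h\in\Gamma_{\wund}\setminus\Gamma_{\wund,\infty}$; this is what promotes the raw condition ``more than one point'' to ``an arc of positive length'', and it effectively identifies $\wund$ with the Ford fundamental domain for $\Gamma_{\wund}$ relative to the strip $\pr^{-1}((\alpha',\beta')_{\R})$. I expect this to follow from the uniqueness of a convex fundamental polygon with the prescribed side-pairing, together with the fact that isometric spheres are uniformly bounded in height (so that $\wund$ lies on the ``outside'' of every $\iso{h}$ near the cusp $\infty$, forcing the same containment globally by connectedness).
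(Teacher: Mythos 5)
Your first inclusion, $\REL{\Gamma}\subseteq\RELL_{\wund}$, is correct and is essentially the paper's argument: each non-identity $g\in\Gamma$ lies in $\Gamma_{\wund}\setminus\Gamma_{\wund,\infty}$, and the sides $b_g$, $g\in\Gamma_{\RELL}$, of $\kund$ lie in $\pr^{-1}([\alpha,\beta]_\R)\subseteq\pr^{-1}((\alpha',\beta')_\R)$ and are therefore sides of $\wund$ as well.

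The converse inclusion, however, has a genuine gap, and you have located it yourself: everything rests on $\wund\subseteq\extiso{h}$ for every $h\in\Gamma_{\wund}\setminus\Gamma_{\wund,\infty}$, i.e., on $\wund\subseteq\kund_{\wund}$, which is precisely the content of Proposition~\ref{PROP:wundisFord} ($\wund$ is of Ford type). In the paper that proposition is \emph{deduced from} Lemma~\ref{LEM:RelRell}, so invoking it here is circular relative to the paper's development, and neither of your two suggested repairs closes the gap: the connectedness argument only works once you already know $\wund\cap\iso{h}=\varnothing$, which is exactly what is to be shown; and the uniqueness argument requires first knowing that $\pr^{-1}((\alpha',\beta')_\R)\cap\kund_{\wund}$ is itself a fundamental domain for $\Gamma_{\wund}$ (Ford's theorem for the auxiliary group, for this particular strip), which is not available at this stage. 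The paper sidesteps the issue entirely: since $\partial\kund\subseteq\pr^{-1}([\alpha,\beta]_\R)$ and $[\alpha,\beta]_\R\subseteq(\alpha',\beta')_\R$, the construction $\wund=\kund\cap\pr^{-1}((\alpha',\beta')_\R)$ forces the set of sides of $\wund$ to be exactly $\defset{b_g}{g\in\Gamma_{\RELL}}$ together with the two vertical lines over $\alpha'$ and $\beta'$; hence any isometric sphere of $\Gamma_{\wund}$ meeting $\partial\wund$ in more than one point already meets $\partial\kund$ in more than one point and, by your own two-circles observation, coincides with $\iso{g}$ for some $g\in\Gamma_{\RELL}$. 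No appeal to $\kund_{\wund}$ is needed. If you insist on your route, you must first prove $\wund\subseteq\kund_{\wund}$ independently (e.g., via $\kund_{\wund}\subseteq\kund$ and a comparison of the two convex fundamental domains for $\Gamma_{\wund}$), but the direct argument from the side structure of $\wund$ is both shorter and non-circular.
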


\begin{proof}
By construction we have~$\Gamma_{\RELL}\subseteq\Gamma_{\wund}$.
Since the non-vertical sides of~$\wund$ coincide with those of~$\kund$, it follows that
\[
\REL{\Gamma}\subseteq\RELL_{\wund}\,.
\]
For~$\I\in\RELL_{\wund}$ also~$\I\cap\partial\kund$ contains more than one point, implying
\[
\REL{\Gamma}\supseteq\RELL_{\wund}\,.\qedhere
\]
\end{proof}

\begin{prop}\label{PROP:wundisFord}
The fundamental domain $\wund$ for $\Gamma_{\wund}$ is of the Ford type.
\end{prop}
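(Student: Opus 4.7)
The plan is to identify $\wund$ with the set
\[
F \coloneqq \kund_{\wund}\cap\fund_{\wund,\infty}\,,
\]
which by the general Ford theory for Fuchsian groups with cusps (cf. \cite[Theorem~6.1.38]{Pohl_diss} applied to $\Gamma_\wund$) is an exact, convex, geometrically finite fundamental polyhedron for $\Gamma_\wund$. The hypotheses of that theorem are available here: Lemma~\ref{LEM:Stabinwund} identifies $\fund_{\wund,\infty}$ as a fundamental domain for $\Gamma_{\wund,\infty}$, and the required $\Gamma_{\wund,\infty}$-invariance of $\kund_\wund$ follows from the analogue of~\eqref{EQ:gammainftykund} for $\Gamma_\wund$ (the set of isometric spheres of $\Gamma_\wund\setminus\Gamma_{\wund,\infty}$ being permuted by $t_\lambda^{\pm1}$).

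First I would verify the inclusion $F\subseteq\wund$. This is immediate: since $\Gamma$ contains no parabolic elements, $\Gamma\cap\Gamma_{\wund,\infty}=\{\id\}$, hence $\Gamma\setminus\{\id\}\subseteq\Gamma_\wund\setminus\Gamma_{\wund,\infty}$, and so every isometric sphere of $\Gamma$ is also an isometric sphere of $\Gamma_\wund$. Consequently $\kund_\wund\subseteq\kund$, and intersecting with $\fund_{\wund,\infty}$ yields
\[
F\subseteq\kund\cap\fund_{\wund,\infty}=\wund
\]
by the definition~\eqref{EQDEF:wund} of $\wund$.

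For the reverse inclusion I would exploit that both $F$ and $\wund$ are exact fundamental polyhedra for the same group $\Gamma_\wund$, the latter by Proposition~\ref{PROP:wundispopoly} combined with the Poincar\'e theorem for fundamental polygons. Given $z$ in the interior of $\wund$ that is not a fixed point of any non-trivial element of $\Gamma_\wund$ (a condition satisfied on a dense open subset), the fundamentality of $F$ produces some $g\in\Gamma_\wund$ with $g\act z\in\overline{F}\subseteq\overline{\wund}$. Exactness of $\wund$ as a fundamental polyhedron then forces $g=\id$, and therefore $z\in\overline{F}$. Hence the interior of $\wund$ is contained in $\overline{F}$, which together with $F\subseteq\wund$ implies that the two convex polyhedra share the same interior and therefore coincide as (closed) polyhedra.

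The main obstacle, as I see it, is the first step: securing that $F$ itself is a Ford fundamental domain for $\Gamma_\wund$. Once this is in place, the remaining argument is the essentially set-theoretic comparison above plus some routine bookkeeping at the boundary; the key structural inputs have already been isolated in Lemmas~\ref{LEM:Stabinwund} and~\ref{LEM:RelRell} and in Proposition~\ref{PROP:wundispopoly}.
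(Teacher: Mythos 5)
Your proposal is correct, but it takes a genuinely different route from the paper. The paper's proof is a three-line direct computation: by definition $\wund=\fund_{\wund,\infty}\cap\kund=\fund_{\wund,\infty}\cap\bigcap_{\I\in\REL{\Gamma}}\mathrm{ext}\,\I$, and Lemma~\ref{LEM:RelRell} (which you cite but never actually use) identifies $\REL{\Gamma}$ with the spheres of $\Gamma_\wund$ bounding $\wund$, whence the common exteriors of $\Gamma$ and of $\Gamma_\wund$ agree inside the strip and $\wund=\fund_{\wund,\infty}\cap\kund_\wund$ follows at once. You instead argue indirectly: the inclusion $\Gamma\setminus\{\id\}\subseteq\Gamma_\wund\setminus\Gamma_{\wund,\infty}$ gives $\Iso{\Gamma}\subseteq\Iso{\Gamma_\wund}$, hence $\kund_\wund\subseteq\kund$ and $F\coloneqq\kund_\wund\cap\fund_{\wund,\infty}\subseteq\wund$, and then the rigidity of nested fundamental domains (two open convex fundamental domains for the same group, one contained in the other, must coincide) forces equality. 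This buys you independence from the bookkeeping of relevant isometric spheres --- in effect your argument re-derives the content of Lemma~\ref{LEM:RelRell} rather than consuming it --- at the price of invoking two heavier inputs: the Ford theorem for $\Gamma_\wund$ (that $F$ is a fundamental domain) and the Poincar\'e theorem via Proposition~\ref{PROP:wundispopoly} (that $\wund$ is one); the paper's computation needs neither beyond what the statement of the proposition already presupposes. Two small points to tighten: the Ford theorem as quoted guarantees the fundamental-domain property of $\fund_\infty(r)\cap\kund_\wund$ only for \emph{some} $r$, whereas you need it for $r=\alpha'$ specifically, so you should make explicit that the $\Gamma_{\wund,\infty}$-invariance of $\kund_\wund$ renders the choice of $r$ immaterial (you gesture at this but do not spell it out); and in the final step the passage from ``$\wund\subseteq\overline{F}$ and $F\subseteq\wund$'' to ``$F=\wund$'' should explicitly use that $\operatorname{int}\bigl(\overline{F}\bigr)=F$ for the open convex set $F$.
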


\begin{proof}
Because of Lemma~\ref{LEM:Stabinwund} it remains to show that
\[
\wund=\fund_{\wund,\infty}\cap\bigcap_{g\in\Gamma_{\wund}\setminus\Gamma_{\wund,\infty}}\extiso{g}=\fund_{\wund,\infty}\cap\bigcap_{\I\in\REL{\Gamma_{\wund}}}\mathrm{ext\,}\I\,.
\]
From Lemma~\ref{LEM:RelRell} we obtain
\begin{align*}
\wund&=\pr^{-1}((\alpha',\beta')_\R)\cap\kund=\fund_{\wund,\infty}\cap\bigcap_{\I\in\REL{\Gamma}}\mathrm{ext\,}\I\\
&=\fund_{\wund,\infty}\cap\bigcap_{\I\in\RELL_{\wund}}\mathrm{ext\,}\I=\fund_{\wund,\infty}\cap\bigcap_{\I\in\REL{\Gamma_{\wund}}}\mathrm{ext\,}\I\,.\qedhere
\end{align*}
\end{proof}

\begin{cor}\label{COR:condA}
If~$\Gamma$ fulfills condition~\eqref{condA}, then so does~$\Gamma_{\wund}$.
\end{cor}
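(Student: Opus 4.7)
The plan is to exploit the $\Gamma_{\wund,\infty}$-invariance of $\kund_\wund$ (recorded in~\eqref{EQ:gammainftykund}) together with Lemma~\ref{LEM:RelRell} in order to reduce condition~\eqref{condA} for $\Gamma_\wund$ to the version we have for $\Gamma$. By Proposition~\ref{PROP:wundisFord} and Lemma~\ref{LEM:Stabinwund}, the common exterior of $\Gamma_\wund$ admits the tessellation $\overline{\kund_\wund}=\bigcup_{n\in\Z}t_\lambda^n\act\overline{\wund}$; after the vertical sides of adjacent tiles cancel, one obtains
\[
\partial\kund_\wund\cap\H\,=\,\bigcup_{n\in\Z}\bigcup_{g\in\Gamma_{\RELL}}t_\lambda^n\act b_g\,,
\]
where $b_g=\iso{g}\cap\partial\kund$ denotes the non-vertical side of $\wund$ associated with $g\in\Gamma_{\RELL}$ (cf.~Remark~\ref{REM:finsides}).

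Now fix an arbitrary $\I\in\REL{\Gamma_\wund}$. By~\eqref{ITEM:iso:relpart} applied to $\Gamma_\wund$, the set $\overline{\I}^{\geo}\cap\partial_\geo\kund_\wund$ is a geodesic segment $[\xi_1,\xi_2]_\H$ of positive length contained in $\I$. The tessellation description above forces this segment to be covered by intersections of the form $\I\cap t_\lambda^n\act b_g$, each of which lies on the isometric sphere $t_\lambda^n\act\iso{g}=\iso{gt_\lambda^{-n}}$ by~\eqref{ITEM:iso:gener}. Since two distinct isometric spheres meet in at most two points, the positive length of $[\xi_1,\xi_2]_\H$ forces the coincidence $\I=t_\lambda^n\act\iso{g}$ for some pair $(n,g)\in\Z\times\Gamma_{\RELL}$. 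Setting $\I_0\coloneqq\iso{g}$, Lemma~\ref{LEM:RelRell} yields $\I_0\in\RELL_\wund=\REL{\Gamma}$, and $\I=t_\lambda^n\act\I_0$.

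By condition~\eqref{condA} for $\Gamma$ applied to $\I_0$, the summit $\summit{\I_0}$ lies in the relative interior of $b_g=\I_0\cap\partial\kund$. Because $t_\lambda^n$ acts as a Euclidean horizontal translation of $\H$, it maps isometric spheres to isometric spheres and carries their summits along, so $\summit{\I}=t_\lambda^n\act\summit{\I_0}$ lies in the relative interior of $t_\lambda^n\act b_g$. But $t_\lambda^n\act b_g$ is a sub-segment of $\I\cap\partial\kund_\wund=[\xi_1,\xi_2]_\H$, so $\summit{\I}$ is interior to $[\xi_1,\xi_2]_\H$ as well and in particular not one of its endpoints. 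This is condition~\eqref{condA} for $\Gamma_\wund$. The main technical point is the identification step in the second paragraph, namely pinning down that every relevant isometric sphere of $\Gamma_\wund$ is a $\Gamma_{\wund,\infty}$-translate of a relevant isometric sphere of $\Gamma$; once this is settled, the transfer of the summit property is a formal consequence of horizontal translations preserving summits together with the inclusion $t_\lambda^n\act b_g\subseteq\I\cap\partial\kund_\wund$.
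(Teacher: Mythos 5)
Your argument is correct and follows exactly the route the paper intends: the paper states this corollary without proof, treating it as immediate from Proposition~\ref{PROP:wundisFord} (that $\wund$ is a Ford domain, so $\kund_\wund$ is tiled by $\Gamma_{\wund,\infty}$-translates of $\wund$) together with Lemma~\ref{LEM:RelRell}, which is precisely the identification you carry out. Your write-up supplies the details the paper omits --- pinning down via~\eqref{ITEM:iso:gener} and the two-point intersection property that every $\I\in\REL{\Gamma_\wund}$ is a $t_\lambda^n$-translate of some $\I_0\in\REL{\Gamma}$, and then transporting the summit condition by the height-preserving horizontal translation --- with no gaps.
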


The results of this section show that, under the assumption that~$\Gamma$ fulfills~\eqref{condA}, the group~$\Gamma_{\wund}$ meets all the requirements for an application of the cusp expansion algorithm.
Hence, by virtue of Theorem~\ref{THM:cuspexpSoB} and Proposition~\ref{PROP:finram} we obtain a finitely ramified set of branches for the geodesic flow on~$\Orbi_{\wund}$, which here we denote as
\[
\BrS_{\wund}=\{\Cs{\wund,1},\dots,\Cs{\wund,N}\}\,.
\]
As before, we set~$A\coloneqq\{1,\dots,N\}$, define
\[
\Cs{\wund,j,\st}\coloneqq\defset{\nu\in\Cs{\wund,j}}{(\gamma_\nu(+\infty),\gamma_\nu(-\infty))\in\R_{\st}\times\R_{\st}}
\]
for every~$j\in A$, and set
\[
\BrU_{\P,\st}\coloneqq\bigcup_{j\in A}\Cs{\P,j,\st}\,.
\]
Because of Lemma~\ref{LEM:CPperiod} no loss of generality is entailed by assuming that
\begin{align}\label{EQN:SoBinfund}
\base{\Cs{\wund,j}}\subseteq\pr^{-1}([\alpha',\beta']_\R)
\end{align}
for all~$j\in A$.
So we assume that this is the case.
We further denote by~$\Trans{\wund}{j}{k}$ the transition set for $j,k\in A$ given by~\eqref{BP:intervaldecomp} as well as by~$I_{\wund,j}$ and~$J_{\wund,j}$ the intervals associated to~$\Cs{\wund,j}$ by~\eqref{BP:pointintohalfspaces}.
As before we denote by~$x_j$ the unique point associated to~$\Cs{\wund,j}$ by~\eqref{ITEM:cex:setstructure}.

%----------------------------------------------------------------------------------------------------------------------------
\subsection*{A set of branches for~$\Gamma$}
%----------------------------------------------------------------------------------------------------------------------------

We now transfer the set of branches~$\BrS_{\wund}$ back to the orbit space~$\Orbi$ of the initial group~$\Gamma$ whose hyperbolic ends are all funnels.
Not all of the branches~$\Cs{\wund,j}$ ``survive'' this transfer.
We clarify in the following what we mean by that.
Since~$\Gamma$ contains no parabolic elements and the ordinary set is assumed to contain a neighborhood of~$\infty$, we have
\[
\R_{\st}=\widehat\R_{\st}=\Lambda(\Gamma)\,.
\]
For~$j\in A$ define
\[
\Cs{\wund,j,\st}\coloneqq\defset{\nu\in\Cs{\wund,j}}{(\gamma_\nu(+\infty),\gamma_\nu(-\infty))\in\R_{\st}\times\R_{\st}}\,.
\]
Then~$\Cs{\wund,j,\st}=\varnothing$ whenever~$I_{\wund,j}\cap\R_{\st}=\varnothing$ or~$J_{\wund,j}\cap\R_{\st}=\varnothing$.
This is the case, for instance, if~$x_j\in\{\alpha',\beta'\}$.
Since the cusp expansion algorithm for~$\Gamma_{\wund}$ does indeed establish branches with that property and those do not intersect periodic geodesics of~$\Gamma$, it is necessary to exclude those from now on (see \eqref{BP:closedgeodesicsHtoX}).
Therefore, we define
\[
\neindex\coloneqq\defset{j\in A}{\Cs{\wund,j,\st}\ne\varnothing}\,.
\]
\begin{prop}\label{LEM:Anotempty}
$\neindex\ne\varnothing$.
\end{prop}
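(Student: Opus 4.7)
The plan is to apply Lemma~\ref{LEM:allperiodintersect} to the set of branches $\BrS_\wund$ and exploit the fact that periodic $\Gamma$-geodesics lie in the convex core of $\Orbi$, which forces their intersections with $\Gamma_\wund\act\BrU_\wund$ to occur at $\Gamma$-translates of $\wund$ rather than at arbitrary $\Gamma_\wund$-translates of it.

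I would pick any hyperbolic $h\in\Gamma$; since $\alpha(h)\in\Geo_{\Per,\Gamma}(\H)\subseteq\Geo_{\Per,\Gamma_\wund}(\H)$, Lemma~\ref{LEM:allperiodintersect} yields $g\in\Gamma_\wund$, $j\in A$, and a vector $\nu'\in\alpha(h)\cap g\act\Cs{\wund,j}$. Setting $\nu_0\coloneqq g^{-1}\act\nu'\in\Cs{\wund,j}$, the associated geodesic is $g^{-1}\act\alpha(h)=\alpha(g^{-1}hg)$ with endpoints $g^{-1}\act\fixp{\pm}{h}$. If $g\in\Gamma$, then $g^{-1}hg\in\Gamma$ is hyperbolic with fixed points in $\Lambda(\Gamma)=\R_\st$, whence $\nu_0\in\Cs{\wund,j,\st}$ and $j\in\neindex$.

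To force $g\in\Gamma$, I would establish the inclusion $\alpha(h)\subseteq\Gamma\act\wund$. Since $\Gamma$ is geometrically finite without cusps, the convex core of $\Orbi$ is compact, so the hyperbolic convex hull of $\Lambda(\Gamma)$ intersected with the fundamental domain $\kund$ (Proposition~\ref{PROP:kundisfund}) is contained in some vertical strip $\pr^{-1}([a,b]_\R)$. After possibly tightening the choice of $\alpha'$ and $\beta'$ in~\eqref{EQDEF:lambdachoice} so that $[a,b]\subseteq(\alpha',\beta')_\R$ — permissible since no further property of the construction depends on the precise values — this hull intersected with $\kund$ lies in $\wund$, and by $\Gamma$-invariance of the hull we obtain containment of the entire hull in $\Gamma\act\wund$. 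In particular $\alpha(h)\subseteq\Gamma\act\wund$. Now the $\Gamma_\wund$-translates of $\wund$ form an essentially disjoint tessellation of $\H$, so for the base point $z$ of $\nu'$ chosen generically in the interior of its unique $\Gamma_\wund$-tile $g\act\wund$, the identification $z\in\gamma\act\wund$ for some $\gamma\in\Gamma$ (provided by $\alpha(h)\subseteq\Gamma\act\wund$) forces $g\act\wund=\gamma\act\wund$, i.e.\ $g=\gamma\in\Gamma$.

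The main obstacle is the tight choice of $\alpha',\beta'$ needed to secure $\alpha(h)\subseteq\Gamma\act\wund$; this reflects the geometric requirement that the auxiliary group construction should not cut through the convex core of $\Orbi$. The countable set of non-generic intersections on $\alpha(h)$, where $z$ lies on a tile boundary, can be handled either by perturbation to a nearby generic point on $\alpha(h)$ or by the explicit side-pairing structure of $\wund$; the remaining bookkeeping is routine.
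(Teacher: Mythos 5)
Your overall strategy --- run a periodic $\Gamma$-geodesic through the branch system of $\Gamma_\wund$ via Lemma~\ref{LEM:allperiodintersect} and then pull the intersection back to an untranslated branch --- is natural and close in spirit to what the paper does, but the decisive step ``force $g\in\Gamma$'' contains a genuine gap. You identify the element $g$ with $\nu'\in g\act\Cs{\wund,j}$ with the element $\gamma\in\Gamma$ for which $z=\base{\nu'}\in\gamma\act\overline{\wund}$, writing of ``the interior of its unique $\Gamma_\wund$-tile $g\act\wund$.'' These are two different group elements. By~\eqref{ITEM:cex:setstructure} the base $\base{\Cs{\wund,j}}=(x_j,\infty)_{\H}$ is an entire vertical geodesic, and only its upper portion lies in $\overline{\wund}$; near its finite endpoint $x_j$ (the center of an isometric sphere $\iso{q_0}$, $q_0\in\Gamma$, or a point of $\geo\kund$) it traverses tiles $q\act\wund$ with $q\in\Gamma_\wund\setminus\Gamma$, for instance $q\in q_0^{-1}\Gamma_{\wund,\infty}$, since $\Gamma_{\wund,\infty}=\langle t_\lambda\rangle$ is not contained in $\Gamma$. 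Equivalently, there are branch copies $g\act\Cs{\wund,j}$ with $g\in\Gamma_\wund\setminus\Gamma$ whose base points lie inside $\Gamma$-tiles and inside the convex hull of $\Lambda(\Gamma)$ (note that $\Gamma_\wund\act\infty$ accumulates on all of $\Lambda(\Gamma_\wund)\supseteq\Lambda(\Gamma)$, so such copies do separate limit points of $\Gamma$ and can be hit by $\alpha(h)$). Thus the intersection supplied by Lemma~\ref{LEM:allperiodintersect} may perfectly well occur on a copy with $g\notin\Gamma$, in which case $g^{-1}\act\fixp{\pm}{h}$ need not lie in $\Lambda(\Gamma)$ and no conclusion about $\neindex$ follows. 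Knowing $\alpha(h)\subseteq\Gamma\act\overline{\wund}$ does not repair this, because membership of the base point in a $\Gamma$-tile does not constrain the translating element of the branch copy.

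This is exactly the difficulty the paper's proof is built to circumvent: it does not take an arbitrary intersection point, but explicitly locates \emph{vertical} branch copies $h\act\base{\Cs{\wund,k}}$ inside the strip $\pr^{-1}((\alpha',\beta'))$ --- for which Lemma~\ref{LEM:vertinGamma} does guarantee $h\in\Gamma$ --- and then uses Lemma~\ref{LEM:isolimit}, the density of $E(\Orbi)$ in $\Lambda(\Gamma)\times\Lambda(\Gamma)$, and the boundary analysis of Lemma~\ref{LEM:thirdsphere} (together with the angle-sum argument for the elliptic case) to produce a hyperbolic axis that actually crosses one of \emph{those} copies. If you want to rescue your argument, you would need an analogue of Lemma~\ref{LEM:vertinGamma} that applies to the particular intersection produced by Lemma~\ref{LEM:allperiodintersect}, or a reason why $\alpha(h)$ must meet at least one branch copy translated by an element of $\Gamma$; neither is currently supplied, and the latter is essentially the content of the paper's proof. (Two smaller remarks: the tightening of $\alpha',\beta'$ is unnecessary, since $\Lambda(\Gamma)\subseteq[\alpha,\beta]$ already places the convex hull inside $\pr^{-1}((\alpha',\beta'))$; and the inclusion should read $\alpha(h)\subseteq\Gamma\act\overline{\wund}$, with closures.)
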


The proof of Proposition~\ref{LEM:Anotempty} makes use of Lemma~\ref{LEM:isolimit}.
There the case of groups conjugate to~$\left<z\mapsto-\tfrac{1}{z},z\mapsto\lambda z\right>$ had to be excluded, which we therefore have to treat separately here. This is done in the following example, which simultaneously illustrates the strategy of the ensuing proof: Utilizing Lemma~\ref{LEM:isolimit} and the density of the set~$E(\Orbi)$ from~\eqref{EQNDEF:EX} in~$\Lambda(\Gamma)\times\Lambda(\Gamma)$, we find interrelated hyperbolic fixed points underneath the outermost isometric spheres (i.e., in the sets~$\underiso{g_{1/2}}$ from Lemma~\ref{LEM:isolimit}, for generators~$g_{1/2}$ of said spheres).
We further identify a branch copy separating the hyperbolic fixed points, which is subsequently seen to be intersected by the associated hyperbolic axis.

\begin{example}\label{EX:excludedgroups}
We consider the conjugation of the aforementioned group by the transformation~$\tfrac{1}{\sqrt{2}}\begin{bsmallmatrix}1&-1\\1&1\end{bsmallmatrix}\in\PSLR$, which leads to the generators
\[
h_\lambda\coloneqq\frac{1}{2\sqrt{\lambda}}\begin{bmatrix}\lambda+1&\lambda-1\\\lambda-1&\lambda+1\end{bmatrix}\qquad\text{and}\qquad s\coloneqq\begin{bmatrix}0&-1\\1&0\end{bmatrix}\,,
\]
for~$\lambda>1$. A fundamental domain is indicated in Figure~\ref{FIG:excludedgroupsfund}.
\begin{figure}[h]
\begin{tikzpicture}[scale=2.3]
\tikzmath{\q=1/8;
				  \r=2*sqrt(\q)/abs(\q-1);
				  \c=(\q+1)/(1-\q);
				  \d=(\q+1)/(\q-1);
				  }
\fill[color=lightgray!50] (-2.7,0) -- (2.7,0) -- (2.7,2.3) -- (-2.7,2.3) -- cycle;
\fill[color=white] (-1,0) arc (180:0:1) -- cycle;
\draw (-1,0) arc (180:0:1);
\fill[color=white] (\d-\r,0) arc (180:0:\r) -- cycle;
\fill[color=white] (\c+\r,0) arc (0:180:\r) -- cycle;
\draw (\d-\r,0) arc (180:50:\r);
\draw (\c+\r,0) arc (0:130:\r);
\draw[style=dashed] (-1,0) arc (180:0:1);
\draw[style=dashed] (\d-\r,0) arc (180:0:\r);
\draw[style=dashed] (\c+\r,0) arc (0:180:\r);
\draw[style=thick] (-2.7,0) -- (2.7,0);
\foreach \x/\y in {\c+\r/$\frac{\lambda+1+2\sqrt{\lambda}}{\lambda-1}$,
\c/$\frac{\lambda+1}{\lambda-1}$,
-1/$-1$,
0/$0$,
1/$1$,
\d/$\frac{\lambda+1}{1-\lambda}$,
\d-\r/$\frac{\lambda+1+2\sqrt{\lambda}}{1-\lambda}$}{
\draw (\x,.05) -- (\x,-.05) node [below] {\y};
}
\coordinate [label=below:$\color{gray}\fund$] (F) at (0,2);
\coordinate [label=below:$\iso{s}$] (S) at (0,.95);
\coordinate [label=below:$\iso{h_\lambda^{-1}}$] (H1) at (\c,\r-.05);
\coordinate [label=below:$\iso{h_\lambda}$] (H) at (\d,\r-.05);
\end{tikzpicture}
\caption[exFund]{A Ford fundamental domain for~$\left<h_\lambda,s\right>$.
Since~$h_\lambda$ fixes~$1$ and~$-1$, we have~$\gamma(\R)=\iso{s}$ for each~$\gamma\in\alpha(h_\lambda)$.
Thus, the angle~$\fund$ subtends at the intersection points of the isometric spheres is~$\tfrac{\pi}{2}$ each, which implies that~$\fund$ is a Poincar\'e polygon.}\label{FIG:excludedgroupsfund}
\end{figure}
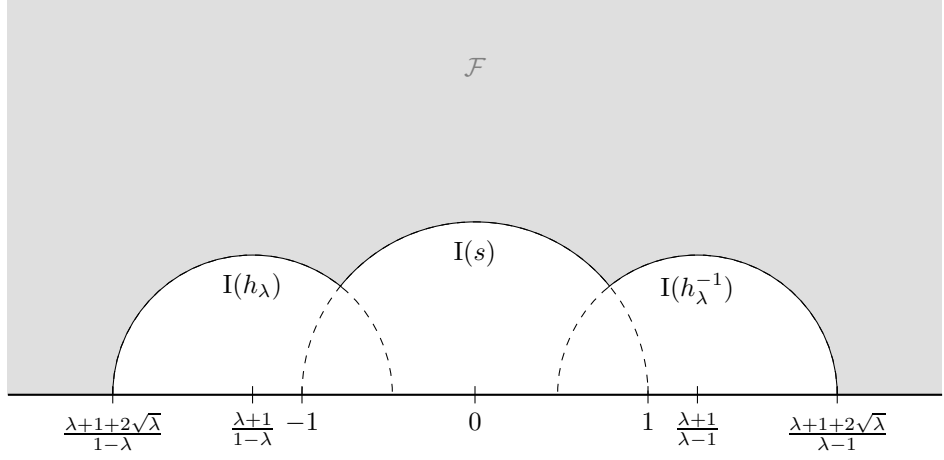

We proceed as described above in order to find a set of branches.
Choose, for instance,
\[
\alpha'\coloneqq\frac{\lambda+1+3\sqrt\lambda}{1-\lambda}
\qquad\text{and}\qquad
\beta'\coloneqq\frac{\lambda+1+3\sqrt\lambda}{\lambda-1}\,.
\]
A set of branches as constructed by the cusp expansion algorithm is indicated in Figure~\ref{FIG:excludedgroupsSoB}.
And from Figure~\ref{FIG:excludedgroupsSoB} it already becomes apparent that the axis of~$h_\lambda$ intersects~$\Cs{\wund,4}$.
Since~each element of~$\alpha(h_\lambda)$ represents a periodic geodesic on the associated orbisurface it follows that~$4\in \neindex$.
In fact, in this example we find~$\neindex=\{4\}$.
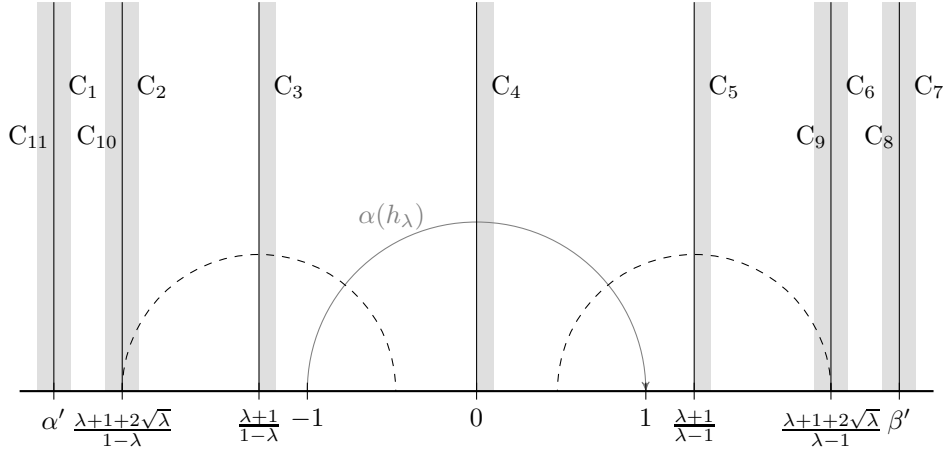
\begin{figure}[h]
\begin{tikzpicture}[scale=2.25]
\tikzmath{\q=1/8;
				  \r=2*sqrt(\q)/abs(\q-1);
				  \c=(\q+1)/(1-\q);
				  \d=(\q+1)/(\q-1);
				  \a=\d-3*\r/2;
				  \b=\c+3*\r/2;
				  }
\foreach \x/\y/\z in {\a/$\Cs{1}$/$\Cs{11}$,
\d-\r/$\Cs{2}$/$\Cs{10}$,
\c+\r/$\Cs{6}$/$\Cs{9}$,
\b/$\Cs{7}$/$\Cs{8}$
}{
\fill[color=lightgray!50] (\x-.1,0) -- (\x+.1,0) -- (\x+.1,2.3) -- (\x-.1,2.3);
\coordinate [label=right:\y] (C\x) at (\x+.03,1.8);
\coordinate [label=left:\z] (C\x) at (\x+.03,1.5);
}
\coordinate [label=above:$\color{gray}\alpha(h_\lambda)$] (AHL) at (-.5,.9);
\foreach \x/\y in {\d/$\Cs{3}$,
0/$\Cs{4}$,
\c/$\Cs{5}$
}{
\fill[color=lightgray!50] (\x,0) -- (\x+.1,0) -- (\x+.1,2.3) -- (\x,2.3);
\coordinate [label=right:\y] (C\x) at (\x+.03,1.8);
}
\draw[<-,color=gray] (1,0) arc (0:180:1);
\draw[style=dashed] (\d-\r,0) arc (180:0:\r);
\draw[style=dashed] (\c+\r,0) arc (0:180:\r);
\draw[style=thick] (-2.7,0) -- (2.7,0);
\foreach \x/\y in {\a/$\alpha'$,
\c+\r/$\frac{\lambda+1+2\sqrt{\lambda}}{\lambda-1}$,
\c/$\frac{\lambda+1}{\lambda-1}$,
-1/$-1$,
0/$0$,
1/$1$,
\d/$\frac{\lambda+1}{1-\lambda}$,
\d-\r/$\frac{\lambda+1+2\sqrt{\lambda}}{1-\lambda}$,
\b/$\beta'$}{
\draw (\x,.05) -- (\x,-.05) node [below] {\y};
}
\foreach \x in {\a,\d-\r,\d,0,\c,\c+\r,\b}{
\draw (\x,0) -- (\x,2.3);
}
\end{tikzpicture}
\caption[exSoB]{A set of branches obtained by application of the cusp expansion algorithm.
The gray stripes indicate that the respective set~$\Cs{j}=\Cs{\wund,j}$ consists of unit tangent vectors based on the adjacent vertical geodesic and pointing into the indicated half-space.
The subscript~$\wund$ is omitted in favor of readability.}\label{FIG:excludedgroupsSoB}
\end{figure}
\end{example}

We further require the following preparations.

\begin{lemma}\label{LEM:vertinGamma}
Let~$j\in A$ and~$g\in\Gamma_\wund$ be such that~$g\act\base{\Cs{\wund,j}}$ is vertical and contained in~$\pr^{-1}((\alpha',\beta'))$.
Then~$g\in\Gamma$.
\end{lemma}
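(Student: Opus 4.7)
The plan is to split the argument according to how $g$ acts on $\infty$. Since $\base{\Cs{\wund,j}}=(x_j,\infty)_\H$ is vertical by~\eqref{ITEM:cex:setstructure} and its image under $g$ is also vertical, either $g$ fixes $\infty$ or $g\act x_j=\infty$. I would treat these two cases separately.

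If $g\act\infty=\infty$, then Lemma~\ref{LEM:Stabinwund} forces $g=t_\lambda^k$ for some $k\in\Z$, so $g\act\base{\Cs{\wund,j}}=\pr^{-1}(x_j+k\lambda)$. The requirement that this set lie in the \emph{open} strip $\pr^{-1}((\alpha',\beta')_\R)$, combined with $x_j\in[\alpha',\beta']$ (by~\eqref{EQN:SoBinfund}) and $\lambda=\beta'-\alpha'$, forces $k=0$, hence $g=\id\in\Gamma$.

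The main case is $g\act x_j=\infty$ and $y\coloneqq g\act\infty\in(\alpha',\beta')$. Here $\iso{g}$ is well-defined and centered at $x_j$. The first crucial step is to show $\iso{g}\in\RELL_\wund$. By~\eqref{ITEM:cex:setstructure} applied to $\Gamma_\wund$, $x_j$ either lies in $\geo\kund_\wund$ or is the center of a relevant isometric sphere of $\Gamma_\wund$. The former is impossible since points of $\H$ sufficiently close to $x_j$ lie in $\intiso{g}$ and hence outside $\kund_\wund$. Lemma~\ref{LEM:isoconcentric} then identifies the relevant sphere at $x_j$ with $\iso{g}$, giving $\iso{g}\in\REL{\Gamma_\wund}$; the same argument rules out $x_j\in\{\alpha',\beta'\}$ since these points belong to $\geo\kund_\wund$, so $x_j\in(\alpha',\beta')$ strictly. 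Consequently the summit of $\iso{g}$ lies in $\fund_{\wund,\infty}$, and by Corollary~\ref{COR:condA} it lies on $\partial\kund_\wund$; hence a positive-length arc around the summit contributes to $\partial\wund$, so $\iso{g}\in\RELL_\wund=\REL{\Gamma}$ by Lemma~\ref{LEM:RelRell}.

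To conclude, I would pick the unique $\hat h\in\Gamma$ with $\iso{\hat h}=\iso{g}$ and invoke~\eqref{ITEM:iso:gener} applied to $\Gamma_\wund$ to write $g=t_\lambda^m\hat h$ for some $m\in\Z$. Then $y=\hat h\act\infty+m\lambda$, while~\eqref{ITEM:iso:slice} confines the center $\hat h\act\infty$ of $\iso{\hat h^{-1}}\in\Iso{\Gamma}$ to $[\alpha,\beta]\subsetneq(\alpha',\beta')$. Since $\beta'-\alpha'=\lambda$, both $y$ and $\hat h\act\infty$ lie in an interval of length strictly less than $\lambda$, forcing $m=0$ and thus $g=\hat h\in\Gamma$. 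The main obstacle will be the strict containment $x_j\in(\alpha',\beta')$ in the second case, which is what ultimately underpins both the identification $\iso{g}\in\RELL_\wund$ and the concluding width argument.
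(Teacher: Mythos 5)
Your proof is correct and follows essentially the same route as the paper: the same case split on which endpoint of~$\base{\Cs{\wund,j}}$ is sent to~$\infty$, elimination of the~$\Gamma_{\wund,\infty}$ case by the width of the strip, identification of~$\iso{g}$ with a relevant sphere of~$\Gamma$ via Lemma~\ref{LEM:RelRell}, and a final cusp-width argument forcing the power of~$t_\lambda$ to vanish. The only notable difference is that you justify the passage from~$\iso{g}\in\REL{\Gamma_\wund}$ to~$\iso{g}\in\RELL_{\wund}$ explicitly through the summit and Corollary~\ref{COR:condA}, a step the paper leaves implicit (citing only~\eqref{EQN:SoBinfund} and Lemma~\ref{LEM:RelRell}); this is a legitimate and arguably more careful filling-in of that detail.
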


\begin{proof}
Assume~$g\ne\id$, for otherwise there is nothing to show.
Since~$\base{\Cs{\wund,j}}$ and~$g\act\base{\Cs{\wund,j}}$ are both vertical, for~$y\in\pr(g\act\base{\Cs{\wund,j}})$ either
\begin{align*}
y&=g\act\infty\qquad\text{and}\qquad\infty=g\act x_k\,,\qquad\text{or}\\
y&=g\act x_k\qquad\text{and}\qquad g\in\Gamma_{\wund,\infty}\,.
\end{align*}
Because of~\eqref{EQN:SoBinfund} and~$g\ne\id$ the latter case implies that~$\{y,x_k\}=\{\alpha',\beta'\}$, which contradicts the choice of~$j$ and~$g$.
Hence, the former must hold, which implies in particular that~$y$ equals the center of~$\iso{g^{-1}}$ and~$x_k$ equals the center of~$\iso{g}$ (see~\eqref{ITEM:iso:centers}).
Because of that, Lemma~\ref{LEM:centerofREL} implies
\[
\{\iso{g},\iso{g^{-1}}\}\subseteq\REL{\Gamma_\wund}\,.
\]
Combining this with~\eqref{EQN:SoBinfund} and Lemma~\ref{LEM:RelRell} yields
\[
\{\iso{g},\iso{g^{-1}}\}\subseteq\RELL_{\wund}=\REL{\Gamma}\,.
\]
Because of this~\eqref{ITEM:iso:biject} yields a unique~$h\in\Gamma_{\RELL}$ such that~$\iso{h}=\iso{g}$.
By the first statement of~\eqref{ITEM:iso:gener} this implies~$g=t_{\lambda}^nh$ with some~$n\in\Z$.
Now the second statement of~\eqref{ITEM:iso:gener} yields
\[
\iso{h^{-1}}=\iso{g^{-1}t_{\lambda}^n}=t_\lambda^{-n}\act\iso{g^{-1}}\,.
\]
But because of~\eqref{ITEM:iso:relpart} we also have~$\iso{h^{-1}}\in\RELL_{\wund}$, which implies in particular that~$\iso{h^{-1}}\in\pr^{-1}([\alpha',\beta'])$.
Therefore,~$n=0$, which shows~$g\in\Gamma$.
\end{proof}

Recall that~$\alpha,\beta$ have been chosen optimal for
\[
\partial\kund\subseteq\pr^{-1}([\alpha,\beta]_\R)\,.
\]
This is equivalent to the existence of~$\I_1,\I_2\in\REL{\Gamma}$ such that
\[
\alpha\in\geo\I_1\qquad\text{and}\qquad\beta\in\geo\I_2\,.
\]
The following result studies the further boundary structure of~$\kund$.

\begin{lemma}\label{LEM:thirdsphere}
Let~$\I_1,\I_2\in\REL{\Gamma}$ be such that~$\alpha\in\geo\I_1$ and~$\beta\in\geo\I_2$.
Further assume that~$\I_1=\iso{g}$ and $\I_2=\iso{g^{-1}}$ for some~$g\in\Gamma$.
Then either~$\Gamma=\left<g\right>$, or there exists~$\I_3\in\REL{\Gamma}$,~$\I_3\notin\{\I_1,\I_2\}$, such that~$\summit{\I_3}\in\partial\kund$, where~$\summit{\I_3}$ denotes the summit of~$\I_3$ from~\eqref{EQDEF:summit}.
In the latter case we further have
\begin{equation}\label{EQN:Resummits}
\Rea\summit{\I_1}<\Rea\summit{\I_3}<\Rea\summit{\I_2}\,.
\end{equation}
\end{lemma}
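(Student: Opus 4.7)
My approach is to dispose of $\Gamma=\langle g\rangle$ immediately, and, for $\Gamma\ne\langle g\rangle$, to proceed in three stages: (i) produce a relevant iso sphere outside $\{\I_1,\I_2\}$, (ii) verify~\eqref{EQN:Resummits}, and (iii) locate such a sphere with summit on $\partial\kund$. For (i), if $\REL{\Gamma}\setminus\{\I_1,\I_2\}=\varnothing$, then $\kund$ is a convex polygon in $\H$ whose only sides are the arcs on $\I_1,\I_2$, paired by the single transformation $g$; the Poincar\'e polygon theorem then forces $\Gamma=\langle g\rangle$, contradicting the hypothesis of the case.

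For (ii), take any $\I_3\in\REL{\Gamma}\setminus\{\I_1,\I_2\}$. Lemma~\ref{LEM:isoconcentric} rules out $\mathrm c(\I_3)\in\{\mathrm c(\I_1),\mathrm c(\I_2)\}$. Suppose for contradiction that $\mathrm c(\I_3)<\mathrm c(\I_1)$. Since $\I_3\subseteq\pr^{-1}([\alpha,\beta]_\R)$ and $\alpha=\mathrm c(\I_1)-r(\I_1)$, a direct Euclidean comparison of the distance between centres and the difference of radii places $\I_3$ inside the closed Euclidean half-disk bounded by $\I_1$, with at most tangential contact at $\alpha\in\partial_\geo\H$. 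Thus $\I_3\subseteq\intiso g$ and $\I_3\cap\partial\kund=\varnothing$, contradicting the relevance of $\I_3$; the symmetric argument at the right end yields $\mathrm c(\I_3)<\mathrm c(\I_2)$. Since $\Rea\summit\I=\mathrm c(\I)$, this is exactly~\eqref{EQN:Resummits}.

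For (iii), the pivotal observation is that any iso sphere of maximum radius in $\Iso\Gamma$ has its summit on $\partial\kund$: the summit sits at height equal to its own radius and therefore cannot lie strictly inside the Euclidean half-disk of any other iso sphere, whose interior has height bounded above by that sphere's radius. If such a maximum is attained outside $\{\I_1,\I_2\}$, take $\I_3$ to be it. Otherwise $r(\I_1)=r(\I_2)$ is the overall maximum and every other iso sphere is strictly smaller. I then examine the maximum of the height function $z\mapsto\Ima z$ along $\partial\kund$: at any vertex $V=\I_a\cap\I_b$ that attains a local maximum, either $V$ coincides with one of $\summit\I_a,\summit\I_b$ -- giving a summit on $\partial\kund$, and if the corresponding sphere is intermediate, also the desired $\I_3$ -- or the summits of $\I_a$ and $\I_b$ both lie on the continuations past $V$ inside each other's Euclidean half-disks, forcing the contradictory pair $r(\I_a)<r(\I_b)$ and $r(\I_b)<r(\I_a)$. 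Every height-function maximum on $\partial\kund$ thus corresponds to some iso sphere's summit on $\partial\kund$; passing to the non-empty intermediate portion $\partial\kund\setminus((\I_1\cup\I_2)\cap\partial\kund)$ and repeating the argument handles the case that the sphere produced by the first round was $\I_1$ or $\I_2$.

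The main obstacle is the residual case where the height maximum of the intermediate portion is attained only at a boundary vertex $V_1=\I_1\cap\I_{a_1}$ (or $V_2$), since there one incident sphere is $\I_1$ and the symmetric pair-radii contradiction breaks down. I handle this via the side-pairing structure of $\I_{a_1}$: if $\I_{a_1}$ is self-paired by an involution $h$ (necessarily the case when there is only one intermediate side), then $h$-invariance of $\iso h\cap\partial\kund$ together with $h$'s unique fixed point in $\H$ being $\summit\iso h$ forces this summit into the interior of the arc. If $\I_{a_1}$ is instead paired by a non-involution with a further intermediate side, iterating the height analysis toward $V_2$ and invoking the angle-sum constraints of Corollary~\ref{COR:anglesum} at the boundary vertex cycles rules out the remaining ``staircase'' configurations in which all intermediate summits would be covered by $\I_1$ or $\I_2$.
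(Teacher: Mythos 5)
Your stages (i) and (ii) are sound, and (ii) is in fact more general than what the paper establishes: you obtain $\mittelp{\I_1}<\mittelp{\I_3}<\mittelp{\I_2}$ for \emph{every} relevant $\I_3\notin\{\I_1,\I_2\}$ (modulo the small justification that every isometric sphere lies in $\pr^{-1}([\alpha,\beta]_\R)$, which holds because any point of $\H$ with real part outside $[\alpha,\beta]_\R$ lies in $\kund$, while $\kund$ meets no isometric sphere). The observation that a sphere of maximal radius has its summit on $\partial\kund$ is also correct and provides a genuine shortcut whenever that maximum is attained outside $\{\I_1,\I_2\}$.

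The gap sits in stage (iii), in exactly the case that carries the weight of the lemma. When the maximum of the height along the intermediate portion of $\partial\kund$ is attained only at the two junction vertices $V_1\in\I_1\cap\I_{a_1}$ and $V_2\in\I_2\cap\I_{a_m}$ (these have equal height by~\eqref{ITEM:iso:height}), your closing sentence --- ``iterating the height analysis toward $V_2$ and invoking the angle-sum constraints of Corollary~\ref{COR:anglesum}\dots rules out the remaining staircase configurations'' --- is a statement of intent, not an argument. The angle-sum corollary alone produces a contradiction only when the vertex cycle $C(V_1)$ is exactly $\{V_1,V_2\}$: there both subtended angles exceed $\tfrac{\pi}{2}$ (the intermediate arcs dip below the horizontal through $V_1,V_2$), forcing $\omega=1$ and $\theta_1+\theta_2=2\pi$, which is impossible since each angle is $<\pi$. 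Nothing in your sketch excludes a larger cycle $C(V_1)=\{V_1,V_2,v_3,\dots\}$ whose additional vertices sit at \emph{lower} heights (say at the bottom of the dip); then an angle sum of $2\pi$ is easily realized and no contradiction arises. The missing ingredient is Lemma~\ref{LEM:vertexcycles}: since $\kund$ is a fundamental domain of Ford type, all vertices in a cycle share the same imaginary part, so any third vertex of $C(V_1)$ lies in the interior of the intermediate portion at the maximal height --- which already contradicts your hypothesis that the maximum is attained only at the endpoints (equivalently, its angle is $<\pi$, pushing the boundary above the horizontal nearby and producing an interior summit). This equal-height argument is the heart of the paper's proof of the hard case and is absent from yours. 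A secondary inaccuracy: at a cut-off vertex the summit of $\I_a$ lands in the interior of \emph{some} isometric sphere (by the connectedness of $\I_a\cap\partial\kund$ from~\eqref{ITEM:iso:relpart}), not necessarily of the other sphere $\I_b$ incident at that vertex, so the ``contradictory pair $r(\I_a)<r(\I_b)$ and $r(\I_b)<r(\I_a)$'' does not follow as stated; one only gets that neither radius is maximal.
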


\begin{proof}
Assume~$\Gamma$ to be non-cyclic.
Let~$\delta$ be a curve in~$\C$ that traces out the boundary of~$\kund$ in~$\overline{\H}^{\geo}$ between~$\alpha$ and~$\beta$, i.e., let~$a,b\in\R,a<b,$ and let
\[
\delta:I\coloneqq[a,b]_\R\too\C
\]
be a continuous map such that~$\delta(a)=\alpha$,~$\delta(b)=\beta$, and~$\delta(x)\in\partial_\geo\kund$ for every~$x\in(a,b)_\R$.
In particular, because of the boundary structure of~$\kund$,~$\delta$ can be chosen to be injective, for instance by imposing~$\delta$ to be piece-wise parameterized by arc length (w.r.t. the Euclidian metric in~$\C$).
Then~$\delta(I)$ is piece-wise given by either intervals in~$\R$ or geodesic segments in~$\H$.
The function
\[
f:I\ni x\mapstoo\Ima\delta(x)\in\R
\]
is then continuous as well and for every strict local maximum\footnote{by~\emph{strict local maximum} we mean a point~$x_0$ for which there exists an~$\eps>0$ such that~$f(x_0)>f(x)$ for all~$x\in\bigl((x_0-\eps,x_0+\eps)_\R\setminus\{x_0\}\bigr)\cap I$.}~$x_0$ of~$f$ the point~$\delta(x_0)$ coincides with the summit of some relevant isometric sphere.
Furthermore, all non-strict local maxima of~$f$ necessarily are likewise zeros of it.

We start by considering the case~$\I_1\subseteq\partial\kund$.
The combination of~\eqref{ITEM:iso:height} with~\eqref{ITEM:iso:radii} and~\eqref{ITEM:iso:relpart} shows that~$\I_2\subseteq\partial\kund$ as well.
Since~$\Gamma$ is non-cyclic,~$\REL{\Gamma}$ consists of further spheres besides~$\I_1$ and~$\I_2$.
The following argument shows that for at least one of those additional spheres its summit must be contained in~$\partial\kund$.
By construction we find~$x_1,x_2\in(a,b)_\R$ such that
\[
\geo\I_1\setminus\{\alpha\}=\{\delta(x_1)\}\qquad\text{and}\qquad\geo\I_2\setminus\{\beta\}=\{\delta(x_2)\}\,.
\]
Define~$D$ to be the (Euclidean) line segment connecting~$\delta(x_1)$ to~$\delta(x_2)$, i.e.,
\begin{equation}\label{EQN:Delta}
D\coloneqq\defset{(1-t)\delta(x_1)+t\delta(x_2)}{t\in(0,1)}\,.
\end{equation}
Then~$\overline{D}=[\delta(x_1),\delta(x_2)]_\R\subseteq\R$ and, by assumption, the strip~$\pr^{-1}(D)$ contains elements of~$\REL{\Gamma}$ as subsets.
Thus, there exists a point~$x_3\in(x_1,x_2)_\R$ such that~$f(x_3)>0$.
Since~$f$ is continuous, it assumes its maximum in the compact interval~$[x_1,x_2]_\R$.
By the above this implies the existence of~$\I_3\in\REL{\Gamma}$ such that
\[
\summit{\I_3}\in\partial\kund\cap\pr^{-1}(D)\,.
\]
This yields the assertion in the case~$\I_1\subseteq\partial\kund$.

We proceed by assuming that~$\I_1\nsubseteq\partial\kund$.
In this case we find~$x_1,x_2\in(a,b)_\R$ such that~$\delta(x_1)\in\I_1$,~$\delta(x_2)\in\I_2$, but
\[
\delta((x_1,x_1+\eps)_\R)\cap\I_1=\delta((x_2-\eps,x_2)_\R)\cap\I_2=\varnothing\,.
\]
Since~$\Gamma$ is assumed to be non-cyclic we have~$x_1\ne x_2$.
By defining~$D$ again as in~\eqref{EQN:Delta} this time we obtain a horizontal line segment in the upper half-plane (see~\eqref{ITEM:iso:height}).
We obtain another continuous path in~$\C$ by connecting the segments~$\delta((a,x_1)_\R)$,~$D$, and~$\delta((x_2,b)_\R)$.
The angles this path assumes at the points~$\delta(x_1)$ and~$\delta(x_2)$ (measured above the curve) are equal and we denote them by~$\vartheta$.
Necessarily,
\begin{equation}\label{EQN:Deltaangle}
\frac{\pi}{2}<\vartheta<\pi\,.
\end{equation}
By assumption~$\I_1\nsubseteq\partial\kund$ there exists~$\I_3\in\REL{\Gamma}\setminus\{\I_1,\I_2\}$ such that
\[
\delta([x_1,x_1+\eps)_\R)\in\I_3\quad\text{for }\eps>0\text{ sufficiently small.}
\]
The points~$v_1\coloneqq\delta(x_1)$ and~$v_2\coloneqq\delta(x_2)$ are finite vertices of~$\kund$.
Denote the angles~$\kund$ subtends at~$v_1$ and~$v_2$ as before by~$\theta_1$ and~$\theta_2$ respectively.
Clearly,
\begin{equation}\label{EQN:anglestheta}
\theta_j<\pi\qquad\text{for }j\in\{1,2\}\,.
\end{equation}
If~$\theta_1<\vartheta$, then~$\delta((x_1,x_1+\eps)_\R)$ lies above~$D$.
Thus, we can proceed as above to find a strict local maximum of~$f$ in the interval~$[x_1,x_2]_\R$ and thereby a summit of some relevant isometric sphere contained in~$\partial\kund\cap\pr^{-1}(\pr(D))$.
If~$\theta_1=\vartheta$, then~$D$ is contained in a line tangent to~$\I_3$.
This means~$\delta(x_1)$ is the summit of~$\I_3$ and thus fulfills the assertion, since it clearly is contained in~$\partial\kund$.
The same arguments apply if the angle~$\theta_2$ falls below or equals~$\vartheta$.
This leaves the case of both these angles exceeding~$\vartheta$, which, because of~\eqref{EQN:Deltaangle} and~\eqref{EQN:anglestheta}, implies
\[
\pi<\theta_1+\theta_2<2\pi\,.
\]
This means that the equality in Corollary~\ref{COR:anglesum} can only be fulfilled for~$\omega=1$ and the cycle~$C(v_1)$ consisting of more vertices besides~$v_1,v_2$.
Let~$v_3\in C(v_1)\setminus\{v_1,v_2\}$ and denote by~$\theta_3$ the angle~$\kund$ subtends at~$v_3$.
We then have
\begin{equation}\label{EQN:theta3}
\theta_3\leq 2\pi-(\theta_1+\theta_2)<\pi\,.
\end{equation}
We further find~$x_3\in(x_1,x_2)_\R$ such that~$\delta(x_3)=v_3$.
Since~$\kund$ is a Ford fundamental domain and~$v_1\in\partial\kund$, Lemma~\ref{LEM:vertexcycles} yields~$v_3\in D$.
But now, because of~\eqref{EQN:theta3}, we find a small~$\eps>0$ such that at least one of the segments~$\delta((x_3-\eps,x_3)_\R)$ or~$\delta((x_3,x_3+\eps)_\R)$ lies above~$D$.
Hence, by the same argument as before, we find a strict local maximum of~$f$, either in~$[x_1,x_3]_\R$, or in~$[x_2,x_3]_\R$.
This yields the assertion in the case~$\I_1\nsubseteq\partial\kund$.

In order to show validity of~\eqref{EQN:Resummits}, assume it is not the case.
Because of symmetry it suffices to consider the case~$\Rea\summit{\I_3}\leq\Rea\summit{\I_1}$.
Since~$\summit{\I_3}\in\partial\kund$, we have~$\summit{\I_3}\notin\mathrm{int\,}\I_1$.
But, because of convexity, this implies that~$\geo\I_3\cap(-\infty,\alpha)_\R\ne\varnothing$, contradicting the choice of~$\I_1$.
This finishes the proof.
\end{proof}

\begin{proof}[Proof of Proposition~\ref{LEM:Anotempty}]
Because of Example~\ref{EX:excludedgroups} it suffices to consider groups~$\Gamma$ non-conjugate to a group generated by~$z\mapsto-\tfrac{1}{z}$,~$z\mapsto\lambda z$, for any~$\lambda>1$.

The set~$\REL{\Gamma}$ is finite and contains spheres~$\I_1,\I_2$ such that
\[
\alpha\in\geo\I_1\qquad\text{and}\qquad\beta\in\geo\I_2\,.
\]
Since~$\Gamma$ is assumed to contain hyperbolic elements,~$\REL{\Gamma}$ is not a singleton and thus,~$\I_1\ne\I_2$.
Because of~\eqref{ITEM:iso:biject} there exist uniquely determined~$g_1,g_2\in\Gamma$,~$g_1\ne g_2$, such that~$\I_1=\iso{g_1}$ and~$\I_2=\iso{g_2}$.
Consider the segments of~$\I_1,\I_2$ contributing to the boundary of~$\wund$, that are the sets
\[
b_j\coloneqq\partial\wund\cap\I_j\,,
\]
for~$j=1,2$.
If~$b_1\cap b_2\ne\varnothing$, then~$\REL{\Gamma}=\{\I_1,\I_2\}$.
This is only possible in the case ~$g_1=g_2^{-1}$ and~$g_1$ elliptic.
But this means that~$\Gamma$ is cyclic, generated by an elliptic transformation, and therefore void of hyperbolic elements. Since this contradicts the assumption, we conclude
\begin{align}\label{EQN:relpartsdisjoint}
b_1\cap b_2=\varnothing\,.
\end{align}

From here on we distinguish the cases~$g_1=g_2^{-1}$ and~$g_1\ne g_2^{-1}$, starting with the latter.
Because of~\eqref{ITEM:iso:relpart} the segments~$g_j\act b_j$ are subsets of~$\partial\wund$ as well.
Since~$b_1,b_2$ are geodesic segments with at least one endpoint in~$\partial_{\geo}\H$ and every element of~$\Gamma$ fixes~$\partial_{\geo}\H$, the geodesic segments~$g_1\act b_1, g_2\act b_2$ have one endpoint in~$\partial_{\geo}\H$ as well.
Hence, because of~$g_1\ne g_2^{-1}$ and the choice of~$\I_1,\I_2$, there exists an interval~$I\subseteq(\alpha,\beta)$ representing a funnel of~$\Gamma$ such that
\begin{align}\label{EQN:disjointstrip}
\pr^{-1}(I)\subseteq\wund\quad\text{and}\quad\pr^{-1}(I)\cap\pr^{-1}(\underiso{g_j})=\varnothing\,, 
\end{align}
for~$j=1,2$, with~$\underiso{g_j}$ as in Lemma~\ref{LEM:isolimit}.
Because of~(\ref{ITEM:cex:tessellation}\ref{ITEM:tessellation:structure}) and~\eqref{ITEM:cex:uniqueness} there exist two pairs~$(k_1,h_1),(k_2,h_2)\in A\times\Gamma_\wund$ such that~$h_j\act\base{\Cs{\wund,k_j}}$ is a vertical side of~$\pr^{-1}(I)$ for~$j=1,2$, and~$h_1\act\mathrm{H}_{\pm}(k_1)=h_2\act\mathrm{H}_{\mp}(k_2)$.
Because of Lemma~\ref{LEM:vertinGamma} we have~$h_1,h_2\in\Gamma$.
Because of Lemma~\ref{LEM:isolimit} and the openness of the sets~$\underiso{g_j}$ there exist~$x_1,x_2\in\Lambda(\Gamma)$ as well as~$\eps>0$ such that
\[
(x_j-\eps,x_j+\eps)_\R\subseteq\underiso{g_j}\,,
\]
for~$j=1,2$.
The density of~$E(\Orbi)$ in~$\Lambda(\Gamma)\times\Lambda(\Gamma)$ implies the existence of a geodesic~$\gamma\in\Geo_{\Per,\Gamma}(\H)$ for which we have
\begin{align}\label{EQN:endpointsunderiso}
\gamma(-\infty)\in(x_1-\eps,x_1+\eps)_\R\qquad\text{and}\qquad\gamma(+\infty)\in(x_2-\eps,x_2+\eps)_\R\,.
\end{align}
The combination of~\eqref{EQN:disjointstrip} and~\eqref{EQN:endpointsunderiso} implies
\[
(\gamma(+\infty),\gamma(-\infty))\in h_j\act I_{k_j}\times h_j\act J_{k_j}
\]
for one~$j\in\{1,2\}$.
Combining this with~\eqref{ITEM:cex:vectors} implies~$k_j\in \neindex$.

Now assume that~$g\coloneqq g_1=g_2^{-1}$.
If~$\Gamma$ is cyclic, meaning~$\Gamma=\left<g\right>$, then there again exists an interval~$I\subseteq(\alpha,\beta)_\R$ which fulfills~\eqref{EQN:disjointstrip}.
Using~(\ref{ITEM:cex:tessellation}\ref{ITEM:tessellation:structure}) and Lemma~\ref{LEM:vertinGamma} we find~$(k,h)\in A\times\Gamma$ such that every representative of~$\alpha(g)$ intersects~$h\act\Cs{\wund,k}$, which implies~$k\in A'$.

Hence, we may assume that~$\Gamma$ is non-cyclic.
Let~$\xi_1,\xi_2\in\overline{\H}^{\geo}$ be such that
\[
\overline{b_1}^{\geo}=[\alpha,\xi_1]_{\H}\qquad \text{and}\qquad\overline{b_2}^{\geo}=[\xi_2,\beta]_{\H}\,.
\]
Because of \eqref{ITEM:iso:relpart} we have~$g_1\act b_1=b_2$, wherefore~\eqref{ITEM:iso:height} implies
\begin{align}\label{EQN:ImaIma}
\Ima\xi_1=\Ima\xi_2\,.
\end{align}
Because of~\eqref{EQN:relpartsdisjoint} the boundary of~$\wund$ consists of further segments besides~$b_1,b_2$, and
because of Lemma~\ref{LEM:thirdsphere} at least one of these segments, say~$b_3$ induced by the relevant isometric sphere~$\I_3$, contains the summit~$s=\summit{\I_3}$.
Application of~\eqref{ITEM:cex:summits} now yields a pair~$(k,h)\in A\times\Gamma_\wund$ such that~$h\act\base{\Cs{\wund,k}}$ is vertical with~$s\in h\act\base{\Cs{\wund,k}}$.
Again, Lemma~\ref{LEM:vertinGamma} yields~$h\in\Gamma$.

By following the structure of the argument above and taking the~$\Gamma$-invariance of~$\Lambda(\Gamma)$ into account, we see that it suffices to show that~$\mittelp{\I_3}=\pr(s)$ separates at least some points in~$\underiso{g}\cap\Lambda(\Gamma)$ from at least some points in~$\underiso{g^{-1}}\cap\Lambda(\Gamma)$, or in other words,
\begin{equation}\label{EQ:leftrightnonempty}
\underiso{g}\cap\Lambda(\Gamma)\cap h\act J_k\ne\varnothing\qquad\text{and}\qquad\underiso{g^{-1}}\cap\Lambda(\Gamma)\cap h\act I_k\ne\varnothing\,.
\end{equation}
In order to see this we distinguish several cases, starting with the assumption that~$g$ is hyperbolic.
Then~$\iso{g}\cap\iso{g^{-1}}=\varnothing$, and thus,~$\underiso{g}\cap\underiso{g^{-1}}=\varnothing$.
If~$\mittelp{\I_3}\notin\underiso{g}\cup\underiso{g^{-1}}$, then~\eqref{EQ:leftrightnonempty} is immediately clear.
Thus, assume that this is not the case.
Because of symmetry it entails no loss of generality to assume that~$\mittelp{\I_3}\in\underiso{g}$.
Then~$\underiso{g^{-1}}\subseteq h\act I_k$ by construction, and hence Lemma~\ref{LEM:isolimit} implies that~$\Lambda(\Gamma)\nsubseteq h\act J_k$.
Suppose for contradiction that~$\Lambda(\Gamma)\subseteq h\act I_k$.
Denote by~$g_3$ the generator of~$\I_3$, which is unique by~\eqref{ITEM:iso:biject}.
By construction,~$g_3\notin\{g,g^{-1}\}$.
The transformation~$g_3$ cannot be an involution, for then~$g_3h\act I_k= h\act J_k$.
And since $\Gamma$-action preserves~$\Lambda(\Gamma)$, we obtain a contradiction to the assumption.
Therefore,~$\mittelp{g_3}\ne\mittelp{g_3^{-1}}$, and we show that
\begin{equation}\label{EQ:cg31loc}
\mittelp{g_3^{-1}}\in\bigl(\mittelp{g_1},\mittelp{g_2}-r\bigr)\,,
\end{equation}
where~$r\coloneqq\radius{g_1}=\radius{g_2}$.
To that end we first show that
\begin{equation}\label{EQ:cg3cg31loc}
\{\mittelp{g_3},\mittelp{g_3^{-1}}\}\subseteq\bigl(\mittelp{g_1},\mittelp{g_2}\bigr)\,.
\end{equation}
Let~$x\in\{\mittelp{g_3},\mittelp{g_3^{-1}}\}$.
Since~$r'\coloneqq\radius{g_3}=\radius{g_3^{-1}}$, we then have
\[
x+\i r'\in\{\summit{g_3},\summit{g_3^{-1}}\}\,.
\]
Since~$g_3\act\summit{g_3}=\summit{g_3^{-1}}$, we obtain from~$\summit{\I_3}\in b_3$, and~\eqref{ITEM:iso:relpart} that
\[
\{\summit{g_3},\summit{g_3^{-1}}\}\subseteq\partial\wund\,.
\]
Hence, in particular neither summit is contained in~$\intiso{g}\cup\intiso{g^{-1}}$.
Since~$\I_3\notin\{\I_1,\I_2\}$, it follows from~\eqref{ITEM:iso:biject} that neither summit is contained in~$\I_1\cup\I_2$ either.
But then, for~$x\leq\mittelp{g}$ we find
\[
x-r'<\mittelp{g}-r=\alpha\,,
\]
while for~$x\geq\mittelp{g^{-1}}$ we find
\[
x+r'>\mittelp{g^{-1}}+r=\beta\,,
\]
Thus, either case entails a contradiction to the choice of~$\I_1,\I_2$.
This yields~\eqref{EQ:cg3cg31loc}.
By the assumption~$\mittelp{g_3}\in\underiso{g}$ the geodesic arc~$\base{\Cs{\wund,k}}=(\mittelp{g_3},\infty)_\H$ intersects~$\I_1=\iso{g}$ in exactly one point in~$\H$, say~$\xi_3$.
Therefore, the geodesic arc~$g\act\base{\Cs{\wund,k}}$ intersects~$\I_2=\iso{g^{-1}}=g\act\I_1$ exactly in~$g\act\xi_3$.
Since we have
\[
\pr(\xi_3)\in(\mittelp{g},\mittelp{g}+r)\,,
\]
by~\eqref{EQ:cg3cg31loc} and~$g\act(\mittelp{g}+r)=\mittelp{g^{-1}}-r$ we find~$\pr(g\act\xi_3)\in(\mittelp{g^{-1}}-r,\mittelp{g^{-1}})$.
By combining this with~$g\act\infty=\mittelp{g^{-1}}$, we conclude that~$g\act\base{\Cs{\wund,k}}$ is non-vertical and
\[
\bigl(\mittelp{g^{-1}}-r,\mittelp{g^{-1}}\bigr)\varsubsetneq\pr\bigl(g\act\base{\Cs{\wund,k}}\bigr)\,.
\]
Thus, if~$\mittelp{g_3^{-1}}\in(\mittelp{g^{-1}}-r,\mittelp{g^{-1}})$, then, because of
\begin{equation}\label{EQ:g3CkWk}
g_3\act\base{\Cs{\wund,k}}=\bigl(g_3\act\mittelp{\I_3},\infty\bigr)_\H=\bigl(\infty,\mittelp{g_3^{-1}}\bigr)_\H\,,
\end{equation}
the geodesic arcs~$g_3\act\base{\Cs{\wund,k}}$ and~$g_1\act\base{\Cs{\wund,k}}$ intersect each other without coinciding.
Since~$\Gamma\subseteq\Gamma_\wund$ and~$\BrS_\wund$ is a set of branches for the geodesic flow on~$\Orbi_\wund$, this yields a contradiction by violation of~\eqref{BP:disjointunion}.
Because of~\eqref{EQ:cg3cg31loc} this yields~\eqref{EQ:cg31loc}.
Now, by combination of~\eqref{EQ:cg31loc} with
\[
g_3h\act I_k=g_3\act\bigl(\mittelp{g_3},+\infty\bigr)=\bigl(-\infty,\mittelp{g_3^{-1}}\bigr)\,,
\]
the assumptions~$\underiso{g^{-1}}\subseteq h\act I_k$,~$\mittelp{g_3}\in\underiso{g}$, and~$\underiso{g}\cap\underiso{g^{-1}}=\varnothing$, and the identity~\eqref{EQ:g3CkWk}, we infer
\[
\underiso{g^{-1}}\subseteq g_3h\act J_k\,.
\]
Hence, the same argument which showed that~$g_3$ cannot be an involution again yields a contradiction.
Hence,~$\Lambda(\Gamma)\subseteq h\act I_k$ cannot hold true, which in turn implies~\eqref{EQ:leftrightnonempty}.
This yields the assertion in the case that~$g$ is hyperbolic.

Now assume that~$g$ is elliptic of order~$\sigma$.
Since~$\iso{g}\ne\iso{g^{-1}}$ by assumption, we have~$\sigma\geq3$.
Hence, the angle between~$\iso{g}$ and~$\iso{g^{-1}}$ at the fixed point~$\fixp{}{g}$ exceeds~$\tfrac{2\pi}{3}$ (measured above the spheres).
Since~$\wund$ is geometrically finite, we may enumerate its sides as~$a_1,\dots,a_m$ from left to right, i.e., such that
\[
b_1=a_1\,,\quad a_m=b_2\,,\quad\text{and}\quad a_i\cap a_{i+1}\ne\varnothing\,,
\]
for~$i=1,\dots,m-1$.
Analogously, we may enumerate the elements of~$V_\wund$, the finite vertices of~$\wund$, by~$v_1,\dots,v_{m-1}$ such that~$\{v_i\}=a_i\cap a_{i+1}$, for all~$i$.
Finally, denote the angle~$\wund$ subtends at~$v_i$ by~$\theta_i$.
Since we have~$\iso{g}\cap\iso{g^{-1}}\ne\varnothing$ and
\[
\pr(a_i)\subseteq\underiso{g}\cup\underiso{g^{-1}}
\]
for every~$i\in\{2,\dots,m-1\}$,
we conclude that~$\I_{a_i}\cap\I_\iota\ne\varnothing$, for~$\I_{a_i}$ the relevant isometric sphere inducing the segment~$a_i$ and~$\iota\in\{1,2\}$.
Since~$v_i\in\overline{\extiso{g}\cap\extiso{g^{-1}}}$ for every~$i$, this implies
\begin{align}\label{EQN:estitheta}
\frac{2\pi}{3}<\theta_i<\pi
\end{align}
for all~$i$.
Consider the vertex cycle~$C(v_1)=\{v_{i_1},\dots,v_{i_\ell}\}$ with~$v_{i_1}=v_1$.
Then
\[
g\act v_1=v_{m-1}\in C(v_1)
\]
and hence~$\ell>1$.
Because of Corollary~\ref{COR:anglesum} there exists~$\omega\in\N$ such that
\[
\frac{2\pi}{\omega}=\theta(C(v_1))=\sum_{\kappa=1}^\ell\theta_{i_\kappa}\stackrel{\eqref{EQN:estitheta}}{>}\frac{2\ell\pi}{3}\,,
\]
which implies~$\ell\omega<3$.
Since~$\ell,\omega\in\N$ and~$\ell>1$, this leaves
\[
(\ell,\omega)=(2,1)
\]
As the only possible configuration.
But this implies
\[
C(v_1)=\{v_1,v_{m-1}\}\qquad\text{and}\qquad\theta_1+\theta_{m-1}=2\pi\,,
\]
which means that at least one of the two angles equals or exceeds~$\pi$, in violation of the second relation in~\eqref{EQN:estitheta}.
Hence, this final case is contradictory and the proof is finished.
\end{proof}

In the proof of Proposition~\ref{LEM:Anotempty}, for any given constellation, we identified a hyperbolic transformation~$g\in\Gamma$ with fixed points~$\fixp{+}{g}$ and~$\fixp{-}{g}$ sufficiently far apart such that there exists~$k\in A$ and~$h\in\Gamma_\wund$ for which~$h\act\Cs{\wund,k}$ is intersected by~$\alpha(g)$.
This then yielded~$k\in\neindex$, and because~$(k,h)$ could be chosen such that~$h\act\base{\Cs{\wund,k}}$ is vertical and, necessarily,
\[
\pr\bigl(h\act\base{\Cs{\wund,k}}\bigr)\in(a',b')\,,
\]
Lemma~\ref{LEM:vertinGamma} yields~$h\in\Gamma$.
The same argumentation also applies for~$g^{-1}$, with the roles of~$g_1$ and~$g_2$ in the proof of Proposition~\ref{LEM:Anotempty} interchanged.
Hence, we obtain a second branch copy~$h'\act\Cs{\wund,k'}$,~$(k',h')\in\neindex\times\Gamma$, pointing in the opposite direction of~$h\act\Cs{\wund,k}$, i.e.,
\[
h\act I_{\wund,k}=(h\act x_k,+\infty)\qquad\text{and}\qquad h'\act I_{\wund,k'}=(-\infty,h'\act x_{k'})\,.
\]
Therefore, the union~$h\act I_{\wund,k}\cup h'\act I_{\wund,k'}$ covers~$\R$ except, perhaps, for a bounded interval.
Since
\[
\bigl(\fixp{+}{g},\fixp{-}{g}\bigr)\in h'\act J_{\wund,k'}\times h'\act I_{\wund,k'}\,,
\]
iterated application of~$g$ contracts~$h'\act\base{\Cs{\wund,k'}}$ towards~$\fixp{+}{g}$.
Or in other words, there exists~$n\in\N$ such that
\[
g^nh'\act\base{\Cs{\wund,k'}}\subseteq h\act\Plussp{k}\,,
\]
with~$\Plussp{k}$ the half-space from~\eqref{BP:pointintohalfspaces}.
This means
\[
h\act J_{\wund,k}\subseteq g^nh'\act I_{\wund,k'}\qquad\text{and}\qquad g^nh'\act J_{\wund,k'}\subseteq h\act I_{\wund,k}\,, 
\]
which in turn yields the following result.

\begin{cor}\label{COR:backsidegamma}
There exist~$j,k\in A'$ and~$g,h\in\Gamma$ such that
\[
\R=g\act I_{\wund,j}\cup h\act I_{\wund,k}\,.
\]
\end{cor}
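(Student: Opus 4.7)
The plan is to extract more from the proof of Proposition~\ref{LEM:Anotempty} than merely the nonemptyness of $\neindex$. Looking back at the argument, in each case treated there (hyperbolic $g$, cyclic $\Gamma$, and non-cyclic with $g_1 = g_2^{-1}$) the pair $(k,h)\in A\times\Gamma_\wund$ was obtained with $h\act\base{\Cs{\wund,k}}$ vertical and contained in the strip $\pr^{-1}((\alpha',\beta'))$, so that Lemma~\ref{LEM:vertinGamma} upgrades $h$ to an element of $\Gamma$. The argument was essentially symmetric in the two outermost isometric spheres $\I_1,\I_2$ and in their associated hyperbolic fixed points, since the density of $E(\Orbi)$ in $\Lambda(\Gamma)\times\Lambda(\Gamma)$ supplies hyperbolic axes with one endpoint in each of $\underiso{g_1}$ and $\underiso{g_2}$.

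First I would re-run the construction of Proposition~\ref{LEM:Anotempty}, but this time keeping track of the orientation of the resulting branch. By performing the argument once to locate a branch whose half-space $h\act\Plussp{k}$ opens to the right of its base and once with the opposite orientation, I obtain two pairs $(j,g),(k,h)\in\neindex\times\Gamma$ with the property
\[
g\act I_{\wund,j}=(g\act x_j,+\infty)\qquad\text{and}\qquad h\act I_{\wund,k}=(-\infty,h\act x_k)\,.
\]
If $h\act x_k \ge g\act x_j$, the union already exhausts $\R$ and the corollary follows.

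Otherwise there is an uncovered bounded interval $[h\act x_k,g\act x_j]\subseteq\R$. To close it I would invoke the hyperbolic element $g'\in\Gamma$ whose axis $\alpha(g')$ was used to populate $h\act\Cs{\wund,k}$ in the previous step; by construction its repelling fixed point lies in $h\act J_{\wund,k}$ while its attracting fixed point lies in $h\act I_{\wund,k}$, and moreover the attracting fixed point can be arranged (via the density argument) to lie in $g\act\Plussp{j}$. Iteration of $g'$ then contracts $h\act\base{\Cs{\wund,k}}$ uniformly toward $\fixp{+}{g'}$, so for $n\in\N$ large enough
\[
(g')^n h\act\base{\Cs{\wund,k}}\subseteq g\act\Plussp{j}\,,
\]
which translates into the inclusions $h\act J_{\wund,k}\subseteq (g')^n h\act I_{\wund,k}$ and $(g')^n h\act J_{\wund,k}\subseteq g\act I_{\wund,j}$. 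Replacing $h$ by $(g')^n h\in\Gamma$ therefore expands the ``right-opening'' interval until it overlaps $g\act I_{\wund,j}$, yielding the required cover of $\R$.

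The main subtlety I anticipate is the bookkeeping in the first step: one must verify that the case analysis of Proposition~\ref{LEM:Anotempty} can genuinely be executed symmetrically, so that the two resulting branches are guaranteed to point into \emph{opposite} half-spaces and both sit in $\Gamma$ (not merely in $\Gamma_\wund$). This hinges precisely on Lemma~\ref{LEM:vertinGamma} together with the vertical-base structure furnished by~\eqref{ITEM:cex:tessellation} and~\eqref{ITEM:cex:summits}. Once this is secured, the remaining covering argument is an elementary consequence of the north–south dynamics of $g'$ on $\partial_\geo\H$.
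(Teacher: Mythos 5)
Your overall strategy coincides with the paper's: run the argument of Proposition~\ref{LEM:Anotempty} twice, symmetrically in the two outermost isometric spheres, to obtain two oppositely oriented branch copies indexed by~$\neindex\times\Gamma$ (with membership in~$\Gamma$ secured by Lemma~\ref{LEM:vertinGamma} via the vertical bases), and then, if their $I$-intervals leave a bounded gap, use north--south dynamics of a hyperbolic element of~$\Gamma$ to translate one copy into the positive half-space of the other. The first half of your argument is exactly what the paper does.

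The contraction step, however, fails as written. You iterate the element~$g'$ that \emph{populates the branch being moved}, namely the left-opening branch~$(k,h)$ with~$h\act I_{\wund,k}=(-\infty,h\act x_k)_\R$; by construction of Proposition~\ref{LEM:Anotempty} this forces~$\fixp{+}{g'}\in h\act I_{\wund,k}$. Iterating~$g'$ therefore contracts~$h\act\base{\Cs{\wund,k}}$ towards a point of~$(-\infty,h\act x_k)_\R$, i.e.\ it drags the branch further to the \emph{left}, away from~$g\act\Plussp{j}$, whose boundary interval is~$(g\act x_j,+\infty)_\R$ with~$g\act x_j>h\act x_k$. Your additional stipulation that~$\fixp{+}{g'}$ lie over~$g\act\Plussp{j}$ is incompatible with~$\fixp{+}{g'}\in h\act I_{\wund,k}$, since these two intervals are disjoint precisely in the case you are treating, so the claimed inclusion~$(g')^nh\act\base{\Cs{\wund,k}}\subseteq g\act\Plussp{j}$ cannot be reached this way. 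The paper avoids this by iterating the hyperbolic element attached to the \emph{unmoved} branch: the second branch arises from the symmetric argument applied to the inverse element, whose axis is the same geodesic reversed, so that this single element has its attracting fixed point simultaneously in the unmoved branch's $I$-interval and in the moved branch's $J$-interval; its powers then push the left-opening branch into the right-opening branch's positive half-space. Relatedly, your first displayed inclusion should compare the moved branch with the \emph{other} branch, i.e.\ read~$g\act J_{\wund,j}\subseteq (g')^nh\act I_{\wund,k}$ rather than~$h\act J_{\wund,k}\subseteq(g')^nh\act I_{\wund,k}$. With the contracting element chosen correctly, the remainder of your argument goes through and is identical to the paper's.
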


Recall the transition sets~$\Trans{\wund}{.}{.}$ associated to~$\BrS_\wund$ by~\eqref{BP:intervaldecomp}.
The following lemma is key.

\begin{lemma}\label{LEM:transGamma}
For every choice of~$j,k\in \neindex$ we have~$\Trans{\wund}{j}{k}\subseteq\Gamma$.
\end{lemma}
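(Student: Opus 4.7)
The plan is to realize the given $g\in\Trans{\wund}{j}{k}$ as the cumulative product of side-pairings traced out by a $\Gamma$-periodic geodesic, and then to trap that geodesic inside the $\Gamma$-part of the $\Gamma_\wund$-tiling of $\H$ by translates of $\wund$.

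First I would establish the inclusion $N(\Gamma)\subseteq\Gamma\act\overline{\wund}$, where $N(\Gamma)$ denotes the convex hull of $\Lambda(\Gamma)$ in $\H$. The choice of $\alpha,\beta,\alpha',\beta'$ forces $\Lambda(\Gamma)\subseteq[\alpha,\beta]_\R\subsetneq(\alpha',\beta')_\R$, and therefore $N(\Gamma)\subseteq\pr^{-1}((\alpha',\beta')_\R)$, while $\kund\setminus\wund\subseteq\pr^{-1}(\R\setminus(\alpha',\beta')_\R)$ by~\eqref{EQDEF:wund}. Hence $N(\Gamma)\cap(\kund\setminus\wund)=\varnothing$, and $\Gamma$-invariance of $N(\Gamma)$ promotes this to $N(\Gamma)\cap\Gamma\act(\kund\setminus\wund)=\varnothing$. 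Combined with $\Gamma\act\overline{\kund}=\H$ and the essentially-disjoint decomposition $\kund=\wund\cup(\kund\setminus\wund)$, this yields the claim, so every geodesic with both endpoints in $\Lambda(\Gamma)$ is confined to $\Gamma\act\overline{\wund}$.

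Using $j,k\in\neindex$ I would next produce a $\Gamma$-periodic geodesic $\gamma$ that realizes the transition $g$. The set of endpoint pairs $(x,y)$ realizing $g$ in the $\Gamma_\wund$-dynamics is the open region $(g\act I_{\wund,k})\times(g\act J_{\wund,k})\subseteq I_{\wund,j}\times J_{\wund,j}$, and both $I_{\wund,k}\cap\Lambda(\Gamma)$ and $J_{\wund,k}\cap\Lambda(\Gamma)$ are non-empty. Combining the density of $E(\Orbi)$ in $\Lambda(\Gamma)\times\Lambda(\Gamma)$ with Corollary~\ref{COR:backsidegamma}, I would argue that this region meets $\Lambda(\Gamma)\times\Lambda(\Gamma)$ and then extract a hyperbolic $h_0\in\Gamma$ whose axis $\gamma=\alpha(h_0)$ realizes $g$. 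I expect this step to be the main obstacle: a priori the open region in question might sit inside a gap of the Cantor-like set $\Lambda(\Gamma)\times\Lambda(\Gamma)$, and ruling this out likely requires a careful combination of Corollary~\ref{COR:backsidegamma} with Lemma~\ref{LEM:vertinGamma} together with a detailed analysis of the tile of $\mathscr{B}$ containing the transition arc from $\base{\Cs{\wund,j}}$ to $g\act\base{\Cs{\wund,k}}$.

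Once $\gamma\in\Geo_{\Per,\Gamma}(\H)\subseteq N(\Gamma)\subseteq\Gamma\act\overline{\wund}$ has been secured, I would trace $\gamma$ from its crossing of $\base{\Cs{\wund,j}}$ to its crossing of $g\act\base{\Cs{\wund,k}}$ through the fundamental-domain tessellation $\Gamma_\wund\act\overline{\wund}$ of $\H$. Each side crossing multiplies the current cumulative element on the right by one of the side-pairing generators of $\Gamma_\wund$, namely either an element of $\Gamma_{\RELL}\subseteq\Gamma$ or $t_\lambda^{\pm1}$. Since $\gamma$ never leaves $\Gamma\act\overline{\wund}$ and $\Gamma\cap\langle t_\lambda\rangle=\{\id\}$ (because $\Gamma$ has no parabolic elements), no $t_\lambda^{\pm1}$ factor can occur along $\gamma$; hence $g$, being the composition of the side-pairings encountered, is a product of $\Gamma_{\RELL}$-elements, and $g\in\Gamma$.
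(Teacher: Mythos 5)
Your strategy breaks down at the step you yourself flag as ``the main obstacle,'' and the problem there is not a technical nuisance but a circularity at the core of the approach. The transition set~$\Trans{\wund}{j}{k}$ is defined through the dynamics of~$\Gamma_\wund$: an element~$g$ belongs to it as soon as \emph{some} vector~$\nu\in\Cs{\wund,j}$ with endpoints in~$\wh\R_{\st,\wund}\times\wh\R_{\st,\wund}$ has its next intersection with~$\Gamma_\wund\act\bigcup\BrS_{\wund}$ on~$g\act\Cs{\wund,k}$. The forward endpoints witnessing a fixed~$g$ sweep out~$g\act I_{\wund,k}$ (within~$\wh\R_{\st,\wund}$), and a priori this set could be contained entirely in~$\Lambda(\Gamma_\wund)\setminus\Lambda(\Gamma)$: membership~$j,k\in\neindex$ only gives~$I_{\wund,k}\cap\Lambda(\Gamma)\ne\varnothing$, and the inclusion~$g\act\bigl(I_{\wund,k}\cap\Lambda(\Gamma)\bigr)\subseteq\Lambda(\Gamma)$ is precisely what you would deduce \emph{from}~$g\in\Gamma$, i.e., from the statement to be proved. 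The density of~$E(\Orbi)$ in~$\Lambda(\Gamma)\times\Lambda(\Gamma)$ and Corollary~\ref{COR:backsidegamma} let you prescribe endpoints of periodic~$\Gamma$-geodesics \emph{inside}~$\Lambda(\Gamma)\times\Lambda(\Gamma)$, but they cannot place an endpoint in~$g\act I_{\wund,k}$ unless you already know that this interval meets~$\Lambda(\Gamma)$. So the existence of a~$\Gamma$-periodic geodesic realizing a given~$g\in\Trans{\wund}{j}{k}$ is essentially equivalent to the lemma, not a stepping stone towards it. (Your first step, the confinement~$N(\Gamma)\subseteq\Gamma\act\overline{\wund}$, is fine, and your third step is also under-justified: the bases~$\base{\Cs{\wund,k}}=(x_k,\infty)_\H$ are complete vertical geodesics, not sides of~$\wund$, and identifying the branch-transition element~$g$ with a cumulative product of side-pairings of the tessellation~$\Gamma_\wund\act\overline{\wund}$ --- rather than of the tessellation~$\mathscr{B}$ from~\eqref{ITEM:cex:tessellation}, which is the one that actually carries the transition data --- would need a separate argument.)

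For comparison, the paper's proof needs neither periodic geodesics nor the limit set. By~(\ref{BP:intervaldecomp}\ref{BP:intervaldecompGgeod}) for~$\Gamma_\wund$ and property~\eqref{ITEM:cex:tessellation}, the segments~$\base{\Cs{\wund,j}}$ and~$g\act\base{\Cs{\wund,k}}$ are two sides of a single tile~$B\in\mathscr{B}$; a case analysis on the shape of~$B$ (strip, hyperbolic triangle, or elliptic polygon) extracts in each case an identity of the form~$g\act\infty=x$ or~$g\act x_k=\infty$ with~$x$ the center of a relevant isometric sphere, and then Lemmas~\ref{LEM:centerofREL} and~\ref{LEM:isoconcentric} together with the normalization argument of Lemma~\ref{LEM:vertinGamma} pin~$g$ down as an element of~$\Gamma$ directly. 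If you carry out the ``detailed analysis of the tile of~$\mathscr{B}$ containing the transition arc'' that you defer to, you will find yourself reproducing exactly this argument, at which point the convex-hull confinement and the geodesic-tracing scaffolding become superfluous.
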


\begin{proof}
Fix~$j\in \neindex$ and let~$k\in \neindex$ be such that~$\Trans{\wund}{j}{k}\ne\varnothing$.
For~$g\in\Trans{\wund}{j}{k}$ consider
\[
b_{(k,g)}\coloneqq g\act\base{\Cs{\wund,k}}=(g\act x_k,g\act\infty)_{\H}\,.
\]
This is a complete geodesic segment contained in the half-space~$\Plussp{j}$.
A priori, it might be vertical or non-vertical.
Since~$x_k\in\{a',b'\}$ implies that one of the sets~$\Iset{\wund,k},\Jset{\wund,k}$ is empty and thus~$k\notin\neindex$ in violation of the assumption, the assertion in the vertical case has already been shown in Lemma~\ref{LEM:vertinGamma}.

Therefore, assume that~$b_{(k,g)}$ is non-vertical.
From~\eqref{ITEM:cex:tessellation} we see that there exists~$B\in\mathscr{B}$ such that~$b_{(k,g)}$ and~$b_{(j,\id)}$ are both sides of~$B$.
If~$B$ were a strip, meaning of the form~$\pr^{-1}(L)$ for some interval~$L\subseteq\R$ as described in~(\ref{ITEM:cex:tessellation}\ref{ITEM:tessellation:strip}), then~$b_{(k,g)}$ would be vertical and we would be in the above case.
Thus,~$B$ is a hyperbolic polyhedron with~$b_{(j,\id)}$ being one of its two vertical sides.
Assume first that~$B$ is a hyperbolic triangle.
Then either
\[
x_j=g\act\infty\,,\qquad\text{or}\qquad x_j=g\act x_k\,.
\]
In the former case, application of Lemma~\ref{LEM:centerofREL} yields~$\iso{g^{-1}}\in\REL{\Gamma_{\wund}}$, and we can proceed as above to conclude~$g\in\Gamma$.
In the latter case, consider the other vertical side of~$B$.
Because of~(\ref{ITEM:cex:tessellation}\ref{ITEM:tessellation:structure}) there exists a pair~$(j',h)\in A\times\Gamma_\wund$ such that this side is given by~$b_{(j',h)}$ and we further have~$h^{-1}g\in\Trans{\wund}{j'}{k}$.
This implies in particular that~$j'\in \neindex$.
We now find either
\[
g\act\infty=h\act x_{j'}\,,\qquad\text{or}\qquad g\act\infty=h\act\infty\,.
\]
The former case implies~$h\in\Gamma_{\wund,\infty}$, which, with the same argument as above, can only hold true if~$h=\id$.
Hence,~$g\act\infty=x_{j'}$ and we argue as before with~$j'$ in place of~$j$.
Because of Lemma~\ref{LEM:isoconcentric} the latter case implies~$g=h$.
Since we further find~$h\act x_{j'}=\infty$, we can argue as before to conclude~$g\in\Gamma$.

Now assume that~$B$ is not a hyperbolic triangle.
Since~$B$ is also assumed to not be a strip, from~(\ref{ITEM:cex:tessellation}\ref{ITEM:tessellation:triangle}) we obtain that each side of~$B$ is of the form~$(h^{\ell}\act\infty,h^{\ell+1}\act\infty)_{\H}$ for some elliptic~$h\in\Gamma_{\wund}$ and~$\ell\in\{0,\dots,\ord(h)-1\}$. In particular, we can choose~$h$ so that
\[
b_{(k,g)}=(h^{\ell'}\act\infty,h^{\ell'+1}\act\infty)_{\H}\qquad\text{and}\qquad h^{-\ell'}g\act\infty=\infty
\]
for some~$\ell'\in\{1,\dots,\ord(h)-2\}$.
Then~$
b_{(j,\id)}=(h^{\iota}\act\infty,\infty)_{\H}
$
for some~$\iota\in\{\pm1\}$, meaning that~$h^{-\iota}\act x_j=\infty$.
As before, this implies~$h\in\Gamma$.
Applying again Lemma~\ref{LEM:isoconcentric} we find~$g=h^{\ell}$ for some~$\ell\in\{0,\dots,\ord(h)-1\}$.
This finishes the proof.
\end{proof}

We are now ready to prove our main result, identifying a set of branches for the geodesic flow on the orbisurface~$\Orbi$ without cusps.
Evidently, the proof makes use of~$\BrS_{\wund}$ being a set of branches for the geodesic flow on~$\Orbi_{\wund}$.
In order to distinguish between the defining properties from Definition~\ref{DEF:setofbranches} in the two different contexts, we denote those fulfilled by~$\BrS_\wund$ with respect to~$\Gamma_\wund$ by~(\ref{BP:closedgeodesicsHtoX}$_{\rwund}$)--(\ref{BP:intervaldecomp}$_{\rwund}$) respectively.

\begin{thm}\label{THM:SoB}
$\BrS_\wund'\coloneqq\defset{\Cs{\wund,j}}{j\in \neindex}$ is a set of branches for the geodesic flow on~$\Orbi$.
\end{thm}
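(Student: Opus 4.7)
The strategy is to verify, one by one, the defining properties (B1)--(B7) of Definition~\ref{DEF:setofbranches} for $\BrS_\wund'$ with respect to $\Gamma$, drawing on the fact that $\BrS_\wund$ satisfies them with respect to $\Gamma_\wund$ (Theorem~\ref{THM:cuspexpSoB}) together with the lemmas of this section. By Proposition~\ref{LEM:Anotempty}, $\neindex \ne \varnothing$, so $\BrS_\wund'$ is a non-empty finite collection. Note also $\wh\R_{\st} = \Lambda(\Gamma)$ since $\Gamma$ contains no parabolic elements.

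I would first dispatch the easy properties. Property (B3) is inherited verbatim from $\BrS_\wund$, since $\base{\Cs{\wund,j}}$ and its half-spaces are unchanged. Property (B6) is inherited from $\BrS_\wund$ using $\Gamma \subseteq \Gamma_\wund$. Property (B5) follows from (\ref{ITEM:cex:vectors}) applied to $(I_{\wund,j}\cap\Lambda(\Gamma)) \times (J_{\wund,j}\cap\Lambda(\Gamma)) \subseteq I_{\wund,j}\times J_{\wund,j}$. For (B2), the endpoint $\infty$ of $\overline{\base{\Cs{\wund,j}}}$ is outside $\Lambda(\Gamma)$ by assumption~(\ref{inftynei}), while $x_j \in \wh\R \setminus \wh\R_{\st,\wund}$ by~(\ref{ITEM:cex:setstructure}), and an easy check using $\Lambda(\Gamma)\subseteq\Lambda(\Gamma_\wund)$ and the disjointness of $\Gamma_\wund\act\infty$ from $\Lambda(\Gamma)$ shows $x_j \notin \Lambda(\Gamma)$. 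Property (B4) is exactly Corollary~\ref{COR:backsidegamma} combined with $\Lambda(\Gamma)\subseteq\R$. Finally, for (B1), I would combine the hypothesis $\Cs{\wund,j,\st}\ne\varnothing$ for $j\in\neindex$ with the density of $E(\Orbi)$ in $\Lambda(\Gamma)\times\Lambda(\Gamma)$ to find a hyperbolic $h\in\Gamma$ with $(\fixp{+}{h},\fixp{-}{h})\in (I_{\wund,j}\cap\Lambda(\Gamma))\times(J_{\wund,j}\cap\Lambda(\Gamma))$, and then apply (B5) to obtain a vector $\nu\in\Cs{\wund,j}$ whose projection to $\Orbi$ is periodic.

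The bulk of the work, and the expected main obstacle, is property (B7). I would define $\Trans{}{j}{k} \coloneqq \Trans{\wund}{j}{k}$ for $j,k\in\neindex$, which by Lemma~\ref{LEM:transGamma} is a subset of $\Gamma$. Part (b) transfers over because the geometric conditions on the geodesic segment $(z,w)_{\H}$ are inherited from $\Gamma_\wund$, and because $\Gamma\act\bigcup\BrS_\wund' \subseteq \Gamma_\wund\act\BrU_\wund$, so avoidance of the larger set entails avoidance of the smaller one. The crux of part (a) is the equality
\[
\bigcup_{k\in\neindex}\bigcup_{g\in\Trans{}{j}{k}} g\act(I_{\wund,k}\cap\Lambda(\Gamma)) = I_{\wund,j}\cap\Lambda(\Gamma)\,,
\]
for which the inclusion $\subseteq$ is immediate from the $\Gamma$-invariance of $\Lambda(\Gamma)$ combined with the $\Gamma_\wund$-version. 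For the reverse inclusion, given $y\in I_{\wund,j}\cap\Lambda(\Gamma)$, I would invoke (B5) to produce $\nu\in\Cs{\wund,j,\st,\Gamma}$ with $\gamma_\nu(+\infty)=y$, and read off the $\Gamma_\wund$-first-return datum $(k,g)\in A\times\Gamma_\wund$ with $g\in\Trans{\wund}{j}{k}$. The task then is to show $k\in\neindex$ and $g\in\Gamma$: the side $b_{(k,g)}=g\act\base{\Cs{\wund,k}}$ is either vertical, in which case Lemma~\ref{LEM:vertinGamma} (possibly combined with Lemma~\ref{LEM:centerofREL} and Lemma~\ref{LEM:RelRell}) forces $g\in\Gamma$ and hence both endpoints of $g^{-1}\nu$ lie in $\Lambda(\Gamma)$; or non-vertical, in which case the tessellation structure~(\ref{ITEM:cex:tessellation}) forces $g\in\Gamma$ by the same case analysis already used in Lemma~\ref{LEM:transGamma}. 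Either way, $g^{-1}\nu\in\Cs{\wund,k,\st,\Gamma}$ and $k\in\neindex$. Disjointness of the union is inherited directly. Part (c) proceeds symmetrically, using the backward return dynamics and the analogous partition provided by the $\Gamma_\wund$-version of (B7)(c).

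The hard step is the case analysis in (B7)(a), namely, forcing the $\Gamma_\wund$-first-return from a vector in $\Cs{\wund,j,\st,\Gamma}$ to stay within a $\neindex$-branch. This is where the fine structure of the cusp expansion (the tessellation $\mathscr{B}$ and the vertical-vs-polyhedral dichotomy) and the results of Section~\ref{SEC:isoms} have to be combined; essentially every lemma of this section is used in service of this step.
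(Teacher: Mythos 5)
Your proposal is correct and follows essentially the same route as the paper's proof: verify \eqref{BP:closedgeodesicsHtoX}--\eqref{BP:intervaldecomp} by transferring the corresponding properties of $\BrS_\wund$ with respect to $\Gamma_\wund$, using Proposition~\ref{LEM:Anotempty}, Corollaries~\ref{COR:limitsetinclude} and~\ref{COR:backsidegamma}, and Lemma~\ref{LEM:transGamma} exactly where the paper does. In fact your treatment of \eqref{BP:intervaldecomp} — in particular the observation that one must force the $\Gamma_\wund$-first-return of a $\Gamma$-star vector to land on an $\neindex$-branch via an element of $\Gamma$, handled through the vertical/non-vertical dichotomy and the tessellation — spells out a step the paper compresses into a single citation of Lemma~\ref{LEM:transGamma}.
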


\begin{proof}
From Proposition~\ref{LEM:Anotempty} and~$\neindex\subseteq A$ we see that~$\BrS_\wund'$ is a finite but non-empty set.
The definition of~$\neindex$ combined with~(\ref{BP:closedgeodesicsHtoX}$_{\rwund}$) further assures validity of~\eqref{BP:closedgeodesicsHtoX}.
Let~$j\in \neindex$. Since~$[\infty]_{\Gamma_\wund}$ is the only cusp of~$\Orbi_\wund$,~\eqref{ITEM:cex:setstructure} implies that the point~$x_j$ either equals the center of some relevant isometric sphere, or it is contained in a representative of a funnel of~$\Orbi_\wund$.
Since~$\infty$ is contained in a representative of a funnel of~$\Orbi$, so is every center of an isometric sphere for~$\Gamma$, by virtue of~\eqref{ITEM:iso:centers}.
Therefore,~\eqref{BP:completegeodesics} follows directly from~(\ref{BP:completegeodesics}$_{\rwund}$).
Property~\eqref{BP:pointintohalfspaces} is immediate from~(\ref{BP:pointintohalfspaces}$_{\rwund}$) and property~\eqref{BP:coverlimitset} follows from Corollary~\ref{COR:backsidegamma} and~$\infty$ being an inner point of~$\Omega(\Gamma)$.
The properties~\eqref{BP:allvectors} and~\eqref{BP:disjointunion} follow from~(\ref{BP:allvectors}$_{\rwund}$) and~(\ref{BP:disjointunion}$_{\rwund}$) respectively, by taking Corollary~\ref{COR:limitsetinclude} into account.
Finally,~\eqref{BP:intervaldecomp} follows from~(\ref{BP:intervaldecomp}$_{\rwund}$) and Lemma~\ref{LEM:transGamma}.
\end{proof}

%==================================================================
\section{Strict transfer operator approaches}\label{SEC:strictTOA}
%==================================================================

Let~$\Orbi$ be a non-compact developable hyperbolic orbisurface with fundamental group~$\Gamma$ fulfilling~\eqref{condA}.
By virtue of either Theorem~\ref{THM:cuspexpSoB} (if~$\Orbi$ has cusps) or Theorem~\ref{THM:SoB} (if it does not) we obtain a set of branches for the geodesic flow on~$\Orbi$.
Theorem~\ref{THM:mainthmPW} now implies that~$\Gamma$ admits a strict transfer operator approach, where the required (slow or fast) transfer operator families are given explicitly by the set of branches (see \cite[Section~3.6 and Section~7]{Pohl_Wab}).
This yields validity of Theorem~\ref{THMA}.

%==================================================================
\section*{Appendix}
%==================================================================

%----------------------------------------------------------------------------------------------------------------------------
\subsection*{Proof of Theorem~\ref{THM:cuspexpSoB}}
%----------------------------------------------------------------------------------------------------------------------------

The finiteness of~$\BrS_{\P}$ follows immediately from~\eqref{ITEM:cex:finiteset}.

Recall the set~$E(\Orbi)$ from~\eqref{EQNDEF:EX} and its density in~$\Lambda(\Gamma)\times\Lambda(\Gamma)$.
Combining this with~\eqref{ITEM:cex:vectors} yields~\eqref{BP:closedgeodesicsHtoX}.

Since~$\infty$ represents a cusp of~$\Orbi$ and is therefore an element of $\wh\R\setminus\wh\R_{\st}$ the property~\eqref{BP:completegeodesics} follows from~\eqref{ITEM:cex:setstructure}.

Property~\eqref{BP:pointintohalfspaces} is the same as the first statement in~\eqref{ITEM:cex:vectors}, while the second statement in~\eqref{ITEM:cex:vectors} yields~\eqref{BP:allvectors}.

Property~\eqref{BP:coverlimitset} follows from~\eqref{ITEM:cex:uniqueness}: Let~$j\in A$ and pick~$(k,g)\in A\times\Gamma$ according to~\eqref{ITEM:cex:uniqueness}.
Then, since~$x_j,\infty\in\widehat\R\setminus\widehat\R_{\st}$,
\[
\wh\R_{\st}\subseteq\wh\R\setminus\{x_j,\infty\}=I_{\P,j}\cup J_{\P,j}=I_{\P,j}\cup g\act I_{\P,k}\,.
\]

Property~\eqref{BP:disjointunion} is a consequence of~\eqref{ITEM:cex:tessellation} and~\eqref{ITEM:cex:uniqueness}: Let~$j,k\in A$ and~$g\in\Gamma$ be such that
\[
\base{\Cs{\P,j}}\cap g\act\base{\Cs{\P,k}}\ne\varnothing\,.
\]
Because of~(\ref{ITEM:cex:tessellation}\ref{ITEM:tessellation:include}) there exist~$B_1,B_2\in\mathscr{B}$ such that~$\base{\Cs{\P,j}}$ and~$g\act\base{\Cs{\P,k}}$ are maximal connected components of~$\partial B_1$ and~$\partial B_2$ respectively.
Because of~\eqref{ITEM:cex:setstructure},~(\ref{ITEM:cex:tessellation}\ref{ITEM:tessellation:structure}), and the exactness of the tessellation provided by~$\mathscr{B}$, this implies
\[
\base{\Cs{\P,j}}=g\act\base{\Cs{\P,k}}\,.
\]
Combining this with~\eqref{ITEM:cex:uniqueness} yields~\eqref{BP:disjointunion}.

Finally, in order to verify~\eqref{BP:intervaldecomp} let~$j\in A$ and let~$\nu\in\Cs{\P,j,\st}$.
Because of~\eqref{ITEM:cex:nextinter} the number~$t_+(\nu)$ is well-defined.
From~\eqref{BP:disjointunion} we infer that the transformation~$g_+(\nu)\in\Gamma$ as well as the index~$k_+(\nu)\in A$ such that
\[
\gamma_\nu'(t_+(\nu))\in g_+(\nu)\act\Cs{\P,k_+(\nu)}
\]
are both uniquely determined.
By construction we have
\[
g_+(\nu)\act\Plusspp{k_+(\nu)}\subseteq\Plusspp{j}\,,
\]
hence,
\begin{align}\label{EQN:Isubset}
g_+(\nu)\act I_{\P,k_+(\nu)}\subseteq I_{\P,j}\,.
\end{align}
For~$k\in A$ we set~$\Cs{\P,j}\vert_k\coloneqq\defset{\nu\in\Cs{\P,j}}{k_+(\nu)=k}$.
Because of the well-definedness of~$k_+(\nu)$ for every~$\nu\in\Cs{\P,j,\st}$ we obtain
\begin{align}\label{EQN:CPCPk}
\Cs{\P,j,\st}=\bigcup_{k\in A}\Cs{\P,j}\vert_k\,,
\end{align}
where the union is clearly disjoint.
We further define
\[
\Trans{}{j}{k}\coloneqq\bigcup_{\nu\in\Cs{\P,j}\!\vert_k}\{g_+(\nu)\}\,.
\]
Then the sets~$\Cs{\P,j}\vert_k$ decompose further as
\begin{align}\label{EQN:CPdecomp}
\Cs{\P,j}\vert_k=\bigcup_{g\in\Trans{}{j}{k}}\defset{\nu}{g_+(\nu)=g}\,.
\end{align}
Again, the union is disjoint.
Because of~\eqref{EQN:Isubset} we further have
\[
J_{\P,j,\st}\subseteq g_+(\nu)\act J_{\P,k_+(\nu),\st}\,.
\]
By combining this with~\eqref{BP:allvectors} and~\eqref{EQN:Isubset} we obtain
\[
\defset{\gamma_\nu(+\infty)}{\nu\in\Cs{\P,j}\vert_k\,,\ g_+(\nu)=g}_{\st}=\defset{\gamma_{g\act\eta}(+\infty)}{\eta\in\Cs{\P,k}}_{\st}\,.
\]
Combination of this with~\eqref{EQN:CPCPk},~\eqref{EQN:CPdecomp}, and~\ref{ITEM:cex:vectors} in turn yields
Combining this with~\eqref{EQN:CPCPk} and the last statement of~\eqref{ITEM:cex:vectors} yields
\begin{align*}
I_{\P,j,\st}&=\defset{\gamma_\nu(+\infty)}{\nu\in\Cs{\P,j}}_{\st}=\bigcup_{k\in A}\defset{\gamma_\nu(+\infty)}{\nu\in\Cs{\P,j}\vert_k}_{\st}\\
&=\bigcup_{k\in A}\bigcup_{g\in\Trans{}{j}{k}}\defset{\gamma_\nu(+\infty)}{\nu\in\Cs{\P,j}\vert_k\,,\ g_+(\nu)=g}_{\st}\\
&=\bigcup_{k\in A}\bigcup_{g\in\Trans{}{j}{k}}\defset{\gamma_{g\act\eta}(+\infty)}{\eta\in\Cs{\P,k}}_{\st}\\
&=\bigcup_{k\in A}\bigcup_{g\in\Trans{}{j}{k}}g\act\defset{\gamma_\eta(+\infty)}{\eta\in\Cs{\P,k}}_{\st}\\
&=\bigcup_{k\in A}\bigcup_{g\in\Trans{}{j}{k}}g\act I_{\P,k,\st}\,,
\end{align*}
and the union is disjoint since those in~\eqref{EQN:CPCPk} and~\eqref{EQN:CPdecomp} have been.
Hence, we obtain the second relation in~(\ref{BP:intervaldecomp}\ref{BP:intervaldecompGdecomp}).
Combining it with~\eqref{EQN:Isubset} also yields the first.
The definitions of the indices and transformations involved immediately imply~(\ref{BP:intervaldecomp}\ref{BP:intervaldecompGgeod}).
And for~(\ref{BP:intervaldecomp}\ref{BP:intervaldecompback}) we argue analogously by utilizing~$t_-(\nu)$ from~\eqref{ITEM:cex:nextinter}.
This completes the proof.

%----------------------------------------------------------------------------------------------------------------------------
\subsection*{Proof of Proposition~\ref{PROP:finram}}
%----------------------------------------------------------------------------------------------------------------------------

Proposition~\ref{PROP:finram} is essentially a corollary of the combination of the statements~\eqref{ITEM:cex:tessellation}--\eqref{ITEM:cex:nextinter}.

Consider the tessellation of~$\H$ by~$\mathscr{B}$. Statement~(\ref{ITEM:cex:tessellation}\ref{ITEM:tessellation:structure}) implies that
\[
\bigcup_{B\in\mathscr{B}}\partial B=\Gamma\act\base{\BrU_{\P}}\,.
\]
In particular, it follows that~$B^{\circ}\cap\Gamma\act\base{\BrU_{\P}}=\varnothing$ for all~$B\in\mathscr{B}$, where~$B^{\circ}$ denotes the inner points of~$B$.
Let~$j\in A$ and~$\nu\in\Cs{\P,j,\st}$. Further let~$t_+(\nu),\,k_+(\nu),\,g_+(\nu)$ be as in~\eqref{ITEM:cex:nextinter} resp. as in the proof of Theorem~\ref{THM:cuspexpSoB}. 
Then we find~$B\in\mathscr{B}$ such that~$\base{\Cs{\P,j}}\subseteq\partial B$ and~$B^{\circ}\subseteq\Plusspp{j}$.
From the above and~\eqref{ITEM:cex:uniqueness} we infer that the number~$t_+(\nu)>0$ is such that~$\gamma_\nu(t_+(\nu))\in\partial B$.
Since, by~(\ref{ITEM:cex:tessellation}\ref{ITEM:tessellation:structure}) and~\eqref{ITEM:cex:uniqueness}, every side of~$B$ is associated with exactly two branch copies~$g_1\act\Cs{\P,k_1},\,g_2\act\Cs{\P,k_2}$, and~$B$ is finitely sided, there are only finitely many possibilities for the pair~$(k_+(\nu),g_+(\nu))$.
This yields the assertion.

%----------------------------------------------------------------------------------------------------------------------------
\subsection*{Proof of Proposition~\ref{PROP:kundisfund}}
%----------------------------------------------------------------------------------------------------------------------------

The notions of geometrical finiteness, exactness, and convex polyhedra follow~\cite{Ratcliffe} (see also~\cite[Definition~6.1.31]{Pohl_diss}).

Because of~\eqref{ITEM:iso:boundradii} and~\eqref{ITEM:iso:locfin}, ~\cite[Corollary~3.20]{Pohl_isofunddom} shows that~$\kund$ is a fundamental region for~$\Gamma$.
Because of~\eqref{ITEM:iso:boundradii} and the convexity of the sets~$\overline{\mathrm{int\,}\I},~\I\in\Iso{\Gamma}$, it is also connected, thus a fundamental domain for~$\Gamma$.
Said convexity further implies convexity of~$\kund$ as well.
For~$g\in\Gamma_{\RELL}$ denote by~$b_g$ the geodesic segment from~\eqref{ITEM:iso:relpart}.
Because of~\eqref{ITEM:iso:biject} the mapping~$\Gamma_{\RELL}\ni g\mapsto b_g$ is a bijection.
From the definitions of~$\REL{\Gamma}$ and~$\kund$ we infer that
\begin{align}\label{EQN:partkund}
\partial\kund=\bigcup_{g\in\Gamma_{\RELL}}\left(b_g\cap\H\right)\,,
\end{align}
where the union is essentially disjoint, meaning that for~$g,h\in\Gamma_{\RELL},~g\ne h$, the segments~$b_g,b_h$ intersect each other in at most one point.
Hence,~\eqref{ITEM:iso:locfin} and~\eqref{ITEM:iso:relpart} induce an exact side-pairing on~$\kund$.
From~\eqref{EQN:partkund} we further infer that the set of sides of~$\kund$ in~$\overline{\H}^{\geo}$ is given by~$\defset{b_g\cap\H}{g\in\Gamma_{\RELL}}$.
Because of~\eqref{ITEM:iso:locfin} this is a locally finite set.
Hence,~$\kund$ is a convex fundamental polyhedron for~$\Gamma$.
Geometrical finiteness of~$\kund$ now follows from geometrical finiteness of~$\Gamma$ and \cite[Theorem~12.4.5]{Ratcliffe}.

%==================================================================

\bibliographystyle{amsplain}
\bibliography{pw_TObib}
\end{document}